\documentclass[reqno]{amsart}

\usepackage{amssymb}
\usepackage{amsmath}
\usepackage{amsfonts}
\usepackage[colorlinks]{hyperref}
\usepackage{mathtools}
\usepackage{graphicx}
\usepackage{mathptmx}
\usepackage{bm}
\usepackage{physics}
\usepackage{csquotes}
\usepackage[thinc]{esdiff}
\usepackage{enumerate}

\numberwithin{equation}{section}

\theoremstyle{definition}
\newtheorem{definition}{Definition}
\newtheorem{lemma}{Lemma}
\newtheorem{theorem}{Theorem}

\newtheorem{example}{Example}
\theoremstyle{remark}
\newtheorem{remark}{Remark}

\allowdisplaybreaks

\newcommand{\assumptionRef}[1]{\textrm{Assumption (\ref{#1})}}
\newcommand{\assumRef}[1]{\textrm{Assum.\,}\br{\textrm{\ref{#1}}}}
\newcommand{\axiomRef}[1]{\textrm{Axiom (\ref{#1})}}
\newcommand{\ball}[3]{K_{#1}\sbr{#2,#3}}
\newcommand{\Bint}[3]{\br{B}\!\displaystyle\int_{#1}^{#2}{#3}}
\newcommand{\bF}{\mathbf{F}}
\newcommand{\bLambda}{\boldsymbol{\Lambda}}
\newcommand{\br}[1]{\left(#1\right)}
\newcommand{\bu}{\mathbf{u}}
\newcommand{\bv}{\mathbf{v}}
\newcommand{\bvarphi}{\boldsymbol{\varphi}}

\newcommand{\bx}{\mathbf{x}}
\newcommand{\C}[2]{C\br{#1,#2}}
\newcommand{\cbr}[1]{\left\{#1\right\}}
\newcommand{\cl}[1]{\clOperator\br{#1}}
\newcommand{\conv}[1]{\convOperator\br{#1}}
\newcommand{\D}[1]{D^1#1}
\newcommand{\DC}[3]{{}^{C}\!D^{#1}_{#2}{#3}}
\newcommand{\DCp}[4]{\frac{{}^{C}\!\partial^{#1}_{#2}{#3}}{\partial{#4}^{#1}}}
\newcommand{\defRef}[1]{\textrm{Def.\,\ref{#1}}}
\newcommand{\definitionRef}[1]{\textrm{Definition \ref{#1}}}
\newcommand{\defeq}{\vcentcolon=}
\newcommand{\DRL}[3]{{}^{RL}\!D^{#1}_{#2}{#3}}
\newcommand{\ds}{\mathrm{d}s}
\newcommand{\dt}{\mathrm{d}t}
\newcommand{\dtau}{\mathrm{d}\tau}
\newcommand{\eps}{\varepsilon}
\newcommand{\eqdef}{=\vcentcolon}
\newcommand{\eqText}[1]{\stackrel{#1}{=}}
\newcommand{\exampleRef}[1]{\textrm{Example \ref{#1}}}
\newcommand{\fF}{\mathcal{F}}
\newcommand{\fM}{\mathcal{M}}
\newcommand{\fN}{\mathcal{N}}
\newcommand{\JRL}[3]{J^{#1}_{#2}{#3}}
\newcommand{\LOne}[2]{L^1\br{#1,#2}}
\newcommand{\lemmaRef}[1]{\textrm{Lemma \ref{#1}}}
\newcommand{\leqText}[1]{\stackrel{#1}{\leq}}
\newcommand{\Lint}[3]{\br{L}\!\displaystyle\int_{#1}^{#2}{#3}}
\newcommand{\lmRef}[1]{\textrm{Lm.\,\ref{#1}}}
\newcommand{\lsrbr}[1]{\left[#1\right)}
\newcommand{\N}{\mathbb{N}}
\newcommand{\R}{\mathbb{R}}
\newcommand{\remarkRef}[1]{\textrm{Remark \,\ref{#1}}}
\newcommand{\remRef}[1]{\textrm{Rem.\,\ref{#1}}}
\newcommand{\sbr}[1]{\left[#1\right]}
\newcommand{\sdelta}{\delta^{\star}}
\newcommand{\sE}{E^{\star}}
\newcommand{\sectionRef}[1]{\textrm{Section \ref{#1}}}
\newcommand{\sgn}[1]{\sgnOperator\br{#1}}
\newcommand{\sI}{I^{\star}}
\newcommand{\st}{\!:}
\newcommand{\su}{u^{\star}}
\newcommand{\subseteqText}[1]{\stackrel{#1}{\subseteq}}
\newcommand{\theoremRef}[1]{\textrm{Theorem \ref{#1}}}
\newcommand{\thmRef}[1]{\textrm{Thm.\,\ref{#1}}}
\newcommand{\Z}{\mathbb{Z}}

\DeclareMathOperator{\clOperator}{cl}
\DeclareMathOperator{\convOperator}{conv}
\DeclareMathOperator{\sgnOperator}{sgn}

\begin{document}
	\title[On the existence of solutions of fractional differential equations in Banach spaces]{On the existence of solutions of fractional differential equations in Banach spaces}
	\author[D. Oberta]{Dušan Oberta}
	\address{Institute of Mathematics, Faculty of Mechanical Engineering, Brno University of Technology, Technická 2, 616 69 Brno, Czech Republic}
	\email{Dusan.Oberta@vutbr.cz, oberta.du@gmail.com}
	
	\begin{abstract}
		Utilising the notion of measures of non-compactness and Kamke function of order $\alpha$, we address the question of solvability of fractional differential equations in Banach spaces. In particular, we provide sufficient conditions ensuring the existence of a local solution. Our main existence theorem is then applied on countable systems of fractional differential equations arising from semi-discretisation of fractional PDEs with $p$-Laplacian.
	\end{abstract}
	
	\subjclass[2010]{34G20, 34A08, 34A12, 35K92, 47H08, 65M06}
	\keywords{Fractional differential equations in Banach spaces, Measures of non-compactness, Kamke function, Semi-discrete fractional PDEs}
	\maketitle
	
	\section{Introduction}\label{sec_01}
	\setcounter{section}{1}
	\setcounter{equation}{0}
	Differential equations in Banach spaces have been extensively studied in last decades, e.g. in \cite{Deimling}, \cite{Ladas_Lakshmikantham}, \cite{Lakshmikantham_Leela}. Due to lack of compactness in infinite dimensional Banach spaces, classical results known for $\R^n$ are no longer valid. A special case of ODEs in Banach spaces are countable systems of classical ODEs, which can be viewed as ODEs in appropriate sequence spaces. These countable systems appear in many areas (such as stochastic processes or (semi-)discretisation of PDEs) and were studied e.g. in \cite{Banas_paper_ExistenceTheorem}, \cite{Banas_paper_SolvabilityOfInifiniteSystems}, \cite{Banas_book_sequences}, \cite{Banas_paper_ExistenceOfSolutions}, \cite{Han_Kloeden}, \cite{Samoilenko}, \cite{Slavik_multidimensional}, \cite{Slavik}.
	
	As in the case of $\R^n$, natural generalisation of ODEs in Banach spaces is in the form of fractional differential equations (FDEs) in Banach spaces. A simple example of application of the theory of FDEs in Banach spaces is a semi-discretisation of the anomalous diffusion equation (see e.g. \cite{Evangelista}, \cite{Meerschaert}, \cite{Metzler_Klafter}), which is of the form
	\begin{equation}\label{eq_01_01}
		\DCp{\alpha}{a}{u}{t}\br{t,x}=\diffp[2]{u}{x}\br{t,x},
	\end{equation}
	with the time fractional Caputo derivative of order $\alpha$ on the LHS. Moreover, such countable systems (i.e. countable systems of the same form as those arising from semi-discretisation of \eqref{eq_01_01}) are also studied directly, e.g. in \cite{Liu}.
	
	Basic results for classical FDEs can be found e.g. in \cite{Diethelm}, \cite{Kilbas}. However, one should note that the area of FDEs is still of a great interest nowadays. Even more relevant is the area of FDEs in Banach spaces, where some basic existence results have been proved only quite recently, and the theory is still largely incomplete. One of the goals of this paper is to fill in some of these gaps and to revise some of the existing results even for the integer order case. Some recent papers concerning the theory of FDEs in Banach spaces are \cite{Abbas}, \cite{Lakshmikantham_Devi}, \cite{Salem}, \cite{Zhou_Jiao_Pecaric}.
	
	Let $E$ be a real Banach space. In this paper we investigate the initial value problem
	\begin{subequations}\label{eq_01_02}
		\begin{align}
			\DC{\alpha}{a}{u\br{t}}&=f\br{t,u\br{t}},\quad{t}\in\sbr{a,b},\\
			u\br{a}&=u_0,
		\end{align}
	\end{subequations}
	where $\DC{\alpha}{a}{\br{\cdot}}$ denotes the Caputo fractional differential operator of order $0<\alpha<1$ and $f\st\sbr{a,b}\times{E}\rightarrow{E}$ is some appropriate function. We prove an existence theorem for \eqref{eq_01_02}, utilising the notion of a general measure of non-compactness and a general Kamke function of order $\alpha$ (newly introduced in \sectionRef{sec_02_07}). Our existence theorem is of a different type than the above-mentioned results (in the sense that it is not a subcase of any of the other results, nor contains any of these results as its subcase). 
	
	In \sectionRef{sec_02} we introduce a basic theory of fractional calculus in Banach spaces, measures of non-compactness, Kamke function of order $\alpha$ and prove some auxiliary results necessary for our main theorem. Moreover, we thoroughly establish the equivalence between solutions of the initial value problem \eqref{eq_01_02}, and solutions of the corresponding Volterra-type integral equation, since we were unable to find such a relation derived properly in the existing literature for the case of Banach spaces.
	
	In \sectionRef{sec_03} we prove our main existence theorem, which extends the known results for classical ODEs in Banach spaces to the case of FDEs in Banach spaces. These classical results can be found e.g. in \cite{Ambrosetti}, \cite{Banas_book}, \cite{Li}, \cite{Monch}. The proof of our main theorem follows the main ideas of the proof for the integer order case, mainly from \cite{Banas_book}. We support the proof with numerous details, which we were unable to find in the existing literature even for the integer order case. Some auxiliary results proved in \sectionRef{sec_02} are new in the fractional case and might be found useful in further development of the theory. Furthermore, in comparison to the integer order case from \cite{Banas_book}, we require an additional assumption, namely the measure of non-compactness $\mu$ having the so-called singleton property (notion we introduced in \sectionRef{sec_02_06}). This is further discussed in \sectionRef{sec_05}. At the end of \sectionRef{sec_03}, we prove also the basic existence and uniqueness theorem. It follows the very same arguments as the Picard theorem for classical ODEs in $\R^n$. However, since we feel the lack of explicit statement and proof of such a theorem in the existing literature, we provide the proof for completeness. Although, note that a similar result in the context of Hilfer fractional derivatives can be found in \cite{Mei_Peng_Gao}.
	
	In \sectionRef{sec_04} we provide a non-trivial example of a Kamke function of order $\alpha$, namely a superlinear function, which extends the case of a linear function usually found in the literature. Furthermore, note that this extension is not only in the context of superlinearity, but also in the context of Kamke function being dependent on the order $\alpha$. Finally, using our main theorem in the space $c_0$, we provide specific sufficient conditions for the existence of a local solution of \eqref{eq_01_02} in the space $c_0$. This result is then applied on a semi-discrete system obtained from a generalised version of \eqref{eq_01_01}, with the Laplacian on the RHS being replaced by a $p$-Laplacian and a non-linear perturbation being added to the RHS.
	
	Finally, in \sectionRef{sec_05} we discuss some possibilities regarding further development of the theory of FDEs in Banach spaces discussed in this paper.

	\section{Auxiliary Results}\label{sec_02}
	\setcounter{section}{2}
	\setcounter{equation}{0}
	Throughout this paper, let $E=\br{E,\norm{\cdot}_E}$ denote a real Banach space. The neutral element in $E$ will be denoted by $0_E$, or simply by $0$, if that is clear from the context. The closed ball centred at $x_0\in{E}$ with radius $r>0$ will be denoted by $\ball{E}{x_0}{r}$. Dual space of $E$, i.e. the space of all the bounded linear functionals on $E$, will be denoted by $\sE$.
	
	For $X\subseteq{E}$, let $\cl{X}$ denote the closure of $X$. Similarly, let $\conv{X}$ denote the convex hull of $X$. Recall that $\cl{\conv{X}}$, the closed convex hull of $X$, is the smallest closed and convex set containing $X$.
	
	Moreover, we define $\fM_E$ as the family of all the non-empty and bounded subsets of $E$. Similarly, we define $\fN_E$ as the family of all the non-empty and relatively compact subsets of $E$ (i.e. $\fN_E$ is the family of all the non-empty subsets of $E$ with compact closure).
	
	Finally, note that $\bx=\br{x_k}_{k=1}^{\infty}$ denotes a real sequence. Recall that the space $c_0\defeq\cbr{\bx\st\lim_{k\rightarrow\infty}x_k=0}$, equipped with the norm $\norm{\bx}_{c_0}\defeq\sup_{k=1,2,\dots}\abs{x_k}$ is a Banach space.
	
	\subsection{Abstract functions}
	An abstract function is a mapping $f\st\sbr{a,b}\rightarrow{E}$. For a family $X$ of abstract functions defined on $\sbr{a,b}$ and for $t\in\sbr{a,b}$, we define $X\br{t}\defeq\cbr{f\br{t}\st{f}\in{X}}\subseteq{E}$. Similarly, for some $Y\subseteq{Z}$ (where $Z$ is an arbitrary set) and a mapping $g\st{Z}\rightarrow{E}$, we define $g\br{Y}\defeq\cbr{g\br{x}\st{x}\in{Y}}\subseteq{E}$. We denote by $\C{\sbr{a,b}}{E}$ the space of all the continuous abstract functions $f\st\sbr{a,b}\rightarrow{E}$. Recall the well-known result that the space $\C{\sbr{a,b}}{E}$ equipped with the supremum norm $\norm{\cdot}_{\infty}$ is a Banach space.
	
	The derivative of an abstract function $f$ is meant in the strong sense, i.e. the derivative of $f$ at the point $t_0$, denoted by $\D{f}\br{t_0}$, is defined as the unique element of $E$ satisfying
	\begin{equation}\label{eq_02_01}
		\lim_{h\rightarrow0}\norm{\frac{f\br{t_0+h}-f\br{t_0}}{h}-\D{f}\br{t_0}}_E=0.
	\end{equation}
	
	\subsection{Integrals of abstract functions}
	For $f\in\C{\sbr{a,b}}{E}$, the abstract Riemann integral is defined in the same way as the classical Riemann integral. For more details, see e.g. Chapter $1.3$ in \cite{Ladas_Lakshmikantham}. Note that the abstract Riemann integral can be defined for a broader family of functions, but for our purpose, $\C{\sbr{a,b}}{E}$ is enough.
	
	Bochner integral is a generalisation of Lebesgue integral for abstract functions. Thorough construction of Bochner integral can be found in \cite{Mikusinski}. Note that for $E=\R$, Bochner and Lebesgue integral coincide.
	
	We denote by $\LOne{\sbr{a,b}}{E}$ the space of all the Bochner integrable functions $f\st\sbr{a,b}\rightarrow{E}$. Note that the space $\LOne{\sbr{a,b}}{\R}$ is the classical space of Lebesgue integrable functions. For $f\in\LOne{\sbr{a,b}}{E}$ and $g\in\LOne{\sbr{a,b}}{\R}$, we denote the Bochner and Lebesgue integral of $f$ and $g$, respectively, as
	\begin{equation*}
		\Bint{a}{b}{f\br{t}\dt}\quad\text{and}\quad\Lint{a}{b}{g\br{t}\dt}.
	\end{equation*}
	
	Moreover, one should keep in mind that for $f\in\C{\sbr{a,b}}{E}$, the abstract Riemann and Bochner integral coincide. Next, we provide some important properties of Bochner integral, crucial for our purpose.
	\begin{theorem}[\normalfont{\cite{Mikusinski}, Theorem $3.1$ in Chapter III}]\label{thm_01}
		Let $f\in\LOne{\sbr{a,b}}{E}$. Then it holds that $\norm{f\br{\cdot}}_E\in\LOne{\sbr{a,b}}{\R}$. Moreover
		\begin{equation*}
			\norm{\Bint{a}{b}{f\br{t}\dt}}_E\leq\Lint{a}{b}{\norm{f\br{t}}_E\,\dt}.
		\end{equation*}
	\end{theorem}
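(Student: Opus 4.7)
The plan is the classical three-step reduction to simple functions: verify the inequality directly for a simple function, use Bochner integrability to approximate a general $f$, then pass to the limit. I would treat the measurability/integrability of $\norm{f\br{\cdot}}_E$ and the norm inequality in parallel, as both fall out of the same approximation scheme.

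For a simple function $f=\sum_{i=1}^n x_i\chi_{A_i}$ with pairwise disjoint measurable $A_i\subseteq\sbr{a,b}$ of finite measure and $x_i\in E$, the Bochner integral is by definition $\sum_{i=1}^n x_i\,m\br{A_i}$, and the scalar function $\norm{f\br{\cdot}}_E=\sum_{i=1}^n\norm{x_i}_E\chi_{A_i}$ is a simple integrable function. The ordinary triangle inequality in $E$ then yields
\begin{equation*}
\norm{\Bint{a}{b}{f\br{t}\dt}}_E\leq\sum_{i=1}^n\norm{x_i}_E m\br{A_i}=\Lint{a}{b}{\norm{f\br{t}}_E\dt}
\end{equation*}
at once.

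For a general $f\in\LOne{\sbr{a,b}}{E}$, I would invoke the definition of Bochner integrability to obtain a sequence of simple functions $\br{f_n}$ with $\norm{f_n\br{t}-f\br{t}}_E\to 0$ almost everywhere and $\Lint{a}{b}{\norm{f_n\br{t}-f\br{t}}_E\dt}\to 0$. The reverse triangle inequality $\abs{\norm{f_n\br{t}}_E-\norm{f\br{t}}_E}\leq\norm{f_n\br{t}-f\br{t}}_E$ shows that $\norm{f\br{\cdot}}_E$ is the a.e.\ (and $L^1$-) limit of the measurable scalar functions $\norm{f_n\br{\cdot}}_E$, which gives both its measurability and its Lebesgue integrability. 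Applying the simple-function step to each $f_n$ yields
\begin{equation*}
\norm{\Bint{a}{b}{f_n\br{t}\dt}}_E\leq\Lint{a}{b}{\norm{f_n\br{t}}_E\dt},
\end{equation*}
and passing to the limit---using norm continuity together with the defining property $\Bint{a}{b}{f_n\br{t}\dt}\to\Bint{a}{b}{f\br{t}\dt}$ in $E$ on the left, and the $L^1$-convergence of norms on the right---produces the desired inequality.

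The only subtle point, though not a genuine obstacle, is knowing that the approximating simple functions can be chosen to deliver both pointwise a.e.\ convergence and $L^1$-convergence of the norms simultaneously; this is encoded directly into the standard definition of Bochner integrability (which is precisely why that definition is phrased as it is), so the three-step outline goes through without any further technical input.
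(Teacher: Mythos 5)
The paper offers no proof of this statement: it is imported verbatim from Mikusi\'nski's monograph (Theorem $3.1$ in Chapter III), so there is no internal argument to compare yours against. Taken on its own terms, your proof is the standard and correct one under the usual simple-function definition of Bochner integrability: the inequality is immediate from the triangle inequality for a simple function, and the approximating sequence $f_n\rightarrow f$ a.e.\ with $\Lint{a}{b}{\norm{f_n\br{t}-f\br{t}}_E\,\dt}\rightarrow0$ carries the measurability and integrability of $\norm{f\br{\cdot}}_E$ (via the reverse triangle inequality and $L^1$-completeness) and the norm inequality through the limit, using that $\Bint{a}{b}{f_n\br{t}\dt}\rightarrow\Bint{a}{b}{f\br{t}\dt}$ by the definition of the integral. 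The one caveat worth flagging is that Mikusi\'nski's book does not set up the Bochner integral via simple functions at all, but via absolutely convergent series of vector multiples of brick functions; your argument is therefore a proof relative to the equivalent standard definition rather than a reconstruction of the cited source's own derivation. Within that framework it is complete, the only unstated (but routine) point being that $\norm{f_n\br{\cdot}-f\br{\cdot}}_E$ is measurable, which follows from the strong measurability built into the definition.
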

	\begin{theorem}[\normalfont{\cite{Mikusinski}, Theorem $2.2$ in Chapter XIII}]\label{thm_02}
		Let $f\in\C{\sbr{a,b}}{E}$. Then for all $t\in\sbr{a,b}$ it holds that
		\begin{equation*}
			\D{\br{\Bint{a}{\br\cdot}{f\br{s}\ds}}}\br{t}=f\br{t}.
		\end{equation*}
	\end{theorem}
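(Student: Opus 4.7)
The plan is to prove this directly by analysing the difference quotient, mimicking the classical fundamental theorem of calculus but replacing absolute values by $E$-norms and Lebesgue estimates by the estimate from \thmRef{thm_01}.

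First, fix $t\in\sbr{a,b}$ and take $h\neq0$ small enough that $t+h\in\sbr{a,b}$. By additivity of the Bochner integral over adjacent intervals (a routine consequence of its construction) and the fact that a constant function $s\mapsto{f\br{t}}$ integrates to $h\,f\br{t}$, I would rewrite
\begin{equation*}
	\frac{1}{h}\sbr{\Bint{a}{t+h}{f\br{s}\ds}-\Bint{a}{t}{f\br{s}\ds}}-f\br{t}=\frac{1}{h}\Bint{t}{t+h}{\sbr{f\br{s}-f\br{t}}\ds}.
\end{equation*}
This isolates the error term and reduces the problem to showing that the right-hand side tends to $0_E$ in $E$ as $h\rightarrow0$.

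Second, I would apply \thmRef{thm_01} to the integrand $s\mapsto{f\br{s}-f\br{t}}$, which is continuous on the closed interval between $t$ and $t+h$ and therefore Bochner integrable there. This yields
\begin{equation*}
	\norm{\frac{1}{h}\Bint{t}{t+h}{\sbr{f\br{s}-f\br{t}}\ds}}_E\leq\frac{1}{\abs{h}}\Lint{t}{t+h}{\norm{f\br{s}-f\br{t}}_E\,\ds}\leq\sup_{s}\norm{f\br{s}-f\br{t}}_E,
\end{equation*}
where the supremum is taken over $s$ in the closed interval between $t$ and $t+h$. Since $f\in\C{\sbr{a,b}}{E}$, for every $\eps>0$ there exists $\delta>0$ such that $\norm{f\br{s}-f\br{t}}_E<\eps$ whenever $\abs{s-t}<\delta$, so the supremum is less than $\eps$ once $\abs{h}<\delta$. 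By definition \eqref{eq_02_01} of the strong derivative, this establishes the claim (at the endpoints $t=a$ or $t=b$, the same argument applies to the one-sided difference quotient).

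The main obstacle is not conceptual — the argument is essentially the scalar proof — but rather the verification of the two Bochner-integral properties invoked in the first step: additivity across adjacent subintervals, and the value of the integral of a constant $E$-valued function. Both follow directly from the construction of the Bochner integral via simple functions, but they need to be stated explicitly since we are in the abstract setting; in the continuous case they are also easily obtained by passing through the coinciding abstract Riemann integral, which provides a clean alternative route.
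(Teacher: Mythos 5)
Your proof is correct: the paper does not prove \theoremRef{thm_02} at all but imports it from Mikusi\'nski's book, and your argument is the standard fundamental-theorem-of-calculus computation, reducing the difference quotient to $\frac{1}{h}\Bint{t}{t+h}{\sbr{f\br{s}-f\br{t}}\ds}$ and killing it via \theoremRef{thm_01} together with continuity of $f$ at $t$. The only details worth making explicit are the ones you already flag (additivity over adjacent subintervals and the integral of a constant), plus the orientation convention for $h<0$, which your $\frac{1}{\abs{h}}$ estimate implicitly handles.
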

	\begin{theorem}[\normalfont{\cite{Mikusinski}, Theorem $2.3$ in Chapter XIII}]\label{thm_03}
		Let $f\st\sbr{a,b}\rightarrow{E}$ be continuously differentiable on $\sbr{a,b}$. Then it holds that
		\begin{equation*}
			\Bint{a}{b}{\D{f}\br{t}\dt}=f\br{b}-f\br{a}.
		\end{equation*}
	\end{theorem}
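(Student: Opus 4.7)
The plan is to reduce the claim to the scalar-valued fundamental theorem of calculus via a Hahn--Banach separation argument. Specifically, I would verify that the two sides of the identity have the same image under every bounded linear functional $\varphi\in\sE$, and then invoke the fact that $\sE$ separates points of $E$.

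First, fix $\varphi\in\sE$. By continuity and linearity of $\varphi$ combined with the definition of the strong derivative \eqref{eq_02_01}, the real-valued function $\varphi\circ{f}\st\sbr{a,b}\rightarrow\R$ is continuously differentiable on $\sbr{a,b}$ with $\br{\varphi\circ{f}}'\br{t}=\varphi\br{\D{f}\br{t}}$. The classical fundamental theorem of calculus applied to $\varphi\circ{f}$ then yields
\begin{equation*}
\Lint{a}{b}{\varphi\br{\D{f}\br{t}}\dt}=\varphi\br{f\br{b}}-\varphi\br{f\br{a}}.
\end{equation*}
Since $\D{f}\in\C{\sbr{a,b}}{E}$, it lies in $\LOne{\sbr{a,b}}{E}$, and the standard commutation of a bounded linear functional with the Bochner integral gives $\varphi\br{\Bint{a}{b}{\D{f}\br{t}\dt}}=\Lint{a}{b}{\varphi\br{\D{f}\br{t}}\dt}$. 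Chaining the two equalities produces $\varphi\br{\Bint{a}{b}{\D{f}\br{t}\dt}}=\varphi\br{f\br{b}-f\br{a}}$ for every $\varphi\in\sE$.

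Because $\sE$ separates points of $E$ by Hahn--Banach, this forces the identity $\Bint{a}{b}{\D{f}\br{t}\dt}=f\br{b}-f\br{a}$ in $E$. The main obstacle is the commutation of $\varphi$ with the Bochner integral: it is immediate on simple functions and extends to $\LOne{\sbr{a,b}}{E}$ through the defining approximation together with \thmRef{thm_01} applied to the difference of $\D{f}$ and an approximating sequence, so in practice it is either cited as a standard property or verified separately. A self-contained alternative avoiding this property is to set $F\br{t}\defeq\Bint{a}{t}{\D{f}\br{s}\ds}$, use \thmRef{thm_02} to obtain $\D{F}\br{t}=\D{f}\br{t}$ on $\sbr{a,b}$, and then deduce that $F-f$ is constant by applying the scalarisation via $\sE$ to the classical mean value theorem on $\varphi\circ\br{F-f}$, with $F\br{a}=0$ yielding the desired conclusion.
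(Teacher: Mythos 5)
Your argument is correct, but note that the paper does not prove this statement at all: it is imported verbatim from Mikusi\'nski's book (Theorem $2.3$ in Chapter XIII there), so there is no in-paper proof to match. What you give is the standard scalarisation proof: $\br{\varphi\circ f}'=\varphi\circ\D{f}$ follows from \eqref{eq_02_01} and the boundedness of $\varphi$, the classical fundamental theorem of calculus applies to $\varphi\circ f$, and Hahn--Banach separation of points closes the argument. The one step you rightly flag, $\varphi\br{\Bint{a}{b}{\D{f}\br{t}\dt}}=\Lint{a}{b}{\varphi\br{\D{f}\br{t}}\dt}$, deserves a cleaner justification than the simple-function approximation you sketch: since $\D{f}$ is continuous, the paper's own convention is that the Bochner integral coincides with the abstract Riemann integral, so the commutation is immediate from continuity and linearity of $\varphi$ applied to Riemann sums --- exactly the manipulation the paper itself carries out in \eqref{eq_02_17}--\eqref{eq_02_18} inside the proof of \lemmaRef{lm_01}. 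Your self-contained alternative via $F\br{t}=\Bint{a}{t}{\D{f}\br{s}\ds}$, \theoremRef{thm_02}, and the scalarised mean value theorem is also sound and has the advantage of staying entirely within results already stated in the paper. Either route is a valid replacement for the citation.
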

	
	\subsection{Fractional calculus for abstract functions}\label{sec_02_03}
	In this section, we briefly introduce the theory of fractional calculus in Banach spaces. The theory is analogous to the case of real-valued functions, which is thoroughly described in \cite{Diethelm}. We remind some basic results, which can be found e.g. in \cite{Anastassiou}, as well as prove some new ones, which are necessary for our purpose, but we were unable to find those in the existing literature.
	
	Recall that $E$ denotes a real Banach space. Furthermore, note that $\Gamma\br{\cdot}$ denotes the \textit{Gamma function}. Recall its well known property, namely that $\Gamma\br{x+1}=x\cdot\Gamma\br{x}$, for all $x>0$. 
	\begin{definition}[\normalfont{Riemann-Liouville fractional abstract integral operator of order $\alpha$}]\label{def_01}
		Let $\alpha>0$ and $f\in\LOne{\sbr{a,b}}{E}$. The \textit{Riemann-Liouville fractional abstract integral operator of order $\alpha$}, denoted by $\JRL{\alpha}{a}{\br{\cdot}}$, is defined as
		\begin{equation*}
			\JRL{\alpha}{a}{f}\br{t}\defeq\frac{1}{\Gamma\br{\alpha}}\Bint{a}{t}{\br{t-s}^{\alpha-1}f\br{s}\ds},
		\end{equation*}
		for $a\leq{t}\leq{b}$. Furthermore, we define $\JRL{0}{a}{f}\defeq{f}$ (i.e. the identity operator).
	\end{definition}
	\begin{remark}
		Note that for $\alpha=1$, $\JRL{\alpha}{a}{\br{\cdot}}$ coincides with the classical Bochner integral operator.
	\end{remark}
	\begin{remark}\label{rem_01}
		Note that in \definitionRef{def_01}, the value $\JRL{\alpha}{a}{f}\br{a}$ is meant as the limit, i.e. $\JRL{\alpha}{a}{f}\br{a}\defeq\lim_{t\rightarrow{a^+}}\JRL{\alpha}{a}{f}\br{t}$.
	\end{remark}
	\begin{theorem}[\normalfont{\cite{Anastassiou}, Theorem $2.3$}]
		Let $\alpha\geq0$ and $f\in\LOne{\sbr{a,b}}{E}$. Then $\JRL{\alpha}{a}{f}\br{t}$ exists for almost all $t\in\sbr{a,b}$. Moreover, $\JRL{\alpha}{a}{f}\in\LOne{\sbr{a,b}}{E}$.
	\end{theorem}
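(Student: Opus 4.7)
The plan is to reduce the claim to a Fubini-type theorem for Bochner integrals applied to the two-variable kernel $(t,s)\mapsto(t-s)^{\alpha-1}f(s)$. The case $\alpha=0$ is immediate from \definitionRef{def_01}, since then $\JRL{0}{a}{f}=f\in\LOne{\sbr{a,b}}{E}$ by assumption, so I would assume $\alpha>0$ throughout.

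First I would define $F\st\sbr{a,b}^2\rightarrow{E}$ by $F\br{t,s}\defeq\br{t-s}^{\alpha-1}f\br{s}$ for $a\leq{s}<t\leq{b}$ and $F\br{t,s}\defeq0$ otherwise. Since $f\in\LOne{\sbr{a,b}}{E}$, Pettis' measurability theorem yields that $f$ is strongly measurable, i.e.\ is the a.e.\ pointwise limit of simple functions $f_n$. The kernel $k\br{t,s}\defeq\br{t-s}^{\alpha-1}\mathbf{1}_{s<t}$ is Lebesgue measurable on $\sbr{a,b}^2$ as a real-valued function, hence each product $k\cdot f_n$ is a strongly measurable simple-valued function on the square, and the pointwise limit $F$ is strongly measurable on $\sbr{a,b}^2$.

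Next, I would estimate the norm integral via the classical Tonelli theorem applied to the scalar function $\norm{F\br{t,s}}_E=k\br{t,s}\norm{f\br{s}}_E$:
\begin{equation*}
    \Lint{a}{b}{\Lint{a}{b}{\norm{F\br{t,s}}_E\dt}\,\ds}=\Lint{a}{b}{\norm{f\br{s}}_E\br{\Lint{s}{b}{\br{t-s}^{\alpha-1}\dt}}\ds}\leq\frac{\br{b-a}^{\alpha}}{\alpha}\norm{f}_{\LOne{\sbr{a,b}}{E}}<\infty.
\end{equation*}
By \thmRef{thm_01} combined with strong measurability, $F$ is therefore Bochner integrable on $\sbr{a,b}^2$. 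The Fubini theorem for Bochner integrals then gives that for almost every $t\in\sbr{a,b}$ the slice $s\mapsto{F}\br{t,s}$ is Bochner integrable, which is precisely the assertion that $\JRL{\alpha}{a}{f}\br{t}$ exists for a.e.\ $t$, and moreover that the function $t\mapsto\JRL{\alpha}{a}{f}\br{t}$ lies in $\LOne{\sbr{a,b}}{E}$ with the bound $\norm{\JRL{\alpha}{a}{f}}_{\LOne{\sbr{a,b}}{E}}\leq\frac{\br{b-a}^{\alpha}}{\Gamma\br{\alpha+1}}\norm{f}_{\LOne{\sbr{a,b}}{E}}$.

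The main obstacle I anticipate is the strong measurability of $F$ on $\sbr{a,b}^2$: one cannot simply invoke scalar Fubini, and one must carefully combine the strong measurability of $f$ (via approximation by simple functions) with the scalar measurability of the singular kernel $k$ to produce simple approximations of $F$ on the product space. Once strong measurability is secured, the rest is routine application of \thmRef{thm_01} and Tonelli's theorem for the scalar norm.
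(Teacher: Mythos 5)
The paper does not prove this statement at all: it is imported verbatim as Theorem $2.3$ of \cite{Anastassiou}, so there is no in-paper argument to compare against. Judged on its own, your proof is the standard one and is essentially correct: reduce to $\alpha>0$, establish joint strong measurability of $F\br{t,s}=\br{t-s}^{\alpha-1}\mathbf{1}_{s<t}f\br{s}$, verify $\Lint{a}{b}{\Lint{a}{b}{\norm{F\br{t,s}}_E\,\dt}\ds}\leq\frac{\br{b-a}^{\alpha}}{\alpha}\norm{f}_{L^1}<\infty$ by Tonelli, and conclude via Fubini for Bochner integrals. Three small points deserve tightening. First, $k\cdot f_n$ is not \enquote{simple-valued}: for $f_n=\sum_i c_i\mathbf{1}_{A_i}$ the product is a finite sum of (scalar measurable function)$\cdot$(fixed vector), which is strongly measurable because it is weakly measurable with values in a finite-dimensional subspace; say that instead, and note that the a.e.\ pointwise convergence $k f_n\rightarrow F$ fails only on (null set in $s$)$\times\sbr{a,b}$ together with the diagonal, a null set of the square. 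Second, \theoremRef{thm_01} only gives the forward implication ($f$ Bochner integrable $\Rightarrow\norm{f}_E$ Lebesgue integrable); the step you actually need is the converse, Bochner's characterisation (strongly measurable plus norm-integrable implies Bochner integrable), which is a separate result you should cite from \cite{Mikusinski}. Third, the Fubini theorem for Bochner integrals is likewise not among the tools the paper lists, so it too needs an explicit reference. With those citations supplied, the argument is complete and even yields the quantitative bound $\norm{\JRL{\alpha}{a}{f}}_{\LOne{\sbr{a,b}}{E}}\leq\frac{\br{b-a}^{\alpha}}{\Gamma\br{\alpha+1}}\norm{f}_{\LOne{\sbr{a,b}}{E}}$, which is more than the cited statement asserts.
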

	\begin{theorem}[\normalfont{\cite{Anastassiou}, Remark $2.4$ and Theorem $2.5$}]\label{thm_05}
		Let $\alpha\geq0$ and $f\in\C{\sbr{a,b}}{E}$. Then $\JRL{\alpha}{a}{f}\in\C{\sbr{a,b}}{E}$.
	\end{theorem}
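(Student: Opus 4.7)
The plan is to verify continuity of $\JRL{\alpha}{a}{f}$ at each $t_0\in\sbr{a,b}$ by direct estimation, reducing all bounds to scalar Lebesgue integrals via \thmRef{thm_01}. The case $\alpha=0$ is immediate, so assume $\alpha>0$. Since $f\in\C{\sbr{a,b}}{E}$ is continuous on a compact domain, $M\defeq\sup_{s\in\sbr{a,b}}\norm{f\br{s}}_E<\infty$, and this uniform bound will dominate all integrands. For the left endpoint $t_0=a$, \remRef{rem_01} prescribes $\JRL{\alpha}{a}{f}\br{a}$ as a limit, and \thmRef{thm_01} with the uniform bound yields $\norm{\JRL{\alpha}{a}{f}\br{t}}_E\leq M\br{t-a}^{\alpha}/\Gamma\br{\alpha+1}\rightarrow0$, so $\JRL{\alpha}{a}{f}\br{a}=0$ and continuity at $a$ follows.

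For $t_0\in\br{a,b}$ I would check both one-sided limits separately; for $t_0=b$ only $t\rightarrow t_0^-$ is needed. Focusing on $t>t_0$ (the case $t<t_0$ is symmetric), a natural split gives
\begin{equation*}
\JRL{\alpha}{a}{f}\br{t}-\JRL{\alpha}{a}{f}\br{t_0}=\frac{1}{\Gamma\br{\alpha}}\Bint{a}{t_0}{\sbr{\br{t-s}^{\alpha-1}-\br{t_0-s}^{\alpha-1}}f\br{s}\ds}+\frac{1}{\Gamma\br{\alpha}}\Bint{t_0}{t}{\br{t-s}^{\alpha-1}f\br{s}\ds}.
\end{equation*}
Applying \thmRef{thm_01} and the bound $M$ reduces continuity to showing that the two scalar integrals $\Lint{a}{t_0}{\abs{\br{t_0-s}^{\alpha-1}-\br{t-s}^{\alpha-1}}\ds}$ and $\Lint{t_0}{t}{\br{t-s}^{\alpha-1}\ds}$ both vanish as $t\rightarrow t_0^+$; each admits an elementary antiderivative via the power rule.

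The main obstacle is the kernel singularity at $s=t$ when $0<\alpha<1$, which rules out naive pointwise bounds on $\abs{\br{t-s}^{\alpha-1}-\br{t_0-s}^{\alpha-1}}$. Fortunately, both one-sided power integrals on $\sbr{a,t}$ remain absolutely convergent, and monotonicity of $r\mapsto r^{\alpha-1}$ lets the absolute value be resolved: for $s<t_0<t$ and $\alpha<1$ one has $\br{t_0-s}^{\alpha-1}>\br{t-s}^{\alpha-1}$. Explicit integration then produces the combined bound $\alpha^{-1}\sbr{\br{t_0-a}^{\alpha}-\br{t-a}^{\alpha}+2\br{t-t_0}^{\alpha}}$, which visibly tends to $0$ as $t\rightarrow t_0^+$. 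The case $\alpha\geq1$ produces a non-singular integrand and is strictly easier.
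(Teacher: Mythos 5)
Your proof is correct, but note that the paper does not actually prove this statement: it is imported by citation from Anastassiou (Remark $2.4$ and Theorem $2.5$ there), so there is no in-paper proof to compare against. That said, your argument reproduces exactly the techniques the paper uses elsewhere. The splitting of $\JRL{\alpha}{a}{f}\br{t}-\JRL{\alpha}{a}{f}\br{t_0}$ into an integral of the kernel difference over $\sbr{a,t_0}$ plus a tail over $\sbr{t_0,t}$, followed by explicit evaluation of the power integrals to get the bound $\tfrac{1}{\alpha}\br{\br{t_0-a}^{\alpha}-\br{t-a}^{\alpha}+2\br{t-t_0}^{\alpha}}$, is precisely the computation carried out in the proof of \theoremRef{thm_14} when verifying the H\"older continuity of $\fF\br{u}$, and your treatment of the endpoint $t=a$ matches the limit arguments in \eqref{eq_02_11} and \eqref{eq_03_07}. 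The monotonicity observation that resolves the absolute value (decreasing kernel for $\alpha<1$, increasing for $\alpha>1$) is the right way to handle the singularity, and the case $\alpha\geq1$ is indeed easier. So your proposal is a sound, self-contained replacement for the cited result.
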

	\begin{theorem}[\normalfont{\cite{Anastassiou}, Theorem $2.6$}]\label{thm_06}
		Let $\alpha_1,\alpha_2\geq0$ and $f\in\LOne{\sbr{a,b}}{E}$. Then for almost all $t\in\sbr{a,b}$ it holds that
		\begin{equation}\label{eq_02_02}
			\JRL{\alpha_1}{a}{\JRL{\alpha_2}{a}{f}}\br{t}=\JRL{\alpha_2}{a}{\JRL{\alpha_1}{a}{f}}\br{t}=\JRL{\alpha_1+\alpha_2}{a}{f}\br{t}.
		\end{equation}
		Moreover, if $f\in\C{\sbr{a,b}}{E}$ or $\alpha_1+\alpha_2\geq1$, then the identity \eqref{eq_02_02} holds for all $t\in\sbr{a,b}$.
	\end{theorem}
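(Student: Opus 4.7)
The plan is to reduce to the case $\alpha_{1},\alpha_{2}>0$ (the cases where either index vanishes are immediate from $\JRL{0}{a}{\,\cdot\,}=\mathrm{id}$) and then prove the identity by swapping the order of integration. Writing out the definitions gives
\begin{equation*}
	\JRL{\alpha_{1}}{a}{\JRL{\alpha_{2}}{a}{f}}\br{t}=\frac{1}{\Gamma\br{\alpha_{1}}\Gamma\br{\alpha_{2}}}\Bint{a}{t}{\br{t-s}^{\alpha_{1}-1}\Bint{a}{s}{\br{s-\tau}^{\alpha_{2}-1}f\br{\tau}\dtau}\ds},
\end{equation*}
and the goal is to apply Fubini's theorem for Bochner integrals. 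To this end I would first verify the absolute integrability hypothesis, namely that the scalar double integral with $\norm{f\br{\tau}}_{E}$ inserted in place of $f\br{\tau}$ is finite for almost every $t\in\sbr{a,b}$. Exchanging the order using \thmRef{thm_01} reduces the question to finiteness of $\Lint{a}{t}{\norm{f\br{\tau}}_{E}\br{t-\tau}^{\alpha_{1}+\alpha_{2}-1}\dtau}$ (up to the Beta function factor computed next), which holds for a.e. $t$ since $f\in\LOne{\sbr{a,b}}{E}$ and the kernel $\br{t-\tau}^{\alpha_{1}+\alpha_{2}-1}$ is locally integrable.

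Once Fubini is justified, I would perform the substitution $s=\tau+\br{t-\tau}u$ in the inner integral to obtain
\begin{equation*}
	\Lint{\tau}{t}{\br{t-s}^{\alpha_{1}-1}\br{s-\tau}^{\alpha_{2}-1}\ds}=\br{t-\tau}^{\alpha_{1}+\alpha_{2}-1}\Lint{0}{1}{\br{1-u}^{\alpha_{1}-1}u^{\alpha_{2}-1}\mathrm{d}u}=\br{t-\tau}^{\alpha_{1}+\alpha_{2}-1}\frac{\Gamma\br{\alpha_{1}}\Gamma\br{\alpha_{2}}}{\Gamma\br{\alpha_{1}+\alpha_{2}}},
\end{equation*}
using the standard Beta--Gamma identity. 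Substituting back yields $\JRL{\alpha_{1}+\alpha_{2}}{a}{f}\br{t}$, and the commutativity is then just the symmetry $\alpha_{1}\leftrightarrow\alpha_{2}$. This proves the identity \eqref{eq_02_02} for almost every $t\in\sbr{a,b}$.

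To strengthen the conclusion to all $t\in\sbr{a,b}$ under the additional hypothesis $f\in\C{\sbr{a,b}}{E}$, I would appeal to \thmRef{thm_05}: both $\JRL{\alpha_{1}}{a}{\JRL{\alpha_{2}}{a}{f}}$ and $\JRL{\alpha_{1}+\alpha_{2}}{a}{f}$ are then continuous on $\sbr{a,b}$, so equality almost everywhere upgrades to equality pointwise. For the alternative hypothesis $\alpha_{1}+\alpha_{2}\geq 1$ with merely $f\in\LOne{\sbr{a,b}}{E}$, the kernel $\br{t-\tau}^{\alpha_{1}+\alpha_{2}-1}$ is bounded on $\sbr{a,b}$, whence a dominated-convergence argument based on \thmRef{thm_01} shows that $t\mapsto\JRL{\alpha_{1}+\alpha_{2}}{a}{f}\br{t}$ is continuous on $\sbr{a,b}$; the same reasoning applied inductively (using that $\JRL{\alpha_{2}}{a}{f}\in\LOne{\sbr{a,b}}{E}$ and then that $\alpha_{1}$ combines with the $\LOne{\,\cdot\,}{\,\cdot\,}$-regularity through the bounded-kernel estimate) yields continuity of the iterated integral as well, so that a.e. equality again promotes to everywhere.

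The main obstacle is the Fubini step: in the Banach-space setting one cannot invoke the classical scalar Fubini theorem directly, and one must justify the interchange using the Bochner analogue, which requires an a priori $L^{1}$-bound on $\tau\mapsto\norm{f\br{\tau}}_{E}\br{t-\tau}^{\alpha_{1}+\alpha_{2}-1}$ for the specific $t$ under consideration. Since this bound holds only for a.e. $t$ when $\alpha_{1}+\alpha_{2}<1$, the dichotomy in the statement (continuous $f$ versus $\alpha_{1}+\alpha_{2}\geq 1$) arises precisely at this point, and the two secondary arguments above are needed to recover pointwise validity.
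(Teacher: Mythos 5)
The paper does not prove this statement at all: Theorem \ref{thm_06} is imported verbatim from the literature (Theorem $2.6$ in the cited work of Anastassiou), so there is no in-paper argument to compare against. Your proposal is the standard and essentially correct proof --- Dirichlet's interchange of the iterated integrals justified by the Bochner--Fubini theorem, followed by the Beta--Gamma evaluation of the inner kernel integral, with the a.e.\ qualifier traced precisely to the a.e.\ finiteness of $\Lint{a}{t}{\norm{f\br{\tau}}_E\br{t-\tau}^{\alpha_1+\alpha_2-1}\dtau}$ when $\alpha_1+\alpha_2<1$. One small criticism: in the case $\alpha_1+\alpha_2\geq1$ your detour through continuity of the iterated integral $\JRL{\alpha_1}{a}{\JRL{\alpha_2}{a}{f}}$ is both unnecessary and shaky (with $f$ merely in $\LOne{\sbr{a,b}}{E}$ and $\alpha_1<1$, that iterated integral need not be continuous); the clean argument, which your closing paragraph already contains implicitly, is that the boundedness of the kernel $\br{t-\tau}^{\alpha_1+\alpha_2-1}$ makes the Fubini domination hold for \emph{every} $t$, so the interchange --- and hence the identity --- is valid pointwise without any continuity upgrade.
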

	\begin{remark}\label{rem_03}
		Further on in this paper, we always assume $0<\alpha<1$. Although, one should note that all the arguments, as well as the statement of our main theorem, remain valid also for the case of $\alpha=1$.
	\end{remark}
	\begin{definition}[\normalfont{Riemann-Liouville fractional abstract differential operator of order $\alpha$}]\label{def_02}
		The \textit{Riemann-Liouville fractional abstract differential operator of order $\alpha$}, denoted by $\DRL{\alpha}{a}{\br{\cdot}}$, is defined as
		\begin{equation}\label{eq_02_03}
			\DRL{\alpha}{a}{f}\defeq\D{\JRL{1-\alpha}{a}{f}},
		\end{equation}
		whenever the RHS of \eqref{eq_02_03} is well-defined. Furthermore, we define $\DRL{0}{a}{f}\defeq{f}$ (i.e. the identity operator).
	\end{definition}
	\begin{remark}
		Note that $\DRL{\alpha}{a}{\br{\cdot}}$ can be analogously defined also for $\alpha\geq1$. Moreover, note that for $\alpha=1$, such an operator would coincide with the classical differential operator as defined by \eqref{eq_02_01} (hence the notation $\D{\br{\cdot}}$). Even though the case $\alpha\geq1$ is not needed for our purpose, one should keep in mind \remarkRef{rem_03}.
	\end{remark}
	\begin{theorem}\label{thm_07}
		Let $f\in\C{\sbr{a,b}}{E}$. Then
		\begin{equation*}
			\DRL{1-\alpha}{a}{\DRL{\alpha}{a}{\JRL{1}{a}{f}}}=f.
		\end{equation*}
	\end{theorem}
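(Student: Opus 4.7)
The plan is to unwind the compositions from the inside out, repeatedly invoking the semigroup property of $\JRL{\cdot}{a}{\cdot}$ from \thmRef{thm_06} together with the abstract fundamental theorem of calculus from \thmRef{thm_02}.

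First, I would expand the inner operator using \defRef{def_02}:
\begin{equation*}
\DRL{\alpha}{a}{\JRL{1}{a}{f}} = \D{\JRL{1-\alpha}{a}{\JRL{1}{a}{f}}}.
\end{equation*}
Since $f\in\C{\sbr{a,b}}{E}$ and $(1-\alpha)+1 \geq 1$, \thmRef{thm_06} gives $\JRL{1-\alpha}{a}{\JRL{1}{a}{f}}=\JRL{2-\alpha}{a}{f}$ for every $t\in\sbr{a,b}$. Rewriting this as $\JRL{1}{a}{\JRL{1-\alpha}{a}{f}}$ via the commutativity part of \thmRef{thm_06}, we obtain a Bochner primitive of $\JRL{1-\alpha}{a}{f}$, which by \thmRef{thm_05} lies in $\C{\sbr{a,b}}{E}$. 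Hence \thmRef{thm_02} applies and yields
\begin{equation*}
\DRL{\alpha}{a}{\JRL{1}{a}{f}}(t) = \D{\br{\Bint{a}{\br\cdot}{\JRL{1-\alpha}{a}{f}(s)\ds}}}(t) = \JRL{1-\alpha}{a}{f}(t).
\end{equation*}

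Next, I would repeat the same idea one level up. By \defRef{def_02} and the previous step,
\begin{equation*}
\DRL{1-\alpha}{a}{\DRL{\alpha}{a}{\JRL{1}{a}{f}}} = \DRL{1-\alpha}{a}{\JRL{1-\alpha}{a}{f}} = \D{\JRL{\alpha}{a}{\JRL{1-\alpha}{a}{f}}}.
\end{equation*}
Again by \thmRef{thm_06} (with $\alpha+(1-\alpha)=1\geq 1$, or equivalently by continuity of $f$), we obtain $\JRL{\alpha}{a}{\JRL{1-\alpha}{a}{f}}=\JRL{1}{a}{f}$ for every $t\in\sbr{a,b}$. A final appeal to \thmRef{thm_02}, justified because $f\in\C{\sbr{a,b}}{E}$, gives $\D{\JRL{1}{a}{f}}=f$, completing the argument.

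There is essentially no hard step: the proof is pure bookkeeping with the semigroup property and the abstract FTC. The only subtle point to watch is that all applications of \thmRef{thm_06} hold \emph{pointwise} (not merely almost everywhere), so one must ensure that in each occurrence either the argument lies in $\C{\sbr{a,b}}{E}$ or the combined exponent is $\geq 1$; both conditions are satisfied throughout, which is why continuity of $f$ is indispensable for the strong-derivative formulation.
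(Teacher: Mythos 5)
Your proof is correct and follows essentially the same route as the paper: expand via \defRef{def_02}, commute the integral operators with \thmRef{thm_06}, apply the abstract fundamental theorem of calculus \thmRef{thm_02} (justified by \thmRef{thm_05}), and repeat. The only cosmetic difference is that you work strictly inside-out and explicitly record the intermediate identity $\DRL{\alpha}{a}{\JRL{1}{a}{f}}=\JRL{1-\alpha}{a}{f}$, whereas the paper expands both derivatives at once; the substance is identical.
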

	\begin{proof}
	It follows that
	\begin{equation*}
		\begin{aligned}
			\DRL{1-\alpha}{a}{\DRL{\alpha}{a}{\JRL{1}{a}{f}}}&\eqText{\defRef{def_02}}\D{\JRL{\alpha}{a}{\D{\JRL{1-\alpha}{a}{\JRL{1}{a}{f}}}}}\eqText{\thmRef{thm_06}}\D{\JRL{\alpha}{a}{\D{\JRL{1}{a}{\JRL{1-\alpha}{a}{f}}}}}\\
			&\eqText{\thmRef{thm_02}}\D{\JRL{\alpha}{a}{\JRL{1-\alpha}{a}{f}}}\eqText{\thmRef{thm_06}}\D{\JRL{1}{a}{f}}\eqText{\thmRef{thm_02}}f.
		\end{aligned}
	\end{equation*}
	Note that the former use of \theoremRef{thm_02} is justified due to \theoremRef{thm_05}.
	\end{proof}

	\begin{theorem}\label{thm_08}
		Let $f\in\C{\sbr{a,b}}{E}$. Then
		\begin{equation*}
			\DRL{\alpha}{a}{\JRL{\alpha}{a}{f}}=f.
		\end{equation*}
	\end{theorem}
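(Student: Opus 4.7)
The plan is to mirror the proof of \thmRef{thm_07}, only shorter: expand the definition of $\DRL{\alpha}{a}{\br{\cdot}}$, collapse the two consecutive Riemann--Liouville integrals via the semigroup property in \thmRef{thm_06}, and then finish with the fundamental-theorem-of-calculus-type statement in \thmRef{thm_02}.

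More concretely, by \defRef{def_02} we have
\begin{equation*}
    \DRL{\alpha}{a}{\JRL{\alpha}{a}{f}}=\D{\JRL{1-\alpha}{a}{\JRL{\alpha}{a}{f}}}.
\end{equation*}
Since $f\in\C{\sbr{a,b}}{E}$ and the exponents satisfy $(1-\alpha)+\alpha=1\geq1$, \thmRef{thm_06} applies pointwise on the whole interval $\sbr{a,b}$ and gives $\JRL{1-\alpha}{a}{\JRL{\alpha}{a}{f}}=\JRL{1}{a}{f}$. Finally, continuity of $f$ lets us invoke \thmRef{thm_02} to conclude $\D{\JRL{1}{a}{f}}=f$, which yields the desired identity.

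The only thing to keep an eye on is the justification for applying \thmRef{thm_02} and \thmRef{thm_06} pointwise rather than merely almost everywhere. For \thmRef{thm_06} this is handled either by the $\alpha_1+\alpha_2\geq1$ clause or, alternatively, by the continuity branch together with \thmRef{thm_05}. For \thmRef{thm_02} we simply need $f\in\C{\sbr{a,b}}{E}$, which is our standing hypothesis. I do not expect any genuine obstacle; the statement is an immediate corollary of the preceding three theorems and is essentially a cleaner version of the chain already displayed in the proof of \thmRef{thm_07}.
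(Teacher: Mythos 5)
Your argument coincides with the paper's own proof: both expand $\DRL{\alpha}{a}{\br{\cdot}}$ via \defRef{def_02}, collapse $\JRL{1-\alpha}{a}{\JRL{\alpha}{a}{f}}$ to $\JRL{1}{a}{f}$ by \thmRef{thm_06}, and conclude with \thmRef{thm_02}. Your extra remark on why the identities hold pointwise (the $\alpha_1+\alpha_2\geq1$ clause, or continuity plus \thmRef{thm_05}) is a welcome refinement but does not change the route.
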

	\begin{proof}
	It holds that
	\begin{equation*}
		\DRL{\alpha}{a}{\JRL{\alpha}{a}{f}}\eqText{\defRef{def_02}}\D{\JRL{1-\alpha}{a}{\JRL{\alpha}{a}{f}}}\eqText{\thmRef{thm_06}}\D{\JRL{1}{a}{f}}.
	\end{equation*}
	The claim then follows immediately from \theoremRef{thm_02}.
	\end{proof}

	\begin{definition}[\normalfont{Caputo fractional abstract differential operator of order $\alpha$}]\label{def_03}
		The \textit{Caputo fractional abstract differential operator of order $\alpha$}, denoted by $\DC{\alpha}{a}{\br{\cdot}}$, is defined as
		\begin{equation}\label{eq_02_04}
			\DC{\alpha}{a}{f}\defeq\DRL{\alpha}{a}{\br{f\br{\cdot}-f\br{a}}},
		\end{equation}
		whenever the RHS of \eqref{eq_02_04} is well-defined. Furthermore, we define $\DC{0}{a}{f}\defeq{f}$ (i.e. the identity operator).
	\end{definition}
	\begin{remark}
		It can be shown that under certain assumptions on $f$ it holds that
		\begin{equation}\label{eq_02_05}
			\DC{\alpha}{a}{f}=\JRL{1-\alpha}{a}{\D{f}}.
		\end{equation}
		However, \definitionRef{def_03} is more suitable for applications than \eqref{eq_02_05}, as it covers a broader family of functions.
	\end{remark}
	
	\subsection{Initial value problem for fractional differential equations}\label{sec_02_04}
	Let $u_0\in{E}$ and $f\st\sbr{a,b}\times{E}\rightarrow{E}$. Consider the initial value problem (IVP)
	\begin{subequations}\label{eq_02_06}
		\begin{align}
			\DC{\alpha}{a}{u}\br{t}&=f\br{t,u\br{t}},\quad{t}\in\sbr{a,b},\label{eq_02_06_01}\\
			u\br{a}&=u_0.\label{eq_02_06_02}
		\end{align}
	\end{subequations}
	By a solution of \eqref{eq_02_06}, we mean $u\in\C{\sbr{a,b}}{E}$ such that $\DC{\alpha}{a}{u}\in\C{\sbr{a,b}}{E}$, \eqref{eq_02_06_01} holds for all $t\in\sbr{a,b}$ and the initial condition \eqref{eq_02_06_02} is satisfied. In the next theorem we show that solving the IVP \eqref{eq_02_06} is under certain assumptions equivalent to solving the corresponding Volterra-type integral equation.
	\begin{theorem}\label{thm_09}
		Consider the IVP \eqref{eq_02_06} and let $f$ be continuous. Then $u\in\C{\sbr{a,b}}{E}$ is a solution of \eqref{eq_02_06} if and only if it satisfies for all $t\in\sbr{a,b}$ the following Volterra-type integral equation
		\begin{equation}\label{eq_02_07}
			u\br{t}=u_0+\JRL{\alpha}{a}{f\br{\cdot,u\br{\cdot}}}\br{t}.
		\end{equation}
	\end{theorem}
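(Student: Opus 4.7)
The plan is to prove the two implications separately, shuttling between the Caputo derivative, the Riemann--Liouville derivative, and the Riemann--Liouville integral via the composition rules (\thmRef{thm_06}, \thmRef{thm_08}) and the fundamental theorem of calculus for abstract functions (\thmRef{thm_02}, \thmRef{thm_03}). Throughout, the auxiliary function $g(t) \defeq f(t,u(t))$ will be continuous on $\sbr{a,b}$ by hypothesis, and $v \defeq u - u_0 \in \C{\sbr{a,b}}{E}$ will satisfy $v(a) = 0$ whenever the initial condition holds.

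For the forward direction, suppose $u$ solves the IVP \eqref{eq_02_06}. By \defRef{def_03}, the equation becomes $\D{\JRL{1-\alpha}{a}{v}} = g$. First I would show that $\JRL{1-\alpha}{a}{v}(a) = 0$: since $v$ is continuous and hence bounded, the estimate $\norm{\JRL{1-\alpha}{a}{v}(t)}_E \leq \frac{\norm{v}_{\infty}}{\Gamma\br{1-\alpha}} \cdot \frac{(t-a)^{1-\alpha}}{1-\alpha}$ tends to $0$ as $t \to a^+$, which together with \remRef{rem_01} yields the claim. Because $g$ is continuous, \thmRef{thm_03} then lets me integrate $\D{\JRL{1-\alpha}{a}{v}} = g$ over $\sbr{a,t}$ to obtain $\JRL{1-\alpha}{a}{v} = \JRL{1}{a}{g}$ on $\sbr{a,b}$. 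Applying $\JRL{\alpha}{a}{\br{\cdot}}$ to both sides and invoking the ``for all $t$'' version of \thmRef{thm_06} (valid since $\alpha + (1-\alpha) = 1 \geq 1$ and the functions involved are continuous) gives $\JRL{1}{a}{v} = \JRL{1}{a}{\JRL{\alpha}{a}{g}}$. Both integrands are continuous (the second by \thmRef{thm_05}), so \thmRef{thm_02} permits differentiation and produces $v = \JRL{\alpha}{a}{g}$, which is precisely \eqref{eq_02_07}.

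Conversely, assume $u \in \C{\sbr{a,b}}{E}$ satisfies \eqref{eq_02_07}. An analogous vanishing estimate $\norm{\JRL{\alpha}{a}{g}(t)}_E \leq \frac{\norm{g}_{\infty}}{\Gamma\br{\alpha}} \cdot \frac{(t-a)^{\alpha}}{\alpha} \to 0$ combined with \remRef{rem_01} gives $u(a) = u_0$, hence $u - u_0 = \JRL{\alpha}{a}{g}$. Applying $\DRL{\alpha}{a}{\br{\cdot}}$ and combining \defRef{def_03} with the inversion identity \thmRef{thm_08} then yields $\DC{\alpha}{a}{u} = \DRL{\alpha}{a}{\JRL{\alpha}{a}{g}} = g \in \C{\sbr{a,b}}{E}$, establishing both the differential equation and the required regularity of $\DC{\alpha}{a}{u}$. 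The main technical subtlety I anticipate lies in the forward direction, in verifying $\JRL{1-\alpha}{a}{v}(a) = 0$: this is the ``boundary term'' that distinguishes the Caputo from the Riemann--Liouville setting, and the entire argument rests on the subtraction of $u_0$ (built into the very definition of $\DC{\alpha}{a}{\br{\cdot}}$) killing this term so that $\JRL{\alpha}{a}{\br{\cdot}}$ acts as a genuine inverse of $\DRL{\alpha}{a}{\br{\cdot}}$ when applied to $v$.
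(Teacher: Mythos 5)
Your proof is correct and follows essentially the same route as the paper's: rewrite the equation as $\D{\JRL{1-\alpha}{a}{(u-u_0)}}=g$, integrate via \theoremRef{thm_03}, kill the boundary term $\JRL{1-\alpha}{a}{(u-u_0)}\br{a}$ by the singular-kernel estimate, invert using the semigroup property, and use \theoremRef{thm_08} for the converse. The only (cosmetic) difference is that in the forward direction you apply $\JRL{\alpha}{a}{\br{\cdot}}$ and differentiate once via \theoremRef{thm_02} where the paper invokes the packaged identity of \theoremRef{thm_07}; these unwind to the same manipulations, and your ordering in the converse (establishing $u\br{a}=u_0$ before identifying $u-u_0$ with $u-u\br{a}$) is if anything slightly cleaner.
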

	\begin{proof}
	\enquote{$\implies$} Let $u\in\C{\sbr{a,b}}{E}$ be a solution of \eqref{eq_02_06}. Define $z\st\sbr{a,b}\rightarrow{E}$ as
	\begin{equation}\label{eq_02_08}
		z\br{t}\defeq{f}\br{t,u\br{t}},\quad{t}\in\sbr{a,b}.
	\end{equation}
	Obviously, $z\in\C{\sbr{a,b}}{E}$, since both $f$ and $u$ are continuous. Let $t\in\sbr{a,b}$ be arbitrary. Substituting \eqref{eq_02_08} into \eqref{eq_02_06_01} and using \eqref{eq_02_06_02}, we obtain
	\begin{equation}\label{eq_02_09}
		z\br{t}=\DC{\alpha}{a}{u}\br{t}\eqText{\defRef{def_03}}\DRL{\alpha}{a}{\br{u\br{\cdot}-u_0}}\br{t}\eqText{\defRef{def_02}}\D{\JRL{1-\alpha}{a}{\br{u\br{\cdot}-u_0}}}\br{t}.
	\end{equation}
	
	Now, we apply the $\JRL{1}{a}{\br{\cdot}}$ operator on both sides of \eqref{eq_02_09}. Since $z\br{\cdot}$ is continuous, we may use \theoremRef{thm_03} to obtain
	\begin{equation}\label{eq_02_10}
		\begin{aligned}
			\JRL{1}{a}{z}\br{t}&=\JRL{1}{a}{\D{\JRL{1-\alpha}{a}{\br{u\br{\cdot}-u_0}}}}\br{t}\eqText{\thmRef{thm_03}}\JRL{1-\alpha}{a}{\br{u\br{\cdot}-u_0}}\br{t}-\JRL{1-\alpha}{a}{\br{u\br{\cdot}-u_0}}\br{a}\\
			&=\JRL{1-\alpha}{a}{\br{u\br{\cdot}-u_0}}\br{t},
		\end{aligned}
	\end{equation}
	where the last equality follows from
	\begin{equation}\label{eq_02_11}
		\begin{aligned}
			\Gamma\br{1-\alpha}&\cdot\norm{\JRL{1-\alpha}{a}{\br{u\br{\cdot}-u_0}}\br{a}}_E\eqText{\remRef{rem_01}}\norm{\lim_{t\rightarrow{a}^+}\Bint{a}{t}{\br{t-s}^{-\alpha}\cdot\br{u\br{s}-u_0}\ds}}_E\\
			&\leqText{\thmRef{thm_01}}\max_{a\leq{s}\leq{b}}\norm{u\br{s}-u_0}_E\cdot\lim_{t\rightarrow{a}^+}\Lint{a}{t}{\br{t-s}^{-\alpha}\ds}=0.
		\end{aligned}
	\end{equation}
	
	Finally, since $u$ is continuous, applying \theoremRef{thm_07} on $\br{u\br{\cdot}-u_0}\br{t}$ yields
	\begin{equation*}
		\begin{aligned}
			u\br{t}-u_0&\eqText{\thmRef{thm_07}}\DRL{1-\alpha}{a}{\DRL{\alpha}{a}{\JRL{1}{a}{\br{u\br{\cdot}-u_0}}}}\br{t}\\
			&\eqText{\thmRef{thm_06}}\DRL{1-\alpha}{a}{\DRL{\alpha}{a}{\JRL{\alpha}{a}{\JRL{1-\alpha}{a}{\br{u\br{\cdot}-u_0}}}}}\br{t}\eqText{\eqref{eq_02_10}}\DRL{1-\alpha}{a}{\DRL{\alpha}{a}{\JRL{\alpha}{a}{\JRL{1}{a}{z}}}}\br{t}\\
			&\eqText{\thmRef{thm_06}}\DRL{1-\alpha}{a}{\DRL{\alpha}{a}{\JRL{1}{a}{\JRL{\alpha}{a}{z}}}}\br{t}\eqText{\thmRef{thm_05}\,\&\,\thmRef{thm_07}}\JRL{\alpha}{a}{z}\br{t}.
		\end{aligned}
	\end{equation*}
	Keeping in mind the definition \eqref{eq_02_08}, \eqref{eq_02_07} follows immediately.
	
	\enquote{$\impliedby$} Let $u\in\C{\sbr{a,b}}{E}$ satisfy \eqref{eq_02_07} for all $t\in\sbr{a,b}$. Let $t\in\sbr{a,b}$ be arbitrary. Since $f\br{\cdot,u\br{\cdot}}$ is continuous, applying \theoremRef{thm_08} yields
	\begin{equation*}
		f\br{t,u\br{t}}\eqText{\thmRef{thm_08}}\DRL{\alpha}{a}{\JRL{\alpha}{a}{f\br{\cdot,u\br{\cdot}}}}\br{t}\eqText{\eqref{eq_02_07}}\DRL{\alpha}{a}{\br{u\br{\cdot}-u\br{a}}}\br{t}\eqText{\defRef{def_03}}\DC{\alpha}{a}{u}\br{t}.
	\end{equation*}
	Thus, $u\br{\cdot}$ satisfies \eqref{eq_02_06_01} for all $t\in\sbr{a,b}$. In the view of continuity of $f\br{\cdot,u\br{\cdot}}$, this also means that $\DC{\alpha}{a}{u}\in\C{\sbr{a,b}}{E}$. Furthermore, by the same arguments as in \eqref{eq_02_11}, one can easily verify that $u\br{\cdot}$ satisfies \eqref{eq_02_06_02} as well.
	\end{proof}
	
	\subsection{Basic results from functional analysis}
	Now, we remind some standard results from functional analysis, as well as prove a new one, which is needed for our purpose. Note that the third theorem is one of the many consequences of the Hahn-Banach theorem.
	\begin{theorem}[\normalfont{Arzelà-Ascoli theorem}]\label{thm_10}
		Let $X\subseteq\C{\sbr{a,b}}{\R}$. Then $X$ is relatively compact in $\C{\sbr{a,b}}{\R}$ if and only if $X$ is uniformly bounded and uniformly equicontinuous on $\sbr{a,b}$.
	\end{theorem}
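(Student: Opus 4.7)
The plan is to prove the two implications of this classical characterisation separately, keeping in mind that $\C{\sbr{a,b}}{\R}$ is a complete metric space under $\norm{\cdot}_\infty$, so relative compactness is equivalent to total boundedness (and also to sequential relative compactness).

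For the forward direction, suppose $X$ is relatively compact. Uniform boundedness is immediate since the closure $\cl{X}$ is compact, hence bounded in $\norm{\cdot}_\infty$, so there is a constant $M$ with $\abs{f(t)} \leq M$ for all $f \in X$ and all $t \in \sbr{a,b}$. For uniform equicontinuity, fix $\eps > 0$ and use total boundedness to choose a finite $\eps/3$-net $\cbr{f_1, \ldots, f_n} \subseteq \C{\sbr{a,b}}{\R}$ for $X$. Each $f_i$ is continuous on the compact interval $\sbr{a,b}$, hence uniformly continuous; taking the minimum of the corresponding moduli gives a single $\delta > 0$ with $\abs{f_i(t) - f_i(s)} < \eps/3$ whenever $\abs{t-s} < \delta$, for every $i$. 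For an arbitrary $f \in X$, pick $f_i$ with $\norm{f - f_i}_\infty < \eps/3$ and apply the triangle inequality to obtain $\abs{f(t) - f(s)} < \eps$.

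For the reverse direction, assume $X$ is uniformly bounded and uniformly equicontinuous. I would show that any sequence $(f_n) \subseteq X$ has a uniformly convergent subsequence, which by completeness of $\C{\sbr{a,b}}{\R}$ gives relative compactness. Enumerate a countable dense set $\cbr{q_k}_{k=1}^\infty \subseteq \sbr{a,b}$ (e.g.\ the rationals). By uniform boundedness, the sequence $(f_n(q_1))$ is bounded in $\R$, so a standard diagonal extraction produces a subsequence $(f_{n_j})$ converging at every $q_k$. Then, given $\eps > 0$, pick $\delta$ from uniform equicontinuity and cover $\sbr{a,b}$ by finitely many intervals of length less than $\delta$, each containing some $q_{k_\ell}$; pointwise convergence at these finitely many $q_{k_\ell}$ combined with equicontinuity yields $\norm{f_{n_j} - f_{n_{j'}}}_\infty < 3\eps$ for large $j, j'$, so $(f_{n_j})$ is Cauchy and hence uniformly convergent.

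The principal technical difficulty is the second direction: coordinating the pointwise-convergent subsequence obtained by the diagonal argument with the equicontinuity modulus in order to upgrade to uniform Cauchyness. The finite cover of $\sbr{a,b}$ by $\delta$-intervals containing representative rationals is what bridges the gap, and requires care in quantifier order (the cover is chosen after $\eps$, but before extracting the Cauchy index) — otherwise the proof is a routine assembly of Bolzano–Weierstrass, the diagonal trick, and a triangle inequality.
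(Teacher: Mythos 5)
Your proof is the standard and correct argument for the Arzelà--Ascoli theorem: the $\eps/3$-net argument for the forward direction and the diagonal extraction over a countable dense set, upgraded to uniform Cauchyness via equicontinuity, for the reverse. The paper states this result as a classical fact without proof, so there is nothing to compare against; your write-up fills that gap correctly.
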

	\begin{theorem}[\normalfont{Schauder fixed point theorem}]\label{thm_11}
		Let $X\subseteq{E}$ be non-empty, closed, convex and bounded. Let $\fF\st{X}\rightarrow{X}$ be a continuous mapping such that $\fF\br{X}$ is relatively compact in $E$. Then there exists a fixed point of $\fF$ in $X$.
	\end{theorem}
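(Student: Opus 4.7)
The plan is to reduce Schauder's theorem to Brouwer's fixed point theorem via finite-dimensional approximations, exploiting the relative compactness of $\fF\br{X}$. The idea is that even though $E$ may be infinite-dimensional, the image set lives in a compact set that can be approximated arbitrarily well by finite $\eps$-nets, and on the convex hulls of these nets we can mimic $\fF$ by a map to which Brouwer's theorem applies.

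First, I would fix $n\in\N$ and, using compactness of $\cl{\fF\br{X}}$, extract a finite $\frac{1}{n}$-net $\cbr{y_1^n,\dots,y_{k_n}^n}\subseteq\fF\br{X}$. I would then construct the \emph{Schauder projection} $P_n\st\cl{\fF\br{X}}\rightarrow\conv{\cbr{y_1^n,\dots,y_{k_n}^n}}$ by
\begin{equation*}
	P_n\br{y}\defeq\frac{\sum_{i=1}^{k_n}m_i^n\br{y}\,y_i^n}{\sum_{i=1}^{k_n}m_i^n\br{y}},\quad m_i^n\br{y}\defeq\max\cbr{0,\tfrac{1}{n}-\norm{y-y_i^n}_E},
\end{equation*}
and verify three properties: (i) $P_n$ is continuous; (ii) $\norm{P_n\br{y}-y}_E<\frac{1}{n}$ for every $y\in\cl{\fF\br{X}}$ (since only those $y_i^n$ within $\frac{1}{n}$ of $y$ contribute, and $P_n\br{y}$ is a convex combination of them); (iii) the denominator never vanishes because the $y_i^n$ form a $\frac{1}{n}$-net.

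Next I would set $X_n\defeq\cl{\conv{\cbr{y_1^n,\dots,y_{k_n}^n}}}$, which is a compact convex subset of the finite-dimensional subspace $\mathrm{span}\cbr{y_1^n,\dots,y_{k_n}^n}$ and, crucially, is contained in $X$ since $X$ itself is closed, convex, and contains each $y_i^n\in\fF\br{X}\subseteq{X}$. The composite $\fF_n\defeq{P_n}\circ\fF|_{X_n}\st{X_n}\rightarrow{X_n}$ is continuous, so by the Brouwer fixed point theorem (applied in the homeomorphic image of a closed ball of the appropriate dimension) there exists $x_n\in{X_n}$ with $\fF_n\br{x_n}=x_n$, i.e. $P_n\br{\fF\br{x_n}}=x_n$.

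Finally I would pass to the limit. Since $\fF\br{x_n}\in\fF\br{X}$ and $\cl{\fF\br{X}}$ is compact, a subsequence $\fF\br{x_{n_j}}$ converges to some $x^\star\in{X}$ (using that $X$ is closed). Property (ii) gives $\norm{x_{n_j}-\fF\br{x_{n_j}}}_E=\norm{P_{n_j}\br{\fF\br{x_{n_j}}}-\fF\br{x_{n_j}}}_E<\frac{1}{n_j}$, so $x_{n_j}\rightarrow{x^\star}$ as well, and continuity of $\fF$ yields $\fF\br{x^\star}=x^\star$. The main obstacle I anticipate is the technical verification that the Schauder projection satisfies property (ii) uniformly and that $X_n$ genuinely lies inside $X$ — both follow from convexity arguments but require a careful accounting of the support of the $m_i^n$; the rest of the argument is a relatively clean compactness-and-continuity passage to the limit, assuming Brouwer's theorem as a finite-dimensional black box.
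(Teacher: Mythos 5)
The paper does not prove this statement at all: it is quoted as a classical result from functional analysis (alongside Arzel\`a--Ascoli and Hahn--Banach) and used as a black box in the proof of Theorem \ref{thm_14}, so there is no in-paper argument to compare against. Your proposal is the standard Schauder-projection proof, and it is correct: the construction of $P_n$, the estimate $\norm{P_n\br{y}-y}_E<\tfrac{1}{n}$ via the observation that only net points within $\tfrac{1}{n}$ of $y$ carry positive weight, the inclusion $X_n\subseteq X$ by convexity and closedness of $X$, the application of Brouwer on the compact convex set $X_n$ in a finite-dimensional subspace, and the final compactness-plus-continuity passage to the limit are all sound. The only point worth tightening is the domain of $P_n$: if $\cbr{y_1^n,\dots,y_{k_n}^n}$ is a $\tfrac{1}{n}$-net of $\fF\br{X}$ in the strict sense, a point of $\cl{\fF\br{X}}$ could lie at distance exactly $\tfrac{1}{n}$ from every net point, making all $m_i^n$ vanish; this is harmlessly repaired either by taking a $\tfrac{1}{2n}$-net or by noting that you only ever evaluate $P_n$ at points $\fF\br{x_n}\in\fF\br{X}$, so the closure is never actually needed as the domain. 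As written, the argument would be a perfectly acceptable self-contained proof of the quoted theorem, modulo that one-line fix.
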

	\begin{theorem}[\normalfont{Hahn-Banach theorem, \cite{Kesavan}, Theorem $2.5$}]\label{thm_12}
		Let $X,Y\subseteq{E}$ be non-empty and disjoint convex sets. Assume that $X$ is closed and $Y$ is compact. Then $X$ and $Y$ can be strictly separated by a hyperplane, i.e.
		\begin{equation*}
			\exists{f}\in\sE\,\exists{\alpha}\in\R\,\exists\eps>0\st{f}\br{x}\leq\alpha-\eps\textnormal{ and }f\br{y}\geq\alpha+\eps,\quad\forall{x}\in{X},\forall{y}\in{Y}.
		\end{equation*}
	\end{theorem}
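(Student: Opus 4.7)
The plan is to reduce the strict separation statement to the standard first geometric form of Hahn--Banach (separation of a non-empty convex open set from a disjoint non-empty convex set by a continuous linear functional), which itself follows from the analytic Hahn--Banach theorem via a Minkowski gauge. The reduction is performed by translating the problem so that one separates the origin from the Minkowski difference $A\defeq{Y}-X=\cbr{y-x\st{y}\in{Y},x\in{X}}$.

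First I would verify three properties of $A$. Convexity is immediate, since $A$ is the image of the convex set $Y\times{X}$ under the continuous linear map $\br{y,x}\mapsto{y}-x$. Disjointness of $X$ and $Y$ gives $0\notin{A}$. The crucial topological fact is that $A$ is closed: given $y_n-x_n\to{z}$, compactness of $Y$ furnishes a subsequence $y_{n_k}\to{y}\in{Y}$, so $x_{n_k}=y_{n_k}-\br{y_{n_k}-x_{n_k}}\to{y}-z$, and closedness of $X$ forces $y-z\in{X}$, hence $z=y-\br{y-z}\in{A}$. Since $A$ is closed and $0\notin{A}$, an open ball $B$ of radius $r>0$ around $0$ is disjoint from $A$. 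Applying the first geometric Hahn--Banach theorem to the non-empty, disjoint, convex sets $B$ (open) and $A$ produces a non-zero $f\in\sE$ and $\beta\in\R$ with $f\br{w}<\beta\leq{f}\br{z}$ for every $w\in{B}$ and $z\in{A}$. Taking the supremum of $f$ over $B$ gives $r\norm{f}_{\sE}\leq\beta$, so in particular $\beta>0$.

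Translating back, every pair $x\in{X},y\in{Y}$ satisfies $f\br{y}-f\br{x}=f\br{y-x}\geq\beta$. Fixing $y_0\in{Y}$ bounds $\sup_{x\in{X}}f\br{x}\leq{f}\br{y_0}-\beta<\infty$, so $\gamma_1\defeq\sup_{x\in{X}}f\br{x}$ is finite; the same inequality forces $\gamma_2\defeq\inf_{y\in{Y}}f\br{y}$ to satisfy $\gamma_2\geq\gamma_1+\beta$. Setting $\alpha\defeq\br{\gamma_1+\gamma_2}/2$ and $\eps\defeq\br{\gamma_2-\gamma_1}/2\geq\beta/2>0$ delivers the required $f\br{x}\leq\alpha-\eps$ for $x\in{X}$ and $f\br{y}\geq\alpha+\eps$ for $y\in{Y}$.

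The step I expect to be the main obstacle is the closedness of $A=Y-X$: in infinite dimensions the algebraic sum of two closed sets need not be closed, and it is precisely the compactness of $Y$ (rather than its closedness) that makes the sequential argument work, explaining the asymmetric hypotheses on $X$ and $Y$. Everything downstream is routine: an appeal to the first geometric Hahn--Banach theorem, together with the standard identity $\sup_{\norm{w}<r}f\br{w}=r\norm{f}_{\sE}$, and a final averaging of the two one-sided bounds into a symmetric strict inequality.
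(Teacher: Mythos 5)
The paper does not prove this statement: it is quoted verbatim as a known result (\cite{Kesavan}, Theorem $2.5$) and used as a black box in the proof of \lemmaRef{lm_01}, so there is no in-paper argument to compare against. Your proof is the standard derivation of the second geometric form of Hahn--Banach and is correct; in particular you correctly identify and handle the one non-routine point, namely that $Y-X$ is closed because $Y$ is compact (not merely closed), which is exactly why the hypotheses on $X$ and $Y$ are asymmetric.
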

	\begin{remark}
		The next lemma can be viewed as a generalisation of the mean value theorem (which, as it is well-known, does not hold in the same form in a general Banach space as it does in $\R$) for Banach spaces. Without the singular kernel, it is a known result, and it can be found in differential form e.g. as Proposition $2.1$ in \cite{Deimling}. However, as we were not able to find such a result with singular kernel in the existing literature, we provide a detailed proof ourselves.
	\end{remark}
	\begin{lemma}\label{lm_01}
		Let $f\in\C{\sbr{a,b}}{E}$. Then for all $t\in\sbr{a,b}$ it holds that
		\begin{equation}\label{eq_02_12}
			\frac{\Bint{a}{t}{\br{t-s}^{\alpha-1}f\br{s}\ds}}{\Lint{a}{t}{\br{t-s}^{\alpha-1}\ds}}\in\cl{\conv{\cbr{f\br{s}\st{a}\leq{s}\leq{t}}}}.
		\end{equation}
	\end{lemma}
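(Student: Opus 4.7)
The plan is to argue by contradiction using the Hahn–Banach separation theorem (\thmRef{thm_12}), reducing the problem to a scalar inequality that can be handled by linearity of the Bochner integral. Fix $t\in\sbr{a,b}$ and denote by $y_t$ the LHS of \eqref{eq_02_12} and by $X\defeq\cl{\conv{\cbr{f\br{s}\st{a}\leq{s}\leq{t}}}}$ the set on the RHS. First I would dispose of the degenerate case $t=a$: both integrals vanish, so $y_t$ must be interpreted as the limit as $t\rightarrow{a^+}$ in the spirit of \remRef{rem_01}; by continuity of $f$, this limit is $f\br{a}$, which trivially lies in $X=\cbr{f\br{a}}$. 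So assume $t>a$.

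Suppose for contradiction that $y_t\notin{X}$. The set $X$ is non-empty, closed and convex by construction, and the singleton $\cbr{y_t}$ is compact and convex. Since the two sets are disjoint, \thmRef{thm_12} provides some $\phi\in\sE$, $\beta\in\R$ and $\eps>0$ with
\begin{equation*}
	\phi\br{x}\leq\beta-\eps\quad\textrm{for all }x\in{X},\qquad\phi\br{y_t}\geq\beta+\eps.
\end{equation*}
In particular $\phi\br{f\br{s}}\leq\beta-\eps$ for every $s\in\sbr{a,t}$.

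The core step is to commute $\phi$ with the Bochner integral. Since $f\in\C{\sbr{a,b}}{E}$ and $\br{t-s}^{\alpha-1}\in\LOne{\sbr{a,t}}{\R}$, the integrand in the numerator of $y_t$ belongs to $\LOne{\sbr{a,t}}{E}$ by \thmRef{thm_01}, and the standard fact that any element of $\sE$ passes under the Bochner integral (a routine consequence of approximating by simple functions, for which it is immediate by linearity) yields
\begin{equation*}
	\phi\br{\Bint{a}{t}{\br{t-s}^{\alpha-1}f\br{s}\ds}}=\Lint{a}{t}{\br{t-s}^{\alpha-1}\phi\br{f\br{s}}\ds}\leq\br{\beta-\eps}\Lint{a}{t}{\br{t-s}^{\alpha-1}\ds}.
\end{equation*}
Dividing by the strictly positive denominator $\Lint{a}{t}{\br{t-s}^{\alpha-1}\ds}=\br{t-a}^\alpha/\alpha$ gives $\phi\br{y_t}\leq\beta-\eps$, contradicting $\phi\br{y_t}\geq\beta+\eps$. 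Hence $y_t\in{X}$.

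The only mildly delicate point is invoking the commutation of $\phi\in\sE$ with the Bochner integral — this is where the whole argument hinges, and while it is a standard property, the paper has not stated it explicitly, so I would include a short parenthetical justification (via approximation by simple functions and \thmRef{thm_01}) to keep the proof self-contained. Everything else is direct: Hahn–Banach supplies the separating functional, and the singular kernel $\br{t-s}^{\alpha-1}$ plays no special role beyond being a non-negative $L^1$ weight, which is precisely why the classical mean-value-type statement extends to the fractional case.
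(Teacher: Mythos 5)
Your proof is correct, and it reaches the conclusion by a genuinely leaner route than the paper, although both arguments share the same engine: Hahn--Banach separation (\thmRef{thm_12}) plus the ability to pass a bounded linear functional through the integral. The paper first characterises the closed convex hull $K$ as the intersection of all closed half-spaces containing it, then verifies membership for a \emph{continuous} weight $g$ by writing the abstract Riemann integral as a limit of Riemann sums (so that linearity and continuity of the separating functional act termwise on points $f\br{\tau_j}\in{K}$), and only then reaches the singular kernel $\br{t-s}^{\alpha-1}$ by truncating the integral at $b-\eps$ and letting $\eps\rightarrow0^+$, using closedness of $K$. You collapse all of this into a single step by invoking the identity $\phi\br{\Bint{a}{t}{g\br{s}\ds}}=\Lint{a}{t}{\phi\br{g\br{s}}\ds}$ for Bochner integrals and $\phi\in\sE$, which makes the singularity of the kernel irrelevant and removes the need for both the Riemann-sum approximation and the truncation; the contradiction via strict separation is then immediate. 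The trade-off is exactly the one you identify yourself: that commutation identity is nowhere stated in the paper, so your proof is self-contained only once you supply it (approximation by simple functions, or a citation to Hille's theorem), whereas the paper's longer argument buys self-containedness at the price of the $\eps$-truncation bookkeeping. Two minor points: your appeal to \thmRef{thm_01} for Bochner integrability of $\br{t-s}^{\alpha-1}f\br{s}$ is slightly misplaced (that theorem gives the norm inequality for functions already known to be integrable; what you actually need is Bochner's criterion, namely strong measurability together with integrability of $s\mapsto\br{t-s}^{\alpha-1}\norm{f\br{s}}_E$), and your explicit treatment of the degenerate endpoint $t=a$ as a limit in the sense of \remarkRef{rem_01} is in fact more careful than the paper, which silently reduces to the case $t=b$.
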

	\begin{proof}
	It is obviously enough to show the statement for $t=b$. Define $K$ as
	\begin{equation}\label{eq_02_13}
		K\defeq\cl{\conv{\cbr{f\br{t}\st{a}\leq{t}\leq{b}}}}.
	\end{equation}
	Firstly, we show that
	\begin{equation}\label{eq_02_14}
		K=\bigcap_{\lambda\in\R,h\in\sE\,\st\cbr{x\in{E}\,\st{h}\br{x}\leq\lambda}\supseteq{K}}\cbr{x\in{E}\st{h}\br{x}\leq\lambda}\eqdef{S}.
	\end{equation}
	
	\enquote{$\subseteq$} Since all the sets on the RHS of \eqref{eq_02_14} contain $K$, also their intersection does.
	
	\enquote{$\supseteq$} It is enough to show that if $x\notin{K}$, then $x\notin{S}$. Let $x\in{E}$ such that $x\notin{K}$ be arbitrary. Applying \theoremRef{thm_12} (on the closed, convex set $K$ and compact, convex set $\cbr{x}$), we obtain
	\begin{equation*}
		\exists{h}_0\in\sE\,\exists\lambda_0\in\R\st{h}_0\br{y}\leq\lambda_0\text{ and }h_0\br{x}>\lambda_0,\quad\forall{y}\in{K}.
	\end{equation*}
	This means that $x\notin{S}$, thus we are done.
	
	Next, we show that for $g\in\C{\sbr{a,b}}{\R}$ it holds that (compare with \eqref{eq_02_12})
	\begin{equation}\label{eq_02_15}
		\frac{\Bint{a}{b}{g\br{t}f\br{t}\dt}}{\Lint{a}{b}{g\br{t}\dt}}\in\cl{\conv{\cbr{f\br{t}\st{a}\leq{t}\leq{b}}}}\eqText{\eqref{eq_02_13}}K.
	\end{equation}
	Note that since $f$ and $g$ are both continuous, both the integrals in \eqref{eq_02_15} reduce to Riemann integrals (to be precise, to abstract Riemann integral and Riemann integral, respectively). Define $A$ as the denominator of the LHS of \eqref{eq_02_15}, i.e.
	\begin{equation}\label{eq_02_16}
		A\defeq\Lint{a}{b}{g\br{t}\dt}.
	\end{equation}
	
	Consider a partition $a=\tau_0<\tau_1<\dots<\tau_n=b$ of $\sbr{a,b}$ and define $\nu$ as
	\begin{equation*}
		\nu\defeq\max_{j=1,\dots,n}\br{\tau_j-\tau_{j-1}}.
	\end{equation*}
	Consider an arbitrary sequence of such partitions for which $\nu\rightarrow0^+$. By the definition of abstract Riemann integral, we readily obtain
	\begin{equation}\label{eq_02_17}
		\Bint{a}{b}{g\br{t}f\br{t}\dt}=\lim_{\nu\rightarrow0^+}\sum_{j=1}^n\br{\tau_j-\tau_{j-1}}\cdot{g}\br{\tau_j}f\br{\tau_j}.
	\end{equation}
	Let $\lambda_0\in\R$ and $h_0\in\sE$ be arbitrary such that $K\subseteq\cbr{x\in{E}\st{h}_0\br{x}\leq\lambda_0}$. Then
	\begin{equation}\label{eq_02_18}
		\begin{aligned}
			&h_0\br{\frac{1}{A}\cdot\Bint{a}{b}{g\br{t}f\br{t}\dt}}\eqText{\eqref{eq_02_17}}h_0\br{\frac{1}{A}\cdot\lim_{\nu\rightarrow0^+}\sum_{j=1}^n\br{\tau_j-\tau_{j-1}}\cdot{g}\br{\tau_j}f\br{\tau_j}}\\
			&\qquad\qquad\eqText{h_0\text{ is linear and continuous}}\frac{1}{A}\cdot\lim_{\nu\rightarrow0^+}\sum_{j=1}^n\br{\tau_j-\tau_{j-1}}\cdot{g}\br{\tau_j}\cdot{h}_0\br{f\br{\tau_j}}\\
			&\qquad\qquad\leqText{f\br{\tau_j}\in{K}\subseteq\cbr{x\in{E}\,\st{h}_0\br{x}\leq\lambda_0}}\lambda_0\cdot\frac{1}{A}\cdot\lim_{\nu\rightarrow0^+}\sum_{j=1}^n\br{\tau_j-\tau_{j-1}}\cdot{g}\br{\tau_j}\eqText{\eqref{eq_02_16}}\lambda_0.
		\end{aligned}
	\end{equation}
	Since $\lambda_0\in\R$ and $h_0\in\sE$ were chosen as arbitrary (satisfying the additional condition), \eqref{eq_02_15} follows immediately from \eqref{eq_02_14} and \eqref{eq_02_18}.
	
	Let $\eps>0$ be arbitrary. From \eqref{eq_02_15} we readily obtain
	\begin{equation}\label{eq_02_19}
		J\br{\eps}\defeq\frac{\Bint{a}{b-\eps}{\br{b-t}^{\alpha-1}f\br{t}\dt}}{\Lint{a}{b-\eps}{\br{b-t}^{\alpha-1}\dt}}\in\cl{\conv{\cbr{f\br{t}\st{a}\leq{t}\leq{b}-\eps}}}\subseteq{K}.
	\end{equation}
	Notice that in the limit $\eps\rightarrow0^+$, $J\br{\eps}$ converges to the LHS of \eqref{eq_02_12}. Moreover, since in \eqref{eq_02_19}, the relation $J\br{\eps}\in{K}$ holds for all $\eps>0$, and since $K$ is closed, we immediately obtain
	\begin{equation*}
		\lim_{\eps\rightarrow0^+}J\br{\eps}\in{K}\eqText{\eqref{eq_02_13}}\cl{\conv{\cbr{f\br{t}\st{a}\leq{t}\leq{b}}}}.
	\end{equation*}
	Hence proved.
	\end{proof}
	
	\subsection{Measures of non-compactness}\label{sec_02_06}
	In this section, we briefly introduce the concept of measures of non-compactness. For our purpose, the axiomatic approach is well-suited and is thoroughly described in Chapter $3$ in \cite{Banas_book}. Firstly, recall that $\fM_E$ denotes the family of all the non-empty and bounded subsets of $E$, whereas $\fN_E$ denotes the family of all the non-empty and relatively compact subsets of $E$.
	\begin{remark}
		Note that for $X,Y\subseteq{E}$ and $\lambda_1,\lambda_2\in\R$, the linear combination $\lambda_1{X}+\lambda_2{Y}$ is defined as
		\begin{equation*}
			\lambda_1{X}+\lambda_2{Y}\defeq\cbr{\lambda_1{x}+\lambda_2{y}\st{x}\in{X},y\in{Y}}.
		\end{equation*}
	\end{remark}
	\begin{definition}[\normalfont{Measure of non-compactness}]\label{def_04}
		The mapping $\mu\st\fM_E\rightarrow\lsrbr{0,\infty}$ is called a \textit{measure of non-compactness in $E$}, if it satisfies the following conditions
		\begin{enumerate}[(i)]
			\item The family $\ker\mu\defeq\cbr{X\in\fM_E\st\mu\br{X}=0}$ is non-empty and $\ker\mu\subseteq\fN_E$.\label{def_04_ax_01}
			\item $\mu\br{\cl{X}}=\mu\br{X}$, for all $X\in\fM_E$.
			\item If $X\subseteq{Y}$, then $\mu\br{X}\leq\mu\br{Y}$, for all $X,Y\in\fM_E$.\label{def_04_ax_03}
			\item $\mu\br{\conv{X}}=\mu\br{X}$, for all $X\in\fM_E$.
			\item $\mu\br{\lambda{X}+\br{1-\lambda}Y}\leq\lambda\cdot\mu\br{X}+\br{1-\lambda}\cdot\mu\br{Y}$,\, for all $X,Y\in\fM_E$,\, for all $\lambda\in\sbr{0,1}$.
			\item Let $\Lambda$ be an index set. If $\cbr{X_{\lambda}}_{\lambda\in\Lambda}\subseteq\fM_E$ is a non-empty family of non-empty, closed and bounded sets totally ordered by inclusion and if for any $\eps>0$ there exists $\lambda\in\Lambda$ such that $\mu\br{X_{\lambda}}\leq\eps$, then $X_{\infty}\defeq\bigcap_{\lambda\in\Lambda}X_{\lambda}\neq\emptyset$ and $X_{\infty}\in\ker\mu$.\label{def_04_ax_06}
		\end{enumerate}
	\end{definition}
	\begin{definition}\label{def_05}
		We say that a measure of non-compactness $\mu$ is \textit{sublinear}, if
		\begin{enumerate}[(i)]\addtocounter{enumi}{6}
			\item $\mu\br{\lambda{X}}=\abs{\lambda}\cdot\mu\br{X}$, for all $\lambda\in\R$, for all $X\in\fM_E$.
			\item $\mu\br{X+Y}\leq\mu\br{X}+\mu\br{Y}$, for all $X,Y\in\fM_E$.
		\end{enumerate}
	\end{definition}
	\begin{definition}\label{def_06}
		We say that a measure of non-compactness $\mu$ satisfies the \textit{singleton property}, if
		\begin{enumerate}[(i)]\addtocounter{enumi}{8}
			\item $\mu\br{\cbr{x}}=0$, for all $x\in{E}$.
		\end{enumerate}
	\end{definition}
	\begin{remark}
		Notion of the singleton property for a measure of non-compactness was introduced by us, as we find it to be a crucial assumption in the main existence theorem.
	\end{remark}
	\begin{remark}\label{rem_09}
		Note that the singleton property does not follow from the axioms of \definitionRef{def_04}, nor from the axioms of \definitionRef{def_05}. Indeed, it is quite easy to verify that the mapping $\mu\st{X}\longmapsto\sup_{x\in{X}}\norm{x}_E$, for any $X\in\fM_E$, is a sublinear measure of non-compactness, but does not have the singleton property, since $\mu\br{X}=0$ if and only if $X=\cbr{0_E}$ (i.e. $\ker\mu=\cbr{\cbr{0_E}}$). This example, originally as an example of a measure of non-compactness with the kernel consisting of a single set, was taken from Section $3.1$ in \cite{Banas_book}.
	\end{remark}
	\begin{definition}
		The \textit{Hausdorff measure of non-compactness}, denoted by $\chi\br{\cdot}$, is defined as
		\begin{equation}\label{eq_02_20}
			\chi\br{X}\defeq\inf\cbr{\eps>0\st{X}\text{ has a finite $\eps$-net in }E},
		\end{equation}
		for every $X\in\fM_E$.
	\end{definition}
	\begin{remark}\label{rem_10}
		According to \cite{Banas_book_sequences}, $\chi\br{\cdot}$ is a sublinear measure of non-compactness satisfying the singleton property (see Lemma $5.9$ and Lemma $5.10$ in \cite{Banas_book_sequences}). Moreover, one of the interesting properties of the Hausdorff measure of non-compactness is that $\chi\br{X}=0$ if and only if $X\in\fN_E$.
	\end{remark}
	\begin{remark}
		Even though we have a general formula for the Hausdorff measure of non-compactness, the form \eqref{eq_02_20} is not suitable for applications. Nevertheless, for some particular Banach spaces, the Hausdorff measure of non-compactness is well-described. That is the case of e.g. sequence spaces, for which such measures are thoroughly described in \cite{Banas_book_sequences}.
	\end{remark}
	\begin{theorem}[\normalfont{\cite{Banas_book_sequences}, Theorem $5.18$}]\label{thm_13}
		The Hausdorff measure of non-compactness in the space $c_0$ is of the form
		\begin{equation}\label{eq_02_21}
			\chi\br{X}=\lim_{k\rightarrow\infty}\sup_{\bx\in{X}}\sup_{l\geq{k}}\abs{x_l},
		\end{equation}
		for every $X\in\fM_E$.
	\end{theorem}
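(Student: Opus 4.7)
The plan is to prove the equality by double inequality. First I would denote the right-hand side of \eqref{eq_02_21} by
\begin{equation*}
	\chi_0\br{X}\defeq\lim_{k\rightarrow\infty}\sup_{\bx\in{X}}\sup_{l\geq{k}}\abs{x_l}
\end{equation*}
and observe that the limit exists because the sequence $a_k\defeq\sup_{\bx\in{X}}\sup_{l\geq{k}}\abs{x_l}$ is non-increasing in $k$ and bounded below by $0$ (boundedness of $X$ in $c_0$ guarantees $a_1<\infty$).

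For the inequality $\chi\br{X}\leq\chi_0\br{X}$, I would fix an arbitrary $\eta>0$ and choose $k\in\N$ large enough so that $a_k\leq\chi_0\br{X}+\eta$. Then I would introduce the truncation operator $P_k\st{c}_0\rightarrow{c}_0$ defined by $\br{P_k\bx}_l=x_l$ for $l<k$ and $\br{P_k\bx}_l=0$ for $l\geq{k}$. The set $P_k\br{X}$ lies in the finite-dimensional subspace spanned by the first $k-1$ coordinate vectors and is bounded, so it is relatively compact. Hence there exist finitely many $y^{\br{1}},\dots,y^{\br{N}}\in{c}_0$ such that for every $\bx\in{X}$ one has $\norm{P_k\bx-y^{\br{i}}}_{c_0}<\eta$ for some $i$. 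Since each $y^{\br{i}}$ vanishes beyond index $k-1$, we get
\begin{equation*}
	\norm{\bx-y^{\br{i}}}_{c_0}\leq\max\br{\sup_{l<k}\abs{x_l-y^{\br{i}}_l},\sup_{l\geq{k}}\abs{x_l}}\leq\max\br{\eta,\chi_0\br{X}+\eta}=\chi_0\br{X}+\eta.
\end{equation*}
Thus $\cbr{y^{\br{1}},\dots,y^{\br{N}}}$ is a finite $\br{\chi_0\br{X}+\eta}$-net for $X$ in $c_0$, so $\chi\br{X}\leq\chi_0\br{X}+\eta$, and letting $\eta\rightarrow0^+$ gives the desired inequality.

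For the reverse inequality $\chi_0\br{X}\leq\chi\br{X}$, I would fix an arbitrary $\eps>\chi\br{X}$ and let $\cbr{y^{\br{1}},\dots,y^{\br{N}}}\subseteq{c}_0$ be a finite $\eps$-net for $X$. Given any $\delta>0$, since each $y^{\br{i}}\in{c}_0$, there exists $k_i\in\N$ with $\abs{y^{\br{i}}_l}<\delta$ for all $l\geq{k}_i$; set $K\defeq\max_{i=1,\dots,N}k_i$. Then for any $\bx\in{X}$, choosing $i$ with $\norm{\bx-y^{\br{i}}}_{c_0}\leq\eps$, we obtain for any $l\geq{K}$ that $\abs{x_l}\leq\abs{x_l-y^{\br{i}}_l}+\abs{y^{\br{i}}_l}\leq\eps+\delta$. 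Taking suprema yields $\chi_0\br{X}\leq{a}_K\leq\eps+\delta$, and letting $\delta\rightarrow0^+$ then $\eps\rightarrow\chi\br{X}^+$ concludes the argument.

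The only genuinely delicate point is ensuring the first direction produces a net with the sharp constant $\chi_0\br{X}$ rather than some multiple of it; the trick is to use truncations (whose tails vanish identically) so that the tail contribution to $\norm{\bx-y^{\br{i}}}_{c_0}$ is controlled directly by $a_k$ rather than doubled by triangle-type estimates. Everything else is a routine compactness argument in a finite-dimensional subspace combined with the defining property of $c_0$ that individual sequences have vanishing tails.
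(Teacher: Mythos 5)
The paper does not prove this statement at all; it is imported verbatim as Theorem $5.18$ from the cited monograph of Bana\'s and Mursaleen, so there is no internal proof to compare against. Your argument is the standard double-inequality proof of that textbook result and is correct: the truncation operators $P_k$ give the upper bound with the sharp constant (the one point worth making explicit is that the finite $\eta$-net of $P_k\br{X}$ can indeed be chosen inside the finite-dimensional subspace, e.g.\ by taking net points from $P_k\br{X}$ itself or by applying $P_k$ to an arbitrary net, so that the tail of $\bx-y^{\br{i}}$ really is just the tail of $\bx$), and the reverse inequality follows from the vanishing tails of the finitely many net elements exactly as you write.
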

	\begin{remark}
		Recall that $X\in\fM_{\C{\sbr{a,b}}{E}}$ denotes a non-empty and bounded set (with respect to the $\norm{\cdot}_{\infty}$ norm) of continuous functions $f\st\sbr{a,b}\rightarrow{E}$.
	\end{remark}
	\begin{lemma}[\normalfont{\cite{Banas_book}, Lemma $13.2.1$}]\label{lm_02}
		Let $\mu$ be a sublinear measure of non-compactness and $X\in\fM_{\C{\sbr{a,b}}{E}}$. Then for all $t,s\in\sbr{a,b}$ it holds that
		\begin{equation*}
			\abs{\mu\br{X\br{t}}-\mu\br{X\br{s}}}\leq\mu\br{\ball{E}{0_E}{1}}\cdot\sup_{x\in{X}}\,\sup_{\tau_1,\tau_2\in\sbr{a,b}\,\st\abs{\tau_1-\tau_2}\leq\abs{t-s}}\norm{x\br{\tau_1}-x\br{\tau_2}}_E.
		\end{equation*}
	\end{lemma}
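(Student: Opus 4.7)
The plan is to exploit sublinearity by exhibiting $X(t)$ as a subset of the Minkowski sum $X(s)+r\cdot K_E(0_E,1)$ for a suitable radius $r$ controlled by the modulus of continuity, and then apply the monotonicity, translation/scalar axioms of a sublinear measure of non-compactness. Symmetry in $t$ and $s$ then yields the two-sided bound.

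First, let me abbreviate
\[
\omega \defeq \sup_{x\in X}\,\sup_{\tau_1,\tau_2\in\sbr{a,b}\,\st\abs{\tau_1-\tau_2}\leq\abs{t-s}}\norm{x\br{\tau_1}-x\br{\tau_2}}_E.
\]
Since $X$ is bounded in $\C{\sbr{a,b}}{E}$, there is $M>0$ with $\norm{x}_{\infty}\leq M$ for every $x\in X$, so $\omega\leq 2M<\infty$. If $\omega=0$ the inequality is trivial (and both $X(t)$, $X(s)$ coincide), so I will assume $\omega>0$.

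Second, I would observe the key inclusion
\[
X\br{t}\subseteq X\br{s}+\omega\cdot\ball{E}{0_E}{1}.
\]
This holds because for every $x\in X$ one can write $x\br{t}=x\br{s}+\br{x\br{t}-x\br{s}}$, and by the definition of $\omega$ we have $\norm{x\br{t}-x\br{s}}_E\leq\omega$, so $x\br{t}-x\br{s}\in\omega\cdot\ball{E}{0_E}{1}$. Both summands lie in $\fM_E$ (as $X\br{s}$ inherits boundedness from $X$ and the scaled ball is bounded), so sublinearity applies. Using axioms (iii), (vii), and (viii) of \defRef{def_04}/\defRef{def_05}, I would then estimate
\[
\mu\br{X\br{t}}\leq\mu\br{X\br{s}+\omega\cdot\ball{E}{0_E}{1}}\leq\mu\br{X\br{s}}+\omega\cdot\mu\br{\ball{E}{0_E}{1}},
\]
so $\mu\br{X\br{t}}-\mu\br{X\br{s}}\leq\omega\cdot\mu\br{\ball{E}{0_E}{1}}$.

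Finally, swapping the roles of $t$ and $s$ gives the same bound with $\mu\br{X\br{s}}-\mu\br{X\br{t}}$ on the left (the definition of $\omega$ is symmetric in $t,s$), and combining the two one-sided inequalities produces the absolute value bound in the statement. There is no serious obstacle here: the argument is a direct bookkeeping of the sublinearity axioms once the right Minkowski inclusion is written down, and the only mild subtlety is confirming that all sets involved belong to $\fM_E$ so that the axioms are applicable.
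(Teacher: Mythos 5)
Your argument is correct: the Minkowski inclusion $X\br{t}\subseteq X\br{s}+\omega\cdot\ball{E}{0_E}{1}$ combined with monotonicity, homogeneity and subadditivity of $\mu$ is exactly the standard proof of this estimate, and the symmetry step is sound. The paper itself gives no proof here (the lemma is quoted from Bana\'s--Goebel, Lemma $13.2.1$), so there is nothing to contrast with; your write-up matches the classical argument.
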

	\begin{remark}
		Note that for $f\in\LOne{\sbr{a,b}}{\R}$ and $X\in\fM_{\C{\sbr{a,b}}{E}}$, we define
		\begin{equation*}
			\Bint{a}{b}{f\br{t}\cdot{X}\br{t}\dt}\defeq\cbr{\Bint{a}{b}{f\br{t}\cdot{u}\br{t}\dt}\st{u}\in{X}}.
		\end{equation*}
	\end{remark}
	\begin{remark}
		The next lemma is an extension of Lemma $13.2.2$ in \cite{Banas_book} for integrals with singular kernel. Since we were not able to find such a result in the existing literature, we provide a detailed proof ourselves.
	\end{remark}
	\begin{lemma}\label{lm_03}
		Let $\mu$ be a sublinear measure of non-compactness. Let $X\in\fM_{\C{\sbr{a,b}}{E}}$ be uniformly equicontinuous. Then for all $t\in\sbr{a,b}$ it holds that
		\begin{equation}\label{eq_02_22}
			\mu\br{\Bint{a}{t}{\br{t-s}^{\alpha-1}\cdot{X}\br{s}\ds}}\leq\Lint{a}{t}{\br{t-s}^{\alpha-1}\cdot\mu\br{X\br{s}}\ds}.
		\end{equation}
	\end{lemma}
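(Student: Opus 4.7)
The plan is to extend the standard proof for non-singular kernels (e.g.\ Lemma $13.2.2$ in \cite{Banas_book}) by handling the $s=t$ singularity through a partition-and-limit argument. The key tools are \lemmaRef{lm_01}, which supplies a mean-value-type representation on each subinterval even for the singular weight $\br{t-s}^{\alpha-1}$, and \lemmaRef{lm_02}, which controls the modulus of continuity of $s\mapsto\mu\br{X\br{s}}$.

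For $t=a$ the integral set reduces to $\cbr{0_E}$, and positive homogeneity applied with $\lambda=0$ gives $\mu\br{\cbr{0_E}}=0$, so assume $t\in(a,b]$. Pick a partition $a=\tau_0<\tau_1<\dots<\tau_n=t$ of mesh $\nu\defeq\max_{j}\br{\tau_j-\tau_{j-1}}$. Applying \lemmaRef{lm_01} on each $\sbr{\tau_{j-1},\tau_j}$ with the weight $\br{t-s}^{\alpha-1}$, for every $u\in X$ one obtains
\begin{equation*}
\Bint{\tau_{j-1}}{\tau_j}{\br{t-s}^{\alpha-1}u\br{s}\ds}=c_j\cdot y_{j,u},\quad y_{j,u}\in\cl{\conv{X\br{\sbr{\tau_{j-1},\tau_j}}}},
\end{equation*}
where $c_j\defeq\Lint{\tau_{j-1}}{\tau_j}{\br{t-s}^{\alpha-1}\ds}>0$. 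Summing in $j$ and combining with additivity of the Bochner integral yields
\begin{equation*}
\Bint{a}{t}{\br{t-s}^{\alpha-1}X\br{s}\ds}\subseteq\sum_{j=1}^n c_j\cdot\cl{\conv{X\br{\sbr{\tau_{j-1},\tau_j}}}}.
\end{equation*}
Applying $\mu$ together with monotonicity, closure and convex-hull invariance, positive homogeneity and subadditivity then produces
\begin{equation*}
\mu\br{\Bint{a}{t}{\br{t-s}^{\alpha-1}X\br{s}\ds}}\leq\sum_{j=1}^n c_j\cdot\mu\br{X\br{\sbr{\tau_{j-1},\tau_j}}}.
\end{equation*}

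Let $\omega\br{\nu}\defeq\sup_{u\in X}\sup_{\abs{\tau_1-\tau_2}\leq\nu}\norm{u\br{\tau_1}-u\br{\tau_2}}_E$, which tends to $0$ as $\nu\to 0^+$ by uniform equicontinuity of $X$. For any fixed $s^*\in\sbr{\tau_{j-1},\tau_j}$ the inclusion $X\br{\sbr{\tau_{j-1},\tau_j}}\subseteq X\br{s^*}+\omega\br{\nu}\cdot\ball{E}{0_E}{1}$ holds, so sublinearity yields $\mu\br{X\br{\sbr{\tau_{j-1},\tau_j}}}\leq\mu\br{X\br{s^*}}+\omega\br{\nu}\cdot\mu\br{\ball{E}{0_E}{1}}$. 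Choosing $s^*=\tau_j$ and using $\sum_j c_j=\br{t-a}^\alpha/\alpha$, the preceding bound is in turn dominated by
\begin{equation*}
\sum_{j=1}^n c_j\cdot\mu\br{X\br{\tau_j}}+\omega\br{\nu}\cdot\mu\br{\ball{E}{0_E}{1}}\cdot\frac{\br{t-a}^\alpha}{\alpha}.
\end{equation*}

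The main obstacle is passing $\nu\to 0^+$ in $\sum_{j=1}^n c_j\cdot\mu\br{X\br{\tau_j}}$ and identifying the limit as the Lebesgue integral $\Lint{a}{t}{\br{t-s}^{\alpha-1}\mu\br{X\br{s}}\ds}$, despite the singular weight. By \lemmaRef{lm_02} the function $s\mapsto\mu\br{X\br{s}}$ is continuous on $\sbr{a,t}$, hence bounded by some $M\geq 0$. I plan to split off the singularity: for any $\delta>0$, the total contribution of indices with $\tau_{j-1}\geq t-\delta$ is at most $M\cdot\delta^\alpha/\alpha$, while on $\sbr{a,t-\delta}$ the weight $\br{t-s}^{\alpha-1}$ is continuous, so the remaining partial sum is a standard Riemann sum that converges to $\Lint{a}{t-\delta}{\br{t-s}^{\alpha-1}\mu\br{X\br{s}}\ds}$ as $\nu\to 0^+$. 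Letting first $\nu\to 0^+$ and then $\delta\to 0^+$ yields \eqref{eq_02_22}.
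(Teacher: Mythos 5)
Your argument is correct. It shares the overall skeleton of the paper's proof --- partition $\sbr{a,t}$, reduce the measure of the integral set to the weighted sum $\sum_{j}c_j\cdot\mu\br{X\br{\tau_j}}$ with the \emph{exact} weights $c_j=\Lint{\tau_{j-1}}{\tau_j}{\br{t-s}^{\alpha-1}\ds}$, absorb an equicontinuity error into a multiple of $\ball{E}{0_E}{1}$, and pass to the limit --- but the mechanism of the key reduction is genuinely different. The paper does not use \lemmaRef{lm_01} here at all: it bounds $\norm{\Bint{a}{t}{\br{t-s}^{\alpha-1}u\br{s}\ds}-\sum_{j}c_j\,u\br{\tau_j}}_E$ directly by an $\eps$-term and writes the integral set as the sum set plus a small ball (the \enquote{clever zero}), after which only monotonicity and sublinearity are needed. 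You instead represent each piece exactly as $c_j\,y_{j,u}$ with $y_{j,u}$ in a closed convex hull via the mean-value lemma, and only afterwards compare $\mu\br{X\br{\sbr{\tau_{j-1},\tau_j}}}$ with $\mu\br{X\br{\tau_j}}$; this trades the paper's norm estimate for an extra appeal to the closure and convex-hull invariance axioms. Two minor remarks, neither of which is a gap. First, \lemmaRef{lm_01} as stated covers only the kernel anchored at the right endpoint of the integration interval, so on $\sbr{\tau_{j-1},\tau_j}$ with $j<n$ (where the weight $\br{t-s}^{\alpha-1}$ is merely a continuous positive function) you are really invoking the intermediate fact \eqref{eq_02_15} from the proof of \lemmaRef{lm_01} rather than the lemma itself; only the last subinterval is covered verbatim. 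Second, the final $\delta$-splitting is unnecessary: because the $c_j$ are exact integrals of the kernel, the difference between $\sum_j c_j\,\mu\br{X\br{\tau_j}}$ and $\Lint{a}{t}{\br{t-s}^{\alpha-1}\mu\br{X\br{s}}\ds}$ is bounded by the modulus of continuity of $\mu\br{X\br{\cdot}}$ at scale $\nu$ times $\br{t-a}^{\alpha}/\alpha$, which already tends to zero; this is precisely the paper's \enquote{similar arguments as in \eqref{eq_02_24}} step.
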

	\begin{proof}
	It is obviously enough to show the statement for $t=b$. Due to the fact that the set $X\subseteq\C{\sbr{a,b}}{E}$ is uniformly equicontinuous, it holds that
	\begin{equation}\label{eq_02_23}
		\begin{aligned}
			\forall\eps>0\,&\exists{a}=\tau_0<\tau_1<\dots<\tau_n=b\st\norm{u\br{t}-u\br{s}}_E\leq\eps;\\
			&\forall{u}\in{X};\,\forall{t},s\in\sbr{\tau_{j-1},\tau_j};\,\forall{j}=1,2,\dots,n.
		\end{aligned}
	\end{equation}
	Let $u\in{X}$ and $\eps>0$ be arbitrary. Consider the partition $a=\tau_0<\tau_1<\dots<\tau_n=b$ from \eqref{eq_02_23}. Then
	\begin{equation}\label{eq_02_24}
		\begin{aligned}
			&\norm{\Bint{a}{b}{\br{b-t}^{\alpha-1}u\br{t}\dt}-\sum_{j=1}^n\br{\Lint{\tau_{j-1}}{\tau_j}{\br{b-t}^{\alpha-1}\dt}\cdot{u}\br{\tau_j}}}_E\\
			&\qquad=\norm{\sum_{j=1}^n\br{\Bint{\tau_{j-1}}{\tau_j}{\br{b-t}^{\alpha-1}u\br{t}\dt}}-\sum_{j=1}^n\br{\Bint{\tau_{j-1}}{\tau_j}{\br{b-t}^{\alpha-1}u\br{\tau_j}\dt}}}_E\\
			&\qquad\leqText{\thmRef{thm_01}}\sum_{j=1}^n\br{\Lint{\tau_{j-1}}{\tau_j}{\br{b-t}^{\alpha-1}\cdot\norm{u\br{t}-u\br{\tau_j}}_E\,\dt}}\\
			&\qquad\leqText{\eqref{eq_02_23}}\eps\cdot\Lint{a}{b}{\br{b-t}^{\alpha-1}\dt}=\frac{1}{\alpha}\br{b-a}^{\alpha}\cdot\eps\eqdef{A}\cdot\eps.
		\end{aligned}
	\end{equation}
	Since $u\in{X}$ was chosen as arbitrary, we readily obtain (by adding a \enquote{clever zero})
	\begin{equation*}
		\begin{aligned}
			&\Bint{a}{b}{\br{b-t}^{\alpha-1}\cdot{X}\br{t}\dt}\\
			&\qquad\subseteq\cbr{\Bint{a}{b}{\br{b-t}^{\alpha-1}u\br{t}\dt}-\sum_{j=1}^n\br{\Lint{\tau_{j-1}}{\tau_j}{\br{b-t}^{\alpha-1}\dt}\cdot{u}\br{\tau_j}}\st{u}\in{X}}+\\
			&\qquad\qquad\qquad+\cbr{\sum_{j=1}^n\br{\Lint{\tau_{j-1}}{\tau_j}{\br{b-t}^{\alpha-1}\dt}\cdot{u}\br{\tau_j}}\st{u}\in{X}}\\
			&\qquad\subseteqText{\eqref{eq_02_24}}A\cdot\eps\cdot\ball{E}{0_E}{1}+\cbr{\sum_{j=1}^n\br{\Lint{\tau_{j-1}}{\tau_j}{\br{b-t}^{\alpha-1}\dt}\cdot{u}\br{\tau_j}}\st{u}\in{X}}.
		\end{aligned}
	\end{equation*}
	Due to \axiomRef{def_04_ax_03} of \definitionRef{def_04} and the sublinearity of $\mu$ (see \definitionRef{def_05}), it follows that
	\begin{equation}\label{eq_02_25}
		\begin{aligned}
			&\mu\br{\Bint{a}{b}{\br{b-t}^{\alpha-1}\cdot{X}\br{t}\dt}}\\
			&\qquad\qquad\leq{A}\cdot\eps\cdot\mu\br{\ball{E}{0_E}{1}}+\sum_{j=1}^n\br{\Lint{\tau_{j-1}}{\tau_j}{\br{b-t}^{\alpha-1}\dt}\cdot\mu\br{X\br{\tau_j}}}.
		\end{aligned}
	\end{equation}
	In the limit $\eps\rightarrow0^+$, the LHS of \eqref{eq_02_25} does not depend on $\eps$ and the first term on the RHS vanishes. Thus, we obtain (recall that $n$ depends on $\eps$)
	\begin{equation}\label{eq_02_26}
		\mu\br{\Bint{a}{b}{\br{b-t}^{\alpha-1}\cdot{X}\br{t}\dt}}\leq\lim_{\eps\rightarrow0^+}\sum_{j=1}^n\br{\Lint{\tau_{j-1}}{\tau_j}{\br{b-t}^{\alpha-1}\dt}\cdot\mu\br{X\br{\tau_j}}}.
	\end{equation}
	
	Moreover, from \lemmaRef{lm_02}, due to the uniform equicontinuity of $X$, it immediately follows that $\mu\br{X\br{\cdot}}$ is uniformly continuous on $\sbr{a,b}$. Using similar arguments as in \eqref{eq_02_24}, it is easy to verify that the RHS of \eqref{eq_02_26} converges to the RHS of \eqref{eq_02_22} (for $t=b$). Hence proved.
	\end{proof}

	\begin{lemma}[\normalfont{\cite{Banas_book}, Theorem $11.2$}]\label{lm_04}
		Let $\mu$ be a measure of non-compactness in $E$. Then the mapping $M\st\fM_{\C{\sbr{a,b}}{E}}\rightarrow\lsrbr{0,\infty}$ defined as
		\begin{equation*}
			M\br{X}\defeq\lim_{\eps\rightarrow0^+}\sup_{x\in{X}}\,\sup_{t,s\in\sbr{a,b}\,\st\abs{t-s}\leq\eps}\norm{x\br{t}-x\br{s}}_E+\sup_{t\in\sbr{a,b}}\br{\mu\br{X\br{t}}},
		\end{equation*}
		is a measure of non-compactness in the Banach space $\C{\sbr{a,b}}{E}$.
	\end{lemma}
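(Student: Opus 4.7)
The plan is to decompose $M=\omega_0+\nu$, where $\omega_0\br{X}\defeq\lim_{\eps\rightarrow0^+}\sup_{x\in X}\sup_{t,s\in\sbr{a,b}\,\st\abs{t-s}\leq\eps}\norm{x\br{t}-x\br{s}}_E$ is the modulus of uniform equicontinuity of $X$ and $\nu\br{X}\defeq\sup_{t\in\sbr{a,b}}\mu\br{X\br{t}}$, and then verify the six axioms of \definitionRef{def_04} for $M$ directly. Most axioms split cleanly into a scalar statement about $\omega_0$ (concerning only the sup-norm geometry of $\C{\sbr{a,b}}{E}$) and one about $\nu$ that is inherited pointwise from the corresponding axiom of $\mu$. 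Well-definedness is immediate: $\omega_0\br{X}$ is the infimum of a non-increasing function of $\eps$, bounded by $2\sup_{x\in X}\norm{x}_{\infty}<\infty$, and $\nu\br{X}$ is finite because each $X\br{t}\in\fM_E$.

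For axiom (i), $\ker M$ consists of those $X$ that are simultaneously uniformly equicontinuous ($\omega_0\br{X}=0$) and pointwise in $\ker\mu\subseteq\fN_E$; Arzelà-Ascoli for Banach-space-valued functions then places such $X$ in $\fN_{\C{\sbr{a,b}}{E}}$. Non-emptiness of $\ker M$ follows by picking any $Y\in\ker\mu$, any $y\in Y$, and the singleton $\cbr{x_0}$ with $x_0\equiv y$: trivially $\omega_0\br{\cbr{x_0}}=0$, and $\mu\br{\cbr{y}}\leq\mu\br{Y}=0$ by \axiomRef{def_04_ax_03}. Axioms (ii)-(v) are then routine: closure and convex hull in the supremum norm preserve the modulus of equicontinuity and satisfy $\cl{X}\br{t}\subseteq\cl{X\br{t}}$ and $\conv{X}\br{t}=\conv{X\br{t}}$, so the corresponding axioms of $\mu$ transfer pointwise to $\nu$; monotonicity is immediate; the convex-combination bound for $\omega_0$ is a triangle inequality, and for $\nu$ it is the pointwise application of axiom (v) of $\mu$.

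The principal obstacle is axiom (vi). Given a totally ordered (by inclusion) family $\cbr{X_\lambda}_{\lambda\in\Lambda}$ of non-empty, closed, bounded sets with $\inf_\lambda M\br{X_\lambda}=0$, I would first extract a decreasing chain $X_{\lambda_1}\supseteq X_{\lambda_2}\supseteq\cdots$ with $M\br{X_{\lambda_n}}<1/n$; this is possible by the total ordering together with the monotonicity of $M$ established above. For any selection $x_n\in X_{\lambda_n}$, the sequence $\cbr{x_n}$ is uniformly equicontinuous since $\omega_0\br{X_{\lambda_n}}\rightarrow 0$, and for each fixed $t\in\sbr{a,b}$ the sets $X_{\lambda_n}\br{t}$ form a decreasing chain of closed bounded subsets of $E$ with $\mu\br{X_{\lambda_n}\br{t}}\leq\nu\br{X_{\lambda_n}}\rightarrow 0$. \axiomRef{def_04_ax_06} applied in $E$ yields $\bigcap_n X_{\lambda_n}\br{t}\in\fN_E$, giving the pointwise relative compactness needed for Arzelà-Ascoli; the resulting uniformly convergent subsequence $x_{n_k}\rightarrow x_\infty$ satisfies $x_\infty\in\bigcap_n X_{\lambda_n}$ by closedness of each $X_{\lambda_n}$. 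A finite-intersection/compactness argument exploiting the total ordering (any $\lambda\in\Lambda$ either contains some $X_{\lambda_n}$, in which case $x_\infty\in X_\lambda$ directly, or satisfies $X_\lambda\subsetneq X_{\lambda_n}$ for all $n$, forcing $M\br{X_\lambda}=0$ so that $X_\lambda$ is itself compact and can be handled separately) then lifts this to $x_\infty\in X_\infty\defeq\bigcap_{\lambda\in\Lambda}X_\lambda$. Finally, $M\br{X_\infty}\leq M\br{X_{\lambda_n}}<1/n$ for every $n$ by monotonicity, so $X_\infty\in\ker M$ as required.
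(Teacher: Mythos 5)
The paper does not actually prove this lemma --- it is imported verbatim from Banaś--Goebel as Theorem $11.2$ --- so there is no in-paper argument to compare against. Your decomposition $M=\omega_0+\nu$ and the axiom-by-axiom verification is the standard route, and your treatment of well-definedness, axiom (i) (including the nice trick of getting non-emptiness of $\ker{M}$ from a constant function valued in a point of some $Y\in\ker\mu$, which avoids assuming the singleton property), and axioms (ii)--(v) is correct.

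There is, however, a genuine gap in your verification of axiom (vi), at the step \enquote{\axiomRef{def_04_ax_06} applied in $E$ yields $\bigcap_n X_{\lambda_n}\br{t}\in\fN_E$, giving the pointwise relative compactness needed for Arzelà--Ascoli.} Relative compactness of the \emph{intersection} $\bigcap_n X_{\lambda_n}\br{t}$ says nothing about the selected points $x_n\br{t}$: each $x_n\br{t}$ lies in $X_{\lambda_n}\br{t}$ but in general in no deeper set of the chain, so the sequence could a priori escape every neighbourhood of the intersection, and what Arzelà--Ascoli needs is relative compactness of $\cbr{x_n\br{t}\st{n}\in\N}$ itself. (A secondary point: the evaluation images $X_{\lambda_n}\br{t}$ need not be closed in $E$ even though $X_{\lambda_n}$ is closed in $\C{\sbr{a,b}}{E}$; you must pass to closures before invoking \axiomRef{def_04_ax_06}, which is harmless since $\mu\br{\cl{A}}=\mu\br{A}$.) Note also that you cannot shortcut this by arguing $\mu\br{\cbr{x_n\br{t}\st{n}}}=0$: for a general $\mu$ in the sense of \definitionRef{def_04} there is no axiom controlling finite unions or singletons (cf. \remarkRef{rem_09}), so knowing that every tail $\cbr{x_m\br{t}\st{m}\geq{n}}$ sits inside a set of measure less than $1/n$ does not make the measure of the full sequence zero. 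The repair is to apply \axiomRef{def_04_ax_06} of $\mu$ not to the sets $X_{\lambda_n}\br{t}$ but to the closed tails $A_n\defeq\cl{\cbr{x_m\br{t}\st{m}\geq{n}}}\subseteq\cl{X_{\lambda_n}\br{t}}$, which form a decreasing chain of non-empty closed bounded sets with $\mu\br{A_n}<1/n$; the non-empty intersection produces a cluster point of $\br{x_m\br{t}}_m$, and running the same argument on an arbitrary subsequence shows that every subsequence has a cluster point, i.e. $\cbr{x_m\br{t}\st{m}}$ is relatively compact. With that substitution your extraction of a uniformly convergent subsequence, the identification of its limit as an element of $\bigcap_n X_{\lambda_n}$, and the final case analysis reducing $\bigcap_{\lambda\in\Lambda}X_\lambda$ to this countable intersection (or to a chain of compact sets) all go through.
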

	
	\subsection{Kamke function of order $\alpha$}\label{sec_02_07}
	Now, we introduce the notion of a Kamke function of order $\alpha$, which is inspired by \cite{Li} and the Kamke comparison condition in Chapter 13 in \cite{Banas_book}. Notice that in the fractional case, Kamke function is dependent on the order $\alpha$.
	\begin{definition}[\normalfont{Kamke function of order $\alpha$}]\label{def_08}
		We say that $w_{\alpha}\st\sbr{a,b}\times\lsrbr{0,\infty}\rightarrow\lsrbr{0,\infty}$ is a \textit{Kamke function of order $\alpha$ on $\sbr{a,b}$}, if
		\begin{enumerate}[(i)]
			\item $w_{\alpha}$ is continuous on $\sbr{a,b}\times\lsrbr{0,\infty}$.
			\item $w_{\alpha}\br{t,0}=0$, for all $t\in\sbr{a,b}$.\label{def_08_ax_02}
			\item $u\equiv0$ is the unique non-negative continuous solution of
			\begin{equation}\label{eq_02_27}
				u\br{t}\leq\frac{1}{\Gamma\br{\alpha}}\cdot\Lint{a}{t}{\br{t-s}^{\alpha-1}\cdot{w}_{\alpha}\br{s,u\br{s}}\ds},\quad{t}\in\sbr{a,b},
			\end{equation}
			for which
			\begin{equation}\label{eq_02_28}
				\lim_{t\rightarrow{a}^+}\frac{u\br{t}}{\br{t-a}^{\alpha}}=0.
			\end{equation}\label{def_08_ax_03}
		\end{enumerate}
	\end{definition}
	\begin{remark}
		Note that due to \axiomRef{def_08_ax_02} of \definitionRef{def_08}, $u\equiv0$ is always a solution of \eqref{eq_02_27}. The question is whether it is the unique solution satisfying the additional condition \eqref{eq_02_28}.
	\end{remark}

	\section{Main Results}\label{sec_03}
	\setcounter{section}{3}
	\setcounter{equation}{0}
	Recall that $E$ denotes a real Banach space and $0<\alpha<1$. Let $\sdelta>0$, $a\in\R$, $u_0\in{E}$ and define $\sI\defeq\sbr{a,a+\sdelta}$, $R\defeq\sI\times\ball{E}{u_0}{\beta}$, for some $\beta>0$. Let $f\st{R}\rightarrow{E}$. Further on, we will study the following IVP
	\begin{subequations}\label{eq_03_01}
		\begin{align}
			\DC{\alpha}{a}{u\br{t}}&=f\br{t,u\br{t}},\quad{t}\in\sI,\\
			u\br{a}&=u_0.\label{eq_03_01_02}
		\end{align}
	\end{subequations}
	
	\begin{theorem}\label{thm_14}
		Consider the IVP \eqref{eq_03_01}. Let $w_{\alpha}$ be a Kamke function of order $\alpha$ on $\sI$ and let $\mu$ be a sublinear measure of non-compactness in $E$ satisfying the singleton property (see \definitionRef{def_06}). Assume that
		\begin{enumerate}[(i)]
			\item $f$ is uniformly continuous on $R$.
			\item $\norm{f\br{t,u}}_E\leq{M}$, for all $\br{t,u}\in{R}$, for some $M>0$.\label{th_03_01_ass_02}
			\item $\mu\br{f\br{t,X}}\leq{w}_{\alpha}\br{t,\mu\br{X}}$, for all $t\in\sI$, for all $\emptyset\neq{X}\subseteq\ball{E}{u_0}{\beta}$.\label{th_03_01_ass_03}
		\end{enumerate}
		Then there exists a solution of \eqref{eq_03_01} on $I\defeq\sbr{a,a+\delta}$, where $\delta$ is defined as
		\begin{equation}\label{eq_03_02}
			\delta\defeq\min\cbr{\sdelta,\br{\frac{\beta}{M}\cdot\Gamma\br{\alpha+1}}^{\frac{1}{\alpha}}}.
		\end{equation}
	\end{theorem}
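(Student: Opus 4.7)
My plan is the Banach-space analogue of the Darbo--M\"onch iteration: reformulate \eqref{eq_03_01} as a fixed-point problem, iterate closed convex hulls to extract a compact invariant subset, then invoke Schauder. By \thmRef{thm_09}, $u$ solves \eqref{eq_03_01} iff $F(u) = u$, where $F(u)(t) := u_0 + \JRL{\alpha}{a}{f(\cdot, u(\cdot))}(t)$. I would work in
\[
B_0 := \cbr{u \in \C{I}{E} \st u(a) = u_0 \text{ and } \norm{u(t) - u_0}_E \leq \beta \text{ for all } t \in I},
\]
which is non-empty, closed, convex, and bounded. \thmRef{thm_01} together with assumption \eqref{th_03_01_ass_02} and the choice of $\delta$ in \eqref{eq_03_02} gives $\norm{F(u)(t) - u_0}_E \leq M(t-a)^{\alpha}/\Gamma(\alpha+1) \leq \beta$, so $F \st B_0 \to B_0$; the same estimate applied to $F(u)(t) - F(u)(s)$ yields a uniform H\"older modulus on $F(B_0)$. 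Continuity of $F$ on $B_0$ is a standard consequence of the uniform continuity of $f$ on $R$ together with \thmRef{thm_01}.

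Setting $B_{n+1} := \cl{\conv{F(B_n)}}$, each $B_n$ is closed, convex, bounded, pinned to $u_0$ at $t = a$, and inherits the H\"older modulus (preserved under closure and convex combination). Applying \lemmaRef{lm_03} to the uniformly equicontinuous family $\cbr{f(\cdot, u(\cdot)) \st u \in B_n}$, invoking assumption \eqref{th_03_01_ass_03}, and using sublinearity with the singleton property to absorb the $u_0$ translate, yields
\[
\mu(B_{n+1}(t)) \leq \frac{1}{\Gamma(\alpha)} \Lint{a}{t}{(t-s)^{\alpha-1} w_\alpha(s, \mu(B_n(s))) \ds}.
\]
Since $B_{n+1} \subseteq B_n$, the functions $v_n(t) := \mu(B_n(t))$ decrease pointwise to some limit $v$. \lemmaRef{lm_02} with the uniform equicontinuity of $\cbr{B_n}$ supplies a common modulus of continuity for the $v_n$, so $v$ is continuous and (by Dini) $v_n \to v$ uniformly; continuity of $w_\alpha$ and dominated convergence then give $v(t) \leq \frac{1}{\Gamma(\alpha)} \Lint{a}{t}{(t-s)^{\alpha-1} w_\alpha(s, v(s)) \ds}$.

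The main obstacle is verifying the hypothesis $v(t)/(t-a)^\alpha \to 0$ required by axiom \eqref{def_08_ax_03} of \definitionRef{def_08}, and this is where the singleton property becomes essential. Since $F$ preserves the initial value, $B_n(a) = \cbr{u_0}$ for every $n$, whence $\mu(B_n(a)) = 0$. Applying \lemmaRef{lm_02} at $s = a$ with the H\"older modulus from the first step gives $\mu(B_1(t)) \leq \mu(\ball{E}{0_E}{1}) \cdot M(t-a)^\alpha / \Gamma(\alpha+1)$. Feeding this into the recursive inequality at level $n = 2$ and substituting $s = a + r(t-a)$,
\[
\frac{\mu(B_2(t))}{(t-a)^\alpha} \leq \frac{1}{\Gamma(\alpha)} \Lint{0}{1}{(1-r)^{\alpha-1} w_\alpha(a + r(t-a), \mu(B_1(a + r(t-a)))) \, \mathrm{d}r},
\]
which tends to $0$ as $t \to a^+$ by dominated convergence, since $w_\alpha(a, 0) = 0$. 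Because $v(t) \leq \mu(B_2(t))$, the Kamke property then forces $v \equiv 0$.

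Finally, lifting $\mu$ to a measure $\mathbf{M}$ on $\C{I}{E}$ via \lemmaRef{lm_04}, the uniform equicontinuity of $\cbr{B_n}$ kills the modulus term while uniform convergence $v_n \to 0$ kills the supremum term, so $\mathbf{M}(B_n) \to 0$. Axiom \eqref{def_04_ax_06} of \definitionRef{def_04} then supplies a non-empty $B_\infty := \bigcap_n B_n \in \ker \mathbf{M}$, which is thus compact in $\C{I}{E}$. Since $F(B_\infty) \subseteq B_\infty$ by construction and $F$ is continuous, \thmRef{thm_11} yields a fixed point, which by \thmRef{thm_09} is the desired solution of \eqref{eq_03_01}.
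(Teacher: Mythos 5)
Your proposal is correct and follows the same overall architecture as the paper's proof (nested closed convex hulls $B_{n+1}=\cl{\conv{\fF\br{B_n}}}$, the recursion for $v_n\br{t}=\mu\br{B_n\br{t}}$ via \lemmaRef{lm_03} and the Kamke comparison, then \lemmaRef{lm_04}, \axiomRef{def_04_ax_06} of \definitionRef{def_04} and Schauder), but it handles the tangency condition \eqref{eq_02_28} by a genuinely different route. The paper proves $\lim_{t\rightarrow a^+}v_1\br{t}/\br{t-a}^{\alpha}=0$ directly at the first level: it invokes \lemmaRef{lm_01} (the integral mean value lemma with singular kernel) to show $X_1\br{t}\subseteq\cbr{y\br{t}}+\frac{\br{t-a}^{\alpha}}{\Gamma\br{\alpha+1}}\gamma\br{t}\cdot\ball{E}{0_E}{1}$ with $\gamma\br{t}\rightarrow0$, so the singleton property immediately gives the $o\br{\br{t-a}^{\alpha}}$ decay of $\mu\br{X_1\br{t}}$. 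You instead settle for the weaker bound $\mu\br{B_1\br{t}}=O\br{\br{t-a}^{\alpha}}$ at level one (from the H\"older modulus, \lemmaRef{lm_02} and $\mu\br{B_1\br{a}}=\mu\br{\cbr{u_0}}=0$), and then upgrade it to $o\br{\br{t-a}^{\alpha}}$ at level two by running the integral recursion once more, rescaling $s=a+r\br{t-a}$, and using $w_{\alpha}\br{a,0}=0$ together with dominated convergence; since $v\leq v_2$, this suffices. Your route entirely avoids \lemmaRef{lm_01} (one of the two technical lemmas the paper develops specifically for the singular kernel) at the cost of one extra iteration and of leaning on the continuity of $w_{\alpha}$ at $\br{a,0}$; the paper's route isolates where uniform continuity of $f$ near $\br{a,u_0}$ enters. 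Both arguments use the singleton property in an essential way. Two minor points: the recursive inequality via \lemmaRef{lm_03} is only justified for $n\geq1$, since $B_0$ as you define it (without a built-in H\"older condition) is not uniformly equicontinuous and hence neither is $\cbr{f\br{\cdot,u\br{\cdot}}\st u\in B_0}$ --- this is harmless because you only need the recursion from $n=1$ onward; and the H\"older constant produced by the paper's estimate is $2M/\Gamma\br{\alpha+1}$ rather than $M/\Gamma\br{\alpha+1}$, which does not affect anything.
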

	\begin{proof}
	Before providing the proof itself, we firstly briefly describe its basic ideas, which should help clarifying some of the steps while going through the proof. The proof consists of the following main steps
	\begin{enumerate}[1.]
		\item A nested sequence $\cbr{X_k}_{k=1}^\infty$ of non-empty, bounded, closed, convex and uniformly equicontinuous subsets of $C\br{I,E}$ is constructed, with $X_{k+1}\defeq\cl{\conv{\fF\br{X_k}}}$, $X_0$ being chosen appropriately and $\fF$ being an integral operator defined as the RHS of the corresponding Volterra-type integral equation \eqref{eq_02_07}.
		\item The sequence $\cbr{v_k\br{\cdot}}_{k=1}^{\infty}\defeq\cbr{\mu\br{X_k\br{\cdot}}}_{k=1}^{\infty}$ (of real-valued functions of a single real variable) is shown to be uniformly equicontinuous and uniformly bounded on $I$. Thus, using the Arzelà-Ascoli theorem, it is shown that it converges uniformly to some function $v_{\infty}\br{\cdot}\in{C}\br{I,\R}$.
		\item Next, it is verified that $v_{\infty}\br{\cdot}$ satisfies the integral inequality together with the convergence requirement from \axiomRef{def_08_ax_03} of \definitionRef{def_08}, thus $v_{\infty}\equiv0$ on $I$.
		\item Finally, it is shown that the set $X_{\infty}\defeq\bigcap_{k=1}^{\infty}X_k$ together with the mapping $\fF$ satisfy the assumptions of the Schauder fixed point theorem, and thus the corresponding Volterra-type equation \eqref{eq_02_07} has a solution.
	\end{enumerate}
	
	Now, we provide details of these particular steps. Firstly, note that $\eqref{eq_03_02}$ implies
	\begin{equation}\label{eq_03_03}
		\frac{1}{\Gamma\br{\alpha+1}}M\delta^{\alpha}\leq\beta.
	\end{equation}
	
	Define $X_0$ as
	\begin{equation}\label{eq_03_04}
		\begin{aligned}
			X_0\defeq&\biggl\{u\st{u}\in\C{I}{E};\;\norm{u\br{\cdot}-u_0}_{\infty}\leq\beta;\;u\br{a}=u_0;\\
			&\phantom{\biggl\{u}\left.\norm{u\br{t}-u\br{s}}_E\leq\frac{2M}{\Gamma\br{\alpha+1}}\abs{t-s}^{\alpha},\forall{t,s}\in{I}\right\}.
		\end{aligned}
	\end{equation}
	Note that the last condition in \eqref{eq_03_04} states that $u\in{X}_0$ is Hölder continuous, with the Hölder constant being independent of $u$. It is easy to verify that $X_0$ is non-empty, bounded, closed and convex. Moreover, the last condition in \eqref{eq_03_04} immediately implies that the set $X_0$ is uniformly equicontinuous on $I$.
	
	Define the operator $\fF\st{X}_0\rightarrow\C{I}{E}$ as the RHS of the Volterra-type integral equation \eqref{eq_02_07}, i.e. as
	\begin{equation}\label{eq_03_05}
		\br{\fF\br{u}}\br{t}\defeq{u}_0+\frac{1}{\Gamma\br{\alpha}}\Bint{a}{t}{\br{t-s}^{\alpha-1}f\br{s,u\br{s}}\ds},\quad{t}\in{I}.
	\end{equation}
	Note that the definition \eqref{eq_03_05} is correct, since by \theoremRef{thm_05}, $\fF$ maps $\C{I}{E}$ into $\C{I}{E}$. Furthermore, by \theoremRef{thm_09}, it is enough to show that \eqref{eq_03_05} has a fixed point in $\C{I}{E}$.
	
	Firstly, we verify that $\fF\br{X_0}\subseteq{X}_0$. As for the boundedness condition in \eqref{eq_03_04}, let $u\in{X}_0$ and $t\in{I}$ be arbitrary. Then
	\begin{equation}\label{eq_03_06}
		\begin{aligned}
			&\norm{\br{\fF\br{u}}\br{t}-u_0}_E\leqText{\thmRef{thm_01}}\frac{1}{\Gamma\br{\alpha}}\Lint{a}{t}{\br{t-s}^{\alpha-1}\norm{f\br{s,u\br{s}}}_E\,\ds}\\
			&\qquad\leqText{\assumRef{th_03_01_ass_02}}\frac{M}{\Gamma\br{\alpha}}\Lint{a}{t}{\br{t-s}^{\alpha-1}\ds}=\frac{M}{\alpha\cdot\Gamma\br{\alpha}}\br{t-a}^{\alpha}\leq\frac{M\cdot\delta^{\alpha}}{\Gamma\br{\alpha+1}}\leqText{\eqref{eq_03_03}}\beta.
		\end{aligned}
	\end{equation}
	Regarding the initial value condition in \eqref{eq_03_04}, let $u\in{X}_0$ be arbitrary. In the view of \remarkRef{rem_01} and by similar arguments as in \eqref{eq_03_06}, we obtain
	\begin{equation}\label{eq_03_07}
		\begin{aligned}
			\norm{\br{\fF\br{u}}\br{a}-u_0}_E&\leqText{\thmRef{thm_01}}\frac{1}{\Gamma\br{\alpha}}\lim_{\eps\rightarrow0^+}\Lint{a}{a+\eps}{\br{a+\eps-s}^{\alpha-1}\norm{f\br{s,u\br{s}}}_E\,\ds}\\
			&\leqText{\assumRef{th_03_01_ass_02}}\frac{M}{\Gamma\br{\alpha+1}}\lim_{\eps\rightarrow0^+}\eps^{\alpha}=0.
		\end{aligned}
	\end{equation}
	Finally, as for the Hölder continuity condition in \eqref{eq_03_04}, let $u\in{X}_0$ and $t,s\in{I}$ such that ${t}\leq{s}$ be arbitrary. Then
	\begin{equation*}
		\begin{aligned}
			\Gamma\br{\alpha}&\cdot\norm{\br{\fF\br{u}}\br{t}-\br{\fF\br{u}}\br{s}}_E\\
			&=\norm{\Bint{a}{t}{\br{t-\tau}^{\alpha-1}f\br{\tau,u\br{\tau}}\dtau}-\Bint{a}{s}{\br{s-\tau}^{\alpha-1}f\br{\tau,u\br{\tau}}\dtau}}_E\\
			&\leq\norm{\Bint{a}{t}{\br{\br{t-\tau}^{\alpha-1}-\br{s-\tau}^{\alpha-1}}f\br{\tau,u\br{\tau}}\dtau}}_E+\\
			&\qquad\qquad+\norm{\Bint{t}{s}{\br{s-\tau}^{\alpha-1}f\br{\tau,u\br{\tau}}\dtau}}_E\\
			&\leqText{\thmRef{thm_01}\,\&\,\assumRef{th_03_01_ass_02}}M\left(\Lint{a}{t}{\br{\br{t-\tau}^{\alpha-1}-\br{s-\tau}^{\alpha-1}}\dtau}\right.+\\
			&\qquad\qquad+\Lint{t}{s}{\br{s-\tau}^{\alpha-1}\dtau}\biggr)\\
			&=\frac{M}{\alpha}\br{\br{t-a}^{\alpha}-\br{s-a}^{\alpha}+2\br{s-t}^{\alpha}}\\
			&\leqText{t\leq{s}}\frac{2M}{\alpha}\br{s-t}^{\alpha}=\Gamma\br{\alpha}\cdot\frac{2M}{\Gamma\br{\alpha+1}}\abs{s-t}^{\alpha}.
		\end{aligned}
	\end{equation*}
	Thus, $\fF\br{X_0}\subseteq{X}_0$. Now, define $\cbr{X_k}_{k=1}^{\infty}$ as
	\begin{equation}\label{eq_03_08}
		X_{k+1}\defeq\cl{\conv{\fF\br{X_k}}},\quad{k}=0,1,\dots
	\end{equation}
	Since $\fF\br{X_0}\subseteq{X_0}$ and $X_0$ is closed and convex, it is easy to inductively verify that
	\begin{equation}\label{eq_03_09}
		X_{k+1}\subseteq{X}_k,\quad{k}=0,1,\dots
	\end{equation}
	
	Define $v_k\st{I}\rightarrow\lsrbr{0,\infty}$ as
	\begin{equation}\label{eq_03_10}
		v_k\br{t}\defeq\mu\br{X_k\br{t}},\quad{t}\in{I},\quad{k}=0,1,\dots
	\end{equation}
	Directly from the \axiomRef{def_04_ax_03} of \definitionRef{def_04} and in the view of \eqref{eq_03_09}, for $k=0,1,\dots$ and for all $t\in{I}$ it follows that
	\begin{equation}\label{eq_03_11}
		0\leq{v}_{k+1}\br{t}\leq{v}_k\br{t}.
	\end{equation}
	Now, we verify that the family $\cbr{v_k\br{\cdot}}_{k=0}^\infty$ is uniformly equicontinuous on $I$. Let $k=0,1,\dots$ and $t,s\in{I}$ be arbitrary. Then
	\begin{equation*}
		\begin{aligned}
			\abs{v_k\br{t}-v_k\br{s}}&\leqText{\lmRef{lm_02}}\mu\br{\ball{E}{0_E}{1}}\cdot\sup_{x\in{X_k}}\,\sup_{\tau_1,\tau_2\in{I}\,\st\abs{\tau_1-\tau_2}\leq\abs{t-s}}\norm{x\br{\tau_1}-x\br{\tau_2}}_E\\
			&\leqText{X_k\subseteq{X}_0}\mu\br{\ball{E}{0_E}{1}}\cdot\sup_{x\in{X_0}}\,\sup_{\tau_1,\tau_2\in{I}\,\st\abs{\tau_1-\tau_2}\leq\abs{t-s}}\norm{x\br{\tau_1}-x\br{\tau_2}}_E.
		\end{aligned}
	\end{equation*}
	The uniform equicontinuity of $\cbr{v_k}_{k=0}^\infty$ (in the space $\C{I}{\R}$) follows directly from the uniform equicontinuity of $X_0$ (in the space $\C{I}{E}$). Moreover, since $v_0$ is continuous on the compact interval $I$, it is bounded on $I$, and thus in the view of \eqref{eq_03_11}, the family $\cbr{v_k}_{k=0}^\infty$ is uniformly bounded on $I$.
	
	Therefore, from \theoremRef{thm_10}, we get that $\cbr{v_k\br{\cdot}}_{k=0}^\infty$ is relatively compact in $\C{I}{\R}$. Thus, in the view of \eqref{eq_03_11}, it converges uniformly to some $v_{\infty}\br{\cdot}\geq0$, i.e.
	\begin{equation}\label{eq_03_12}
		\exists{v}_{\infty}\in\C{I}{\R}\st{v}_k\rightrightarrows{v}_{\infty}\geq0.
	\end{equation}
	
	Let us verify that $v_{\infty}$ satisfies the inequality \eqref{eq_02_27}. Firstly, note that it can be easily verified that for a uniformly equicontinuous family $H\subseteq\C{I}{E}$, the family $f\br{\cdot,H\br{\cdot}}=\cbr{f\br{\cdot,u\br{\cdot}}\st{u}\in{H}}\subseteq\C{I}{E}$ is also uniformly equicontinuous, due to the uniform continuity of $f$. Let $k=0,1,\dots$ and $t\in{I}$ be arbitrary. Then
	\begin{equation}\label{eq_03_13}
		\begin{aligned}
			v_{k+1}\br{t}&\eqText{\eqref{eq_03_08}\,\&\,\eqref{eq_03_10}}\mu\br{\cl{\conv{\fF\br{X_k\br{t}}}}}\eqText{\defRef{def_04}}\mu\br{\fF\br{X_k\br{t}}}\\
			&\eqText{\eqref{eq_03_05}}\mu\br{u_0+\frac{1}{\Gamma\br{\alpha}}\Bint{a}{t}{\br{t-s}^{\alpha-1}f\br{s,X_k\br{s}}\ds}}\\
			&\leqText{\defRef{def_05}\,\&\,\defRef{def_06}}\frac{1}{\Gamma\br{\alpha}}\cdot\mu\br{\Bint{a}{t}{\br{t-s}^{\alpha-1}f\br{s,X_k\br{s}}\ds}}\\
			&\leqText{\lmRef{lm_03}}\frac{1}{\Gamma\br{\alpha}}\cdot\Lint{a}{t}{\br{t-s}^{\alpha-1}\cdot\mu\br{f\br{s,X_k\br{s}}}\ds}\\
			&\leqText{\assumRef{th_03_01_ass_03}\,\&\,\eqref{eq_03_10}}\frac{1}{\Gamma\br{\alpha}}\cdot\Lint{a}{t}{\br{t-s}^{\alpha-1}{w}_{\alpha}\br{s,v_k\br{s}}\ds}.
		\end{aligned}
	\end{equation}
	Taking the (pointwise) limit in \eqref{eq_03_13}, we obtain
	\begin{equation}\label{eq_03_14}
		\begin{aligned}
			v_{\infty}\br{t}&\leq\lim_{k\rightarrow\infty}\frac{1}{\Gamma\br{\alpha}}\cdot\Lint{a}{t}{\br{t-s}^{\alpha-1}{w}_{\alpha}\br{s,v_k\br{s}}\ds}\\
			&=\frac{1}{\Gamma\br{\alpha}}\cdot\Lint{a}{t}{\br{t-s}^{\alpha-1}\cdot\lim_{k\rightarrow\infty}{w}_{\alpha}\br{s,v_k\br{s}}\ds}\\
			&\eqText{w\br{s,\cdot}\text{ is continuous}}\frac{1}{\Gamma\br{\alpha}}\cdot\Lint{a}{t}{\br{t-s}^{\alpha-1}{w}_{\alpha}\br{s,v_{\infty}\br{s}}\ds}.
		\end{aligned}
	\end{equation}
	Note that the exchange of the limit and integral in \eqref{eq_03_14} can be justified as follows
	\begin{equation}\label{eq_03_15}
		\begin{aligned}
			&\abs{\lim_{k\rightarrow\infty}\Lint{a}{t}{\br{t-s}^{\alpha-1}w_{\alpha}\br{s,v_k\br{s}}\ds}-\Lint{a}{t}{\br{t-s}^{\alpha-1}\cdot\lim_{k\rightarrow\infty}w_{\alpha}\br{s,v_k\br{s}}\ds}}\\
			&\qquad\qquad\leq\lim_{k\rightarrow\infty}\Lint{a}{t}{\br{t-s}^{\alpha-1}\cdot\abs{w_{\alpha}\br{s,v_k\br{s}}-w_{\alpha}\br{s,v_{\infty}\br{s}}}\,\ds}\\
			&\qquad\qquad\leq\frac{\br{t-a}^{\alpha}}{\alpha}\cdot\lim_{k\rightarrow\infty}\max_{a\leq{s}\leq{t}}\abs{w_{\alpha}\br{s,v_k\br{s}}-w_{\alpha}\br{s,v_{\infty}\br{s}}}=0,
		\end{aligned}
	\end{equation}
	where the last equality in \eqref{eq_03_15} follows immediately from $w_{\alpha}\br{\cdot,v_k\br{\cdot}}\rightrightarrows{w}_{\alpha}\br{\cdot,v_{\infty}\br{\cdot}}$ on $I$, which can be easily verified with the use of the uniform convergence $v_k\rightrightarrows{v_{\infty}}$ on $I$ and the continuity of $w_{\alpha}$.
	
	Now, we verify that $v_{\infty}$ satisfies also the condition \eqref{eq_02_28}. Note that due to \eqref{eq_03_11} and \eqref{eq_03_12}, it is enough to show that
	\begin{equation}\label{eq_03_16}
		\lim_{t\rightarrow{a}^+}\frac{v_1\br{t}}{\br{t-a}^{\alpha}}=0.
	\end{equation}
	Define $y\br{\cdot}$ as
	\begin{equation*}
		y\br{t}\defeq{u}_0+\frac{1}{\Gamma\br{\alpha+1}}\br{t-a}^{\alpha}\cdot{f}\br{a,u_0},\quad{t}\in{I}.
	\end{equation*}
	Let $x_1\in\fF\br{X_0}$ be arbitrary, i.e.
	\begin{equation*}
		x_1\br{t}=u_0+\frac{1}{\Gamma\br{\alpha}}\Bint{a}{t}{\br{t-s}^{\alpha-1}\cdot{f}\br{s,u\br{s}}\ds},\quad{t}\in{I},
	\end{equation*}
	for some $u\in{X}_0$. Note that it can be easily verified that for any $X\subseteq{E}$, it follows that $\sup_{x\in\cl{\conv{X}}}\norm{x}_E=\sup_{x\in{X}}\norm{x}_E$. Now, let $t\in{I}$ be arbitrary. Then
	\begin{equation}\label{eq_03_17}
		\begin{aligned}
			&\norm{y\br{t}-x_1\br{t}}_E\\
			&\qquad=\norm{\frac{1}{\Gamma\br{\alpha+1}}\br{t-a}^{\alpha}\cdot{f}\br{a,u_0}-\frac{1}{\Gamma\br{\alpha}}\Bint{a}{t}{\br{t-s}^{\alpha-1}\cdot{f}\br{s,u\br{s}}\ds}}_E\\
			&\qquad\leqText{\lmRef{lm_01}}\!\sup_{z\in\cl{\conv{\cbr{f\br{s,u\br{s}}\,\st{s}\in\sbr{a,t}}}}}\bigg\|\frac{1}{\Gamma\br{\alpha+1}}\br{t-a}^{\alpha}f\br{a,u_0}-\\
			&\qquad\qquad\qquad-\frac{1}{\Gamma\br{\alpha}}\Lint{a}{t}{\br{t-s}^{\alpha-1}\ds}\cdot{z}\bigg\|_E\\
			&\qquad=\frac{1}{\Gamma\br{\alpha+1}}\br{t-a}^{\alpha}\cdot\sup_{s\in\sbr{a,t}}\norm{f\br{a,u_0}-f\br{s,u\br{s}}}_E.
		\end{aligned}
	\end{equation}
	Define $\gamma\br{\cdot}$ as
	\begin{equation*}
		\gamma\br{t}\defeq\sup_{u\in{X}_0}\,\sup_{s\in\sbr{a,t}}\norm{f\br{a,u_0}-f\br{s,u\br{s}}}_E,\quad{t}\in{I}.
	\end{equation*}
	From \eqref{eq_03_17}, it is obvious that for all ${x_1\in\fF\br{X_0}}$ and for all $t\in{I}$ it follows that
	\begin{equation}\label{eq_03_18}
		\norm{y\br{t}-x_1\br{t}}_E\leq\frac{1}{\Gamma\br{\alpha+1}}\br{t-a}^{\alpha}\cdot\gamma\br{t}.
	\end{equation}
	Due to the Hölder continuity condition in \eqref{eq_03_04} and the uniform continuity of $f$, it can be easily shown that
	\begin{equation}\label{eq_03_19}
		\lim_{t\rightarrow{a}^+}\gamma\br{t}=0.
	\end{equation}
	
	Notice that if we take $x_1\in\conv{\fF\br{X_0}}$ instead of $x_1\in\fF\br{X_0}$, the conclusion \eqref{eq_03_18} still holds. Similarly, due to the continuity of $\norm{\cdot}_E$, the conclusion \eqref{eq_03_18} holds also for $x_1\in\cl{\conv{\fF\br{X_0}}}=X_1$. Thus, for all $t\in{I}$ it holds that
	\begin{equation}\label{eq_03_20}
		\begin{aligned}
			X_1\br{t}&\subseteq\ball{E}{y\br{t}}{\frac{1}{\Gamma\br{\alpha+1}}\br{t-a}^{\alpha}\cdot\gamma\br{t}}\\
			&=\cbr{y\br{t}}+\frac{1}{\Gamma\br{\alpha+1}}\br{t-a}^\alpha\cdot\gamma\br{t}\cdot\ball{E}{0_E}{1}.
		\end{aligned}
	\end{equation}
	Moreover, \eqref{eq_03_20} implies that
	\begin{equation}\label{eq_03_21}
		v_1\br{t}\eqText{\eqref{eq_03_10}}\mu\br{X_1\br{t}}\leqText{\defRef{def_05}\,\&\,\defRef{def_06}}\frac{1}{\Gamma\br{\alpha+1}}\br{t-a}^{\alpha}\cdot\gamma\br{t}\cdot\mu\br{\ball{E}{0_E}{1}}.
	\end{equation}
	From \eqref{eq_03_19} and \eqref{eq_03_21}, we immediately obtain \eqref{eq_03_16}.
	
	Therefore, due to \eqref{eq_03_14} and \eqref{eq_03_16}, \definitionRef{def_08} yields that
	\begin{equation}\label{eq_03_22}
		v_{\infty}\equiv0\text{ on }I.
	\end{equation}
	
	To summarise, we have obtained a sequence $\cbr{X_k}_{k=1}^\infty$ (recall the definition \eqref{eq_03_08}) of subsets of $\C{I}{E}$, each of them being non-empty, bounded, closed and convex (the first two following from the fact that $X_0$ is non-empty and bounded, the last two following directly from the definition \eqref{eq_03_08}). Moreover, $\cbr{X_k}_{k=1}^{\infty}$ satisfy \eqref{eq_03_09} and due to \eqref{eq_03_12} and \eqref{eq_03_22} also
	\begin{equation}\label{eq_03_23}
		\mu\br{X_k\br{\cdot}}\rightrightarrows0\text{ on }I.
	\end{equation}
	
	Consider the set $X_{\infty}$ defined as
	\begin{equation}\label{eq_03_24}
		X_{\infty}\defeq\bigcap_{k=1}^{\infty}X_k.
	\end{equation}
	It is obvious that $X_{\infty}$ is bounded, closed and convex (the last two following from the fact that the intersection of an arbitrary number of closed and convex sets is closed and convex).
	
	Consider the measure of non-compactness $M$ in the space $\C{I}{E}$, which is defined as in \lemmaRef{lm_04}. Let $k=1,2,\dots$ be arbitrary. Then
	\begin{equation}\label{eq_03_25}
		M\br{X_k}=\lim_{\eps\rightarrow0^+}\sup_{x\in{X}_k}\,\sup_{t,s\in{I}\,\st\abs{t-s}\leq\eps}\norm{x\br{t}-x\br{s}}_E+\sup_{t\in{I}}\br{\mu\br{X_k\br{t}}}.
	\end{equation}
	The first term on the RHS of \eqref{eq_03_25} is identically $0$, since the set $X_0\supseteq{X}_k$ is uniformly equicontinuous. Thus, from \eqref{eq_03_23} it directly follows that
	\begin{equation*}
		M\br{X_k}=\sup_{t\in{I}}\br{\mu\br{X_k\br{t}}}\xrightarrow{k\rightarrow{\infty}}0.
	\end{equation*}
	Therefore, by \axiomRef{def_04_ax_06} of \definitionRef{def_04}, $X_{\infty}\neq\emptyset$ and (in the view of \axiomRef{def_04_ax_01} of \definitionRef{def_04}) $X_{\infty}$ is relatively compact in $\C{I}{E}$.
	
	Furthermore, $\fF\br{X_{\infty}}\subseteq{X}_{\infty}$. Indeed, let $u\in{X}_{\infty}$ be arbitrary. By \eqref{eq_03_24} and the fact that for all $k$, $\fF\br{X_k}\subseteq{X}_k$ (see \eqref{eq_03_09}), we have that $\fF\br{u}\in{X}_k$, for all $k$. Thus, $\fF\br{u}\in{X}_{\infty}$. Moreover, since $X_{\infty}$ is relatively compact and $\fF\br{X_{\infty}}\subseteq{X}_{\infty}$, we obtain that $\fF\br{X_{\infty}}$ is relatively compact (in $\C{I}{E}$) as well.
	
	Now, we show that the mapping $\fF\st{X}_{\infty}\rightarrow{X}_{\infty}$ is continuous. Let $\cbr{y_k}_{k=1}^{\infty}\subseteq{X}_{\infty}$ be arbitrary such that $y_k\xrightarrow{\norm{\cdot}_{\infty}}y_0\in{X}_{\infty}$. By the Heine criterion, it is enough to show that $\fF\br{y_k}\xrightarrow{\norm{\cdot}_{\infty}}\fF\br{y_0}$. It follows that
	\begin{equation*}
		\begin{aligned}
			&\norm{\fF\br{y_k}-\fF\br{y_0}}_{\infty}=\sup_{t\in{I}}\norm{\frac{1}{\Gamma\br{\alpha}}\Bint{a}{t}{\br{t-s}^{\alpha-1}\br{f\br{s,y_k\br{s}}-f\br{s,y_0\br{s}}}\ds}}_E\\
			&\qquad\qquad\leqText{\thmRef{thm_01}}\sup_{t\in{I}}\norm{f\br{t,y_k\br{t}}-f\br{t,y_0\br{t}}}_E\cdot\sup_{t\in{I}}\br{\frac{1}{\Gamma\br{\alpha}}\Lint{a}{t}{\br{t-s}^{\alpha-1}\ds}}\\
			&\qquad\qquad=\frac{1}{\Gamma\br{\alpha+1}}\delta^{\alpha}\cdot\sup_{t\in{I}}\norm{f\br{t,y_k\br{t}}-f\br{t,y_0\br{t}}}_E\xrightarrow{k\rightarrow\infty}0,
		\end{aligned}
	\end{equation*}
	where the last limit is due to the fact that $f$ is uniformly continuous and $y_k\xrightarrow{\norm{\cdot}_{\infty}}y_0$.
	
	Finally, we can apply \theoremRef{thm_11} (in the Banach space $\C{I}{E}$), to conclude that the mapping $\fF$ has a fixed point in $X_{\infty}$. In the view of \theoremRef{thm_09}, this fixed point is a solution of \eqref{eq_03_01}. Hence proved.
	\end{proof}
	
	Next, we provide a basic existence and uniqueness theorem. Before moving on, we encourage the reader to remind themselves the notation from the very beginning of this section, namely that $\sI=\sbr{a,a+\sdelta}$, $R=\sI\times\ball{E}{u_0}{\beta}$ and $f\st{R}\rightarrow{E}$.
	\begin{theorem}
		Consider the IVP \eqref{eq_03_01}. Assume that
		\begin{enumerate}[(i)]
			\item $f$ is continuous on $R$.
			\item $\norm{f\br{t,u}}_E\leq{M}$, for all $\br{t,u}\in{R}$, for some $M>0$.
			\item $\norm{f\br{t,u}-f\br{t,v}}_E\leq\kappa\cdot\norm{u-v}_E$, for all $t\in\sI$, for all $u,v\in\ball{E}{u_0}{\beta}$, for some $\kappa>0$.\label{th_03_02_ass_03}
		\end{enumerate}
		Then there exists a unique solution of \eqref{eq_03_01} on $I\defeq\sbr{a,a+\delta}$, with $\delta$ being defined as
		\begin{equation}\label{eq_03_26}
			\delta\defeq\min\cbr{\sdelta,\br{\frac{\beta}{M}\cdot\Gamma\br{\alpha+1}}^{\frac{1}{\alpha}},\br{\frac{C}{\kappa}\cdot\Gamma\br{\alpha+1}}^{\frac{1}{\alpha}}},
		\end{equation}
		where $0<C<1$ is an arbitrary constant.
	\end{theorem}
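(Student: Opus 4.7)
The plan is to follow the classical Picard-Lindelöf strategy, adapted to the fractional integral formulation provided by \theoremRef{thm_09}. First, the IVP \eqref{eq_03_01} is reformulated as the Volterra-type equation $u\br{t}=u_0+\JRL{\alpha}{a}{f\br{\cdot,u\br{\cdot}}}\br{t}$, and I introduce the operator
\begin{equation*}
\br{\fF\br{u}}\br{t}\defeq{u}_0+\frac{1}{\Gamma\br{\alpha}}\Bint{a}{t}{\br{t-s}^{\alpha-1}f\br{s,u\br{s}}\ds},\quad{t}\in{I},
\end{equation*}
acting on the set $X\defeq\cbr{u\in\C{I}{E}\st\norm{u\br{\cdot}-u_0}_{\infty}\leq\beta,\,u\br{a}=u_0}$. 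Since $X$ is a non-empty, closed subset of the Banach space $\br{\C{I}{E},\norm{\cdot}_{\infty}}$, it is itself a complete metric space, which is the appropriate setting for the Banach contraction principle.

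The first step is to verify the self-mapping property $\fF\br{X}\subseteq{X}$. Using \theoremRef{thm_01} and assumption (ii), for every $u\in{X}$ and $t\in{I}$ one obtains
\begin{equation*}
\norm{\br{\fF\br{u}}\br{t}-u_0}_E\leq\frac{M}{\Gamma\br{\alpha}}\Lint{a}{t}{\br{t-s}^{\alpha-1}\ds}=\frac{M\br{t-a}^{\alpha}}{\Gamma\br{\alpha+1}}\leq\frac{M\delta^{\alpha}}{\Gamma\br{\alpha+1}}\leq\beta,
\end{equation*}
where the final inequality is precisely what the second term in the minimum of \eqref{eq_03_26} guarantees. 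The condition $\br{\fF\br{u}}\br{a}=u_0$ is verified exactly as in \eqref{eq_03_07}, while $\fF\br{u}\in\C{I}{E}$ follows from \theoremRef{thm_05}.

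Next, I would establish that $\fF$ is a strict contraction on $X$. Using \theoremRef{thm_01} together with the Lipschitz assumption (iii), for any $u,v\in{X}$ and $t\in{I}$,
\begin{equation*}
\norm{\br{\fF\br{u}}\br{t}-\br{\fF\br{v}}\br{t}}_E\leq\frac{\kappa}{\Gamma\br{\alpha}}\Lint{a}{t}{\br{t-s}^{\alpha-1}\norm{u\br{s}-v\br{s}}_E\,\ds}\leq\frac{\kappa\delta^{\alpha}}{\Gamma\br{\alpha+1}}\cdot\norm{u-v}_{\infty}\leq{C}\cdot\norm{u-v}_{\infty},
\end{equation*}
where the last inequality follows from the third term in the minimum of \eqref{eq_03_26}. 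Taking the supremum over $t\in{I}$ yields $\norm{\fF\br{u}-\fF\br{v}}_{\infty}\leq{C}\cdot\norm{u-v}_{\infty}$, and since $0<C<1$, the map $\fF$ is a strict contraction on the complete metric space $X$.

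The Banach fixed point theorem then produces a unique $u\in{X}$ with $\fF\br{u}=u$, which by \theoremRef{thm_09} is a solution of the IVP \eqref{eq_03_01} on $I$. Uniqueness within the entire class of solutions (rather than only within $X$) is automatic: any solution $u\in\C{I}{E}$ of \eqref{eq_03_01} must satisfy $u\br{t}\in\ball{E}{u_0}{\beta}$ for all $t\in{I}$ simply because $f\br{t,u\br{t}}$ needs to be defined, and therefore lies in $X$. There is no genuine obstacle in this proof; the entire argument is a direct fractional analogue of the classical Picard-Lindelöf theorem, and the only point requiring care is the choice of $\delta$ ensuring simultaneously the self-mapping estimate $M\delta^{\alpha}/\Gamma\br{\alpha+1}\leq\beta$ and the contraction estimate $\kappa\delta^{\alpha}/\Gamma\br{\alpha+1}\leq{C}<1$.
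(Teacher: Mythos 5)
Your proposal is correct and follows essentially the same route as the paper: reformulate via \theoremRef{thm_09}, show the integral operator maps the closed ball $X$ into itself using assumption (ii) and the second term of \eqref{eq_03_26}, show it is a $C$-contraction using assumption (iii) and the third term, and apply the Banach fixed point theorem. Your additional remark that every solution automatically lies in $X$ (since $f$ is only defined on $R$) is a welcome clarification of the uniqueness claim that the paper leaves implicit.
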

	\begin{proof}
	Note that \eqref{eq_03_26} implies, apart from \eqref{eq_03_03}, also that
	\begin{equation}\label{eq_03_27}
		\frac{1}{\Gamma\br{\alpha+1}}\kappa\delta^{\alpha}\leq{C}.
	\end{equation}
	
	Define $X$ as
	\begin{equation*}
		X\defeq\cbr{u\st{u}\in\C{I}{E};\norm{u\br{\cdot}-u_0}_{\infty}\leq\beta}.
	\end{equation*}
	Consider the operator $\fF\st{X}\rightarrow\C{I}{E}$ defined by \eqref{eq_03_05}. By the same arguments as in \eqref{eq_03_06}, we readily obtain that $\fF\br{X}\subseteq{X}$.
	
	Now, let us verify that $\fF$ is a contraction. Let $u,v\in{X}$ be arbitrary. Then
	\begin{equation*}
		\begin{aligned}
			&\norm{\fF\br{u}-\fF\br{v}}_{\infty}\\
			&\qquad\qquad\leqText{\thmRef{thm_01}}\frac{1}{\Gamma\br{\alpha}}\cdot\sup_{t\in{I}}\br{\Lint{a}{t}{\br{t-s}^{\alpha-1}\norm{f\br{s,u\br{s}}-f\br{s,v\br{s}}}_E\,\ds}}\\
			&\qquad\qquad\leqText{\assumRef{th_03_02_ass_03}}\frac{1}{\Gamma\br{\alpha}}\kappa\cdot\sup_{t\in{I}}\br{\Lint{a}{t}{\br{t-s}^{\alpha-1}\norm{u\br{s}-v\br{s}}_E\,\ds}}\\
			&\qquad\qquad\leq\frac{1}{\Gamma\br{\alpha+1}}\kappa\cdot\norm{u-v}_{\infty}\cdot\sup_{t\in{I}}\br{t-a}^{\alpha}\leqText{\eqref{eq_03_27}}C\norm{u-v}_{\infty}.
		\end{aligned}
	\end{equation*}
	Since $0<C<1$, $\fF$ is indeed a contraction.
	
	Hence, by the Banach fixed point theorem, existence of the desired unique solution follows.
	\end{proof}

	\section{Applications}\label{sec_04}
	\setcounter{section}{4}
	\setcounter{equation}{0}
	\subsection{Kamke function of order $\alpha$}
	The first problem that arises in applications of \theoremRef{thm_14} is to find a suitable Kamke function of order $\alpha$ (recall the \definitionRef{def_08}). It is easy to verify that $w_{\alpha}\br{t,s}=H\cdot{s}$, for some constant $H>0$, satisfies the definition. This follows directly from Lemma $6.19$ in \cite{Diethelm}, which is a Grönwall-type inequality. The next theorem is an extension of this result to the superlinear case.
	
	\begin{theorem}\label{thm_16}
		Let $H>0$ and $\lambda\geq1$. Then
		\begin{equation*}
			w_{\alpha}\br{t,s}\defeq{H}\cdot{s}^{\lambda},\quad{t}\in\sbr{a,b},\,s\in\lsrbr{0,\infty},
		\end{equation*}
		is a Kamke function of order $\alpha$ on $\sbr{a,b}$.
	\end{theorem}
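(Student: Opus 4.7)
The plan is to verify the three defining conditions of \definitionRef{def_08}. Conditions (i) (continuity) and (ii) ($w_\alpha\br{t,0}=0$) are immediate from the explicit form $w_\alpha\br{t,s}=H\cdot s^\lambda$, so the whole effort lies in \axiomRef{def_08_ax_03} of \definitionRef{def_08}. I would split on the value of $\lambda$: for $\lambda=1$ the integral inequality \eqref{eq_02_27} is a linear fractional Gr\"onwall inequality, and Lemma $6.19$ in \cite{Diethelm} (cited by the author just before the statement) immediately forces $u\equiv 0$ on $\sbr{a,b}$ with no use of \eqref{eq_02_28}. The real work lies in the superlinear case $\lambda>1$.

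For $\lambda>1$, take any non-negative continuous $u$ satisfying \eqref{eq_02_27} and \eqref{eq_02_28}. Evaluating \eqref{eq_02_27} at $t=a$ immediately yields $u\br{a}=0$. I would then introduce the non-decreasing, continuous auxiliary function $M\br{t}\defeq\max_{a\leq s\leq t}u\br{s}$, which satisfies $M\br{t}\to 0$ as $t\to a^+$ by continuity of $u$. Bounding $u\br{s}^\lambda\leq M\br{t}^\lambda$ in the integrand of \eqref{eq_02_27} and using $\Lint{a}{t}{\br{t-s}^{\alpha-1}\,\ds}=\br{t-a}^\alpha/\alpha$ yields
\[
u\br{t}\leq\frac{H\br{t-a}^\alpha}{\Gamma\br{\alpha+1}}\cdot M\br{t}^\lambda,
\]
after which taking the supremum over $\sbr{a,t}$ on the left gives the self-improving inequality $M\br{t}\leq\frac{H\br{t-a}^\alpha}{\Gamma\br{\alpha+1}}M\br{t}^\lambda$. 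If $M\br{t}>0$ for some $t$ arbitrarily close to $a$, dividing by $M\br{t}$ and using $\lambda>1$ would force $M\br{t}^{\lambda-1}\geq\Gamma\br{\alpha+1}/\sbr{H\br{t-a}^\alpha}$, so $M\br{t}\to\infty$ as $t\to a^+$, contradicting $M\br{t}\to 0$. Hence $u\equiv 0$ on some initial interval $\sbr{a,a+\delta_0}$.

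To propagate vanishing across $\sbr{a,b}$, I would set $t^*\defeq\sup\cbr{\tau\in\sbr{a,b}\st u\equiv 0\text{ on }\sbr{a,\tau}}\geq a+\delta_0$ and assume for contradiction that $t^*<b$. Continuity gives $u\br{t^*}=0$, and since $u$ vanishes on $\sbr{a,t^*}$, inequality \eqref{eq_02_27} reduces for $t>t^*$ to an integral over $\sbr{t^*,t}$ only. Rerunning the previous paragraph's $M$-argument with $a$ replaced by $t^*$ would then produce a $\delta_1>0$ with $u\equiv 0$ on $\sbr{t^*,t^*+\delta_1}$, contradicting the choice of $t^*$; hence $t^*=b$ and $u\equiv 0$ on $\sbr{a,b}$.

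The main obstacle is that for $\lambda>1$ the inequality \eqref{eq_02_27} is genuinely nonlinear, blocking any direct Gr\"onwall argument; the saving trick is the self-improving bound $M\leq C\br{t-a}^\alpha M^\lambda$, whose only solution consistent with $M\br{t}\to 0$ at the endpoint is $M\equiv 0$ locally, after which a standard connectedness argument globalises the conclusion. Notably, the decay condition \eqref{eq_02_28} is never actually used in the $\lambda>1$ case; the much weaker $u\br{a}=0$ (automatic from \eqref{eq_02_27} itself) already suffices.
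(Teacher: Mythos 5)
Your proof is correct, but it takes a genuinely different route from the paper's. The paper argues by comparison with a perturbed scalar IVP: for each $\eps>0$ it takes the unique solution $u_{\eps}$ of $\DC{\alpha}{a}{y}\br{t}=H\cdot\br{y\br{t}}^{\lambda}$, $y\br{a}=\eps$ (existence and uniqueness from Theorem $6.5$ in \cite{Diethelm}, the equivalent Volterra form from Lemma $6.2$ there), shows by a first-crossing-point argument that $u<u_{\eps}$ on all of $\sbr{a,b}$, and then invokes the continuous-dependence estimate from the proof of Theorem $6.20$ in \cite{Diethelm} to get $\sup_{\sbr{a,b}}u_{\eps}\leq{A}\cdot\eps$, whence $u\equiv0$. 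You replace all of this machinery by the elementary self-improving bound $M\br{t}\leq\frac{H\br{t-a}^{\alpha}}{\Gamma\br{\alpha+1}}\cdot\br{M\br{t}}^{\lambda}$ for the running maximum $M$ (the passage from the pointwise bound on $u\br{t}$ to the bound on $M\br{t}$ uses that both $\br{t-a}^{\alpha}$ and $M$ are non-decreasing, which you leave implicit but which is fine), so that either $M$ vanishes on an initial interval or the forced lower bound $\br{M\br{t}}^{\lambda-1}\geq\Gamma\br{\alpha+1}/\br{H\br{t-a}^{\alpha}}$ contradicts $M\br{t}\rightarrow0$; a standard continuation argument then globalises the conclusion. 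Your approach is self-contained, needing none of the scalar fractional existence/uniqueness/continuous-dependence theory, and your case split is actually unnecessary: for $\lambda=1$ the same division yields $1\leq{H}\br{t-a}^{\alpha}/\Gamma\br{\alpha+1}$, false near $a$, so the $M$-argument covers all $\lambda\geq1$ uniformly (and visibly breaks for $\lambda<1$, exactly where it should). Both proofs establish the stronger statement that $u\equiv0$ is the only non-negative continuous solution of \eqref{eq_02_27}, with \eqref{eq_02_28} never used; what the paper's comparison route buys is closer alignment with classical Kamke comparison theory and easier adaptation to right-hand sides $w_{\alpha}$ other than $H\cdot{s}^{\lambda}$ for which an explicit comparison IVP is tractable.
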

	\begin{proof}
	The first two axioms of \definitionRef{def_08} are obvious, thus it suffices to verify the last one. We will show that the only non-negative continuous solution of \eqref{eq_02_27} is $u\equiv0$.
	
	Let $u\st\sbr{a,b}\rightarrow\lsrbr{0,\infty}$ be a continuous function, which for all $t\in\sbr{a,b}$ satisfies
	\begin{equation}\label{eq_04_01}
		u\br{t}\leq\frac{1}{\Gamma\br{\alpha}}\cdot\Lint{a}{t}{\br{t-s}^{\alpha-1}\cdot{H}\cdot\br{u\br{s}}^{\lambda}\ds}.
	\end{equation}
	For $\eps>0$, consider the following IVP
	\begin{subequations}\label{eq_04_02}
		\begin{align}
			\DC{\alpha}{a}{y}\br{t}&=H\cdot\br{y\br{t}}^{\lambda},\quad{t}\in\sbr{a,b},\label{eq_04_02_01}\\
			y\br{a}&=\eps.
		\end{align}
	\end{subequations}
	Note that \eqref{eq_04_02} is a special case of the IVP described in \sectionRef{sec_02_04}, namely \eqref{eq_02_06} in the space $E=\R$. Apparently, all the theory described in \sectionRef{sec_02_03} and \sectionRef{sec_02_04} holds. However, we will use some specific results for the space $\R$, which can be found in \cite{Diethelm}.
	
	By Theorem $6.5$ in \cite{Diethelm}, \eqref{eq_04_02} has a unique solution on $\sbr{a,b}$ (due to the fact that $f\br{t,s}\defeq{H}\cdot{s}^{\lambda}$ defined on a compact set is Lipschitz continuous in the second variable, with the Lipschitz constant being independent of the first variable). For a given $\eps>0$, denote the unique solution of \eqref{eq_04_02} as $u_{\eps}\br{\cdot}$.
	
	By Lemma $6.2$ in \cite{Diethelm} (which is actually \theoremRef{thm_09} in the space $E=\R$), for all $t\in\sbr{a,b}$ and $\eps>0$ it holds that
	\begin{equation}\label{eq_04_03}
		u_{\eps}\br{t}=\eps+\frac{H}{\Gamma\br{\alpha}}\Lint{a}{t}{\br{t-s}^{\alpha-1}\cdot\br{u_{\eps}\br{s}}^{\lambda}\ds}.
	\end{equation}
	
	Moreover, it is easy to show that \eqref{eq_04_01} implies $u\br{a}=0$ (e.g. by similar arguments as in \eqref{eq_03_07}). Thus, for every $\eps>0$ it holds that
	\begin{equation*}
		u\br{a}=0<\eps=u_{\eps}\br{a}.
	\end{equation*}
	Since both $u\br{\cdot}$ and $u_{\eps}\br{\cdot}$ are continuous, there exists some $\eta>0$ such that for all $t\in\sbr{a,a+\eta}$ it holds that
	\begin{equation}\label{eq_04_04}
		u\br{t}<u_{\eps}\br{t}.
	\end{equation}
	We will show that \eqref{eq_04_04} holds for all $t\in\sbr{a,b}$. Assume that this is not the case, i.e. assume that there exists some $\tau\in\sbr{a,b}$ such that ${u}\br{\tau}\geq{u}_{\eps}\br{\tau}$. Define
	\begin{equation}\label{eq_04_05}
		t_0\defeq\min\cbr{t\in\sbr{a,b}\st{u}\br{t}=u_{\eps}\br{t}}.
	\end{equation}
	Note that due to continuity of $u\br{\cdot}$ and $u_{\eps}\br{\cdot}$, the minimum in \eqref{eq_04_05} exists. Thus, for all $t\in\sbr{a,t_0}$ it holds that
	\begin{equation}\label{eq_04_06}
		u\br{t}\leq{u}_{\eps}\br{t}.
	\end{equation}
	From \eqref{eq_04_06} and \eqref{eq_04_01}, for every $\eps>0$ it follows that
	\begin{equation*}
		\begin{aligned}
			u\br{t_0}&\leq\frac{H}{\Gamma\br{\alpha}}\cdot\Lint{a}{t_0}{\br{t_0-s}^{\alpha-1}\br{u_{\eps}\br{s}}^{\lambda}\ds}\\
			&<\eps+\frac{H}{\Gamma\br{\alpha}}\cdot\Lint{a}{t_0}{\br{t_0-s}^{\alpha-1}\br{u_{\eps}\br{s}}^{\lambda}\ds}\eqText{\eqref{eq_04_03}}u_{\eps}\br{t_0},
		\end{aligned}
	\end{equation*}
	which is a contradiction with \eqref{eq_04_05}. Thus, \eqref{eq_04_04} holds for all $t\in\sbr{a,b}$.
	
	Finally, note that for $\eps=0$, \eqref{eq_04_02} has the unique solution $\su\equiv0$ on $\sbr{a,b}$, due to Theorem $6.5$ in \cite{Diethelm}. Furthermore, by the proof of Theorem $6.20$ in \cite{Diethelm}, for every $\eps>0$ it holds that
	\begin{equation}\label{eq_04_07}
		\sup_{t\in\sbr{a,b}}\abs{u_{\eps}\br{t}-\su\br{t}}=\sup_{t\in\sbr{a,b}}\abs{u_{\eps}\br{t}}\leq{A}\cdot\eps,
	\end{equation}
	for some constant $A>0$. Since \eqref{eq_04_07} holds for every $\eps>0$, we get from \eqref{eq_04_04} holding on $\sbr{a,b}$ and the fact that $u\br{\cdot}$ is a non-negative continuous solution of \eqref{eq_04_01} that $u\equiv0$ on $\sbr{a,b}$.
	\end{proof}

	\begin{remark}\label{rem_16}
		Note that $w_{\alpha}\br{t,s}=h\br{t}\cdot{s}^{\lambda}$, with $h\br{\cdot}$ being continuous on $\sbr{a,b}$ and $\lambda\geq1$, is a Kamke function of order $\alpha$ as well. Indeed, using the boundedness of $h\br{\cdot}$ on $\sbr{a,b}$, \axiomRef{def_08_ax_03} of \definitionRef{def_08} is obtained from \theoremRef{thm_16}, the other axioms are obvious.
	\end{remark}
	
	\subsection{Existence of solutions in Banach sequence spaces}\label{sec_04_02}
	Now, we provide some applications of \theoremRef{thm_14} in particular Banach spaces, namely in the sequence space $c_0$. Before doing so, we encourage the reader to remind themselves the notation from the beginning of \sectionRef{sec_03}. 
	
	Firstly, we rewrite \eqref{eq_03_01} in the form of a fractional differential equation in a sequence space as
	\begin{subequations}\label{eq_04_08}
		\begin{align}
			\DC{\alpha}{a}{\bu\br{t}}&=\mathbf{f}\br{t,\bu\br{t}},\quad{t}\in\sI,\\
			\bu\br{a}&=\bvarphi,
		\end{align}
	\end{subequations}
	where $\bu\br{\cdot}\defeq\br{u_n\br{\cdot}}_{n=1}^{\infty},\mathbf{f}\br{\cdot,\bu}\defeq\br{f_n\br{\cdot,\bu}}_{n=1}^{\infty}$ and $\bvarphi=\br{\varphi_n}_{n=1}^{\infty}$.
	
	\begin{theorem}\label{thm_17}
		Consider the Banach sequence space $E=c_0$. Consider the IVP \eqref{eq_04_08}. Let  $p_j,q_j\st\sI\rightarrow\R$ be continuous and non-negative for all $j=1,2,\dots$, $\br{k_j}_{j=1}^{\infty}\subseteq\N$ be an increasing sequence such that $\lim_{j\rightarrow\infty}k_j=\infty$. Let $\lambda\geq1$ and assume that
		\begin{enumerate}[(i)]
			\item $\mathbf{f}\st{R}\rightarrow{c}_0$ is uniformly continuous on $R$.\label{th_04_02_ass_01}
			\item $\abs{f_j\br{t,\bu}}\leq{p}_j\br{t}+q_j\br{t}\cdot\sup_{i\geq{k}_j}\abs{u_i}^{\lambda}$, for all $\br{t,\bu}\in{R}$, for all $j=1,2,\dots$\label{th_04_02_ass_02}
			\item $p_j\br{t}\xrightarrow{j\rightarrow\infty}0$, for all $t\in\sI$.\label{th_04_02_ass_03}
			\item The family $\cbr{p_j\br{\cdot}}_{j=1}^{\infty}$ is uniformly bounded on $\sI$ by some $P\geq0$.
			\item The family $\cbr{q_j\br{\cdot}}_{j=1}^{\infty}$ is uniformly bounded on $\sI$ by some $Q\geq0$.\label{th_04_02_ass_05}
		\end{enumerate}
		Then there exists a solution of \eqref{eq_04_08} on $I=\sbr{a,a+\delta}$, with $\delta$ as defined in \eqref{eq_03_02} and $M$ given by
		\begin{equation}\label{eq_04_09}
			M\defeq{P}+Q\cdot\br{\norm{\bvarphi}_{c_0}+\beta}^{\lambda}.
		\end{equation}
	\end{theorem}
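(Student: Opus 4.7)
The plan is to apply \theoremRef{thm_14} in the space $E=c_0$ with $\mu$ chosen as the Hausdorff measure of non-compactness $\chi$ on $c_0$ and with Kamke function
\begin{equation*}
w_{\alpha}\br{t,s}\defeq{Q}\cdot{s}^{\lambda}.
\end{equation*}
By \remarkRef{rem_10}, $\chi$ is a sublinear measure of non-compactness enjoying the singleton property; and, for $Q>0$, \theoremRef{thm_16} ensures that $w_{\alpha}$ is a Kamke function of order $\alpha$ on $\sI$ (the degenerate case $Q=0$ reduces $w_{\alpha}$ to the zero function, which is trivially admissible). It then remains only to verify the three hypotheses of \theoremRef{thm_14} with the $M$ prescribed by \eqref{eq_04_09}.

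Assumption (i) of \theoremRef{thm_14} is \assumRef{th_04_02_ass_01} verbatim. For (ii), if $\bu\in\ball{c_0}{\bvarphi}{\beta}$ then $\norm{\bu}_{c_0}\leq\norm{\bvarphi}_{c_0}+\beta$, and \assumRef{th_04_02_ass_02} together with the uniform bounds $p_j\leq{P}$, $q_j\leq{Q}$ gives
\begin{equation*}
\norm{\mathbf{f}\br{t,\bu}}_{c_0}=\sup_{j}\abs{f_j\br{t,\bu}}\leq{P}+Q\cdot\br{\norm{\bvarphi}_{c_0}+\beta}^{\lambda}={M},
\end{equation*}
which is precisely \eqref{eq_04_09}.

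The main content is assumption (iii) of \theoremRef{thm_14}. Fix $\emptyset\neq{X}\subseteq\ball{c_0}{\bvarphi}{\beta}$ and $t\in\sI$. By \theoremRef{thm_13},
\begin{equation*}
\chi\br{\mathbf{f}\br{t,X}}=\lim_{k\rightarrow\infty}\sup_{\bu\in{X}}\sup_{l\geq{k}}\abs{f_l\br{t,\bu}}.
\end{equation*}
From \assumRef{th_04_02_ass_02} and the subadditivity of $\sup$, for every $\bu$ and $k$,
\begin{equation*}
\sup_{l\geq{k}}\abs{f_l\br{t,\bu}}\leq\sup_{l\geq{k}}p_l\br{t}+\sup_{l\geq{k}}\br{q_l\br{t}\sup_{i\geq{k}_l}\abs{u_i}^{\lambda}}.
\end{equation*}
Because $\br{k_j}$ is increasing, the sets $\cbr{i\st{i}\geq{k}_l}$ shrink as $l$ grows, so $\sup_{l\geq{k}}\sup_{i\geq{k}_l}\abs{u_i}^{\lambda}=\sup_{i\geq{k}_k}\abs{u_i}^{\lambda}$; combined with $q_l\leq{Q}$, this majorises the second summand by $Q\cdot\sup_{i\geq{k}_k}\abs{u_i}^{\lambda}$. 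Now I take $\sup$ over $\bu\in{X}$ (which commutes with the $\sup$ over $l$ and $i$) and pass to the limit $k\rightarrow\infty$: the first summand vanishes because \assumRef{th_04_02_ass_03} forces $\sup_{l\geq{k}}p_l\br{t}\rightarrow0$ pointwise in $t$, while the second, using $k_k\rightarrow\infty$ and the identity $\sup{x}^{\lambda}=\br{\sup{x}}^{\lambda}$ valid for non-negative $x$ and $\lambda\geq1$, converges to $Q\cdot\br{\chi\br{X}}^{\lambda}=w_{\alpha}\br{t,\chi\br{X}}$.

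The principal technical obstacle is precisely the bookkeeping in (iii): correctly interchanging and bounding the nested suprema over $\bu$, $l$ and $i$, while exploiting the monotonicity of $\br{k_j}$ and the pointwise decay of $p_j$ to collapse the resulting expression into one involving only $\chi\br{X}$. Once this estimate is secured, \theoremRef{thm_14} applies directly and furnishes the desired solution of \eqref{eq_04_08} on $I$.
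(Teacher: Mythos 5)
Your proposal is correct and follows essentially the same route as the paper: apply \theoremRef{thm_14} with the Hausdorff measure $\chi$ on $c_0$ (via \theoremRef{thm_13} and \remarkRef{rem_10}) and the Kamke function $w_{\alpha}\br{t,s}=Q\cdot s^{\lambda}$ from \theoremRef{thm_16}, reducing everything to the estimate $\chi\br{\mathbf{f}\br{t,X}}\leq Q\cdot\br{\chi\br{X}}^{\lambda}$ obtained by collapsing the nested suprema using the monotonicity of $\br{k_j}$ and the pointwise decay of $p_j$. Your handling of the suprema is in fact slightly more explicit than the paper's (which passes from $\sup_{j\geq n}\sup_{i\geq k_j}$ to $\sup_{j\geq n}\abs{u_j}$ in one step), and your aside on the degenerate case $Q=0$ is a harmless extra.
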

	\begin{proof}
	Consider the Kamke function $w_{\alpha}\br{t,s}\defeq{Q}\cdot{s}^{\lambda}$ (see \theoremRef{thm_16}) and the Hausdorff measure of non-compactness $\chi\br{\cdot}$ from \theoremRef{thm_13}. Obviously, it suffices to verify \assumptionRef{th_03_01_ass_02} and \assumptionRef{th_03_01_ass_03} from \theoremRef{thm_14}.
	
	The second assumption from \theoremRef{thm_14} is a direct consequence of \assumptionRef{th_04_02_ass_02} from \theoremRef{thm_17}, \eqref{eq_04_09} and the fact that $\bu\in\ball{c_0}{\bvarphi}{\beta}$, for all $\br{t,\bu}\in{R}$.
	
	As for the third assumption from \theoremRef{thm_14}, let $t\in\sI$ and $\emptyset\neq{X}\subseteq\ball{c_0}{\bvarphi}{\beta}$ be arbitrary. Then
	\begin{equation*}
		\begin{aligned}
			&\chi\br{f\br{t,X}}\leqText{\eqref{eq_02_21}\,\&\,\assumRef{th_04_02_ass_02}}\lim_{n\rightarrow\infty}\sup_{\bu\in{X}}\sup_{j\geq{n}}\br{p_j\br{t}+q_j\br{t}\cdot\sup_{i\geq{k}_j}\abs{u_i}^{\lambda}}\\
			&\qquad\leqText{\assumRef{th_04_02_ass_05}}\lim_{n\rightarrow\infty}\sup_{j\geq{n}}\br{p_j\br{t}}+Q\cdot\lim_{n\rightarrow\infty}\sup_{\bu\in{X}}\sup_{j\geq{n}}\sup_{i\geq{k}_j}\abs{u_i}^{\lambda}\\
			&\qquad=\limsup_{n\rightarrow\infty}\br{p_n\br{t}}+Q\cdot\br{\lim_{n\rightarrow\infty}\sup_{\bu\in{X}}\sup_{j\geq{n}}\abs{u_j}}^{\lambda}\eqText{\assumRef{th_04_02_ass_03}\,\&\,\eqref{eq_02_21}}Q\cdot\br{\chi\br{X}}^{\lambda}.
		\end{aligned}
	\end{equation*}
	Thus, applying \theoremRef{thm_14}, we obtain the desired result.
	\end{proof}
	
	\subsection{Semi-discretisation of fractional PDEs}\label{sec_04_03}
	For $u\st\sbr{a,b}\times\R\rightarrow\R$, let $\DCp{\alpha}{a}{u}{t}\br{t_0,s_0}$ denote the Caputo fractional partial derivative of order $\alpha$ of $u$ with respect to $t$ at the point $\br{t_0,s_0}$, i.e. $\DCp{\alpha}{a}{u}{t}\br{t_0,s_0}\defeq\DC{\alpha}{a}{\br{u\br{\cdot,s_0}}}\br{t_0}$.
	
	For $p\geq2$, define $\Phi_p\st\R\rightarrow\R$ as
	\begin{equation*}
		\Phi_p\br{x}\defeq\abs{x}^{p-2}x=\abs{x}^{p-1}\sgn{x},\quad{x}\in\R,
	\end{equation*}
	where $\sgn{\cdot}$ denotes the \textit{signum} function. Note that for $p=2$, $\Phi_p$ is the identity function.
	
	Let $T>0$ and define $\sI\defeq\sbr{0,T}$. Let $F\st\sI\times\lsrbr{0,\infty}\rightarrow\R$, $r\st\sI\times\lsrbr{0,\infty}\rightarrow\R$, $\psi\st\sI\rightarrow\R$ and $\varphi\st\lsrbr{0,\infty}\rightarrow\R$. Let $p\geq2$. Consider the following problem
	\begin{subequations}\label{eq_04_10}
		\begin{align}
			\DCp{\alpha}{0}{u}{t}\br{t,x}&=\diffp{}{x}\br{r\cdot\Phi_p\br{\diffp{u}{x}}}\br{t,x}+F\br{t,x},\quad{t}\in\sI,x\in\lsrbr{0,\infty},\label{eq_04_10_01}\\
			u\br{t,0}&=\psi\br{t},\quad{t}\in\sI,\\
			u\br{0,x}&=\varphi\br{x},\quad{x}\in\lsrbr{0,\infty}.
		\end{align}
	\end{subequations}
	Note that the problem \eqref{eq_04_10} consists of a fractional PDE with $p$-Laplacian, together with boundary and initial condition, respectively. Also, note that for $r\equiv1$ and $p=2$, the first term on the RHS of \eqref{eq_04_10_01} reduces to $\diffp[2]{u}{x}$.
	
	Now, we perform semi-discretisation of \eqref{eq_04_10} in the $x$-coordinate. Note that for simplicity, we consider unit step. For $n=0,1,\dots$, define $u_n\br{\cdot}\defeq{u}\br{\cdot,n}$, $\varphi_n\defeq\varphi\br{n}$, $F_n\br{\cdot}\defeq{F}\br{\cdot,n}$ and $r_{n+\frac{1}{2}}\br{\cdot}\defeq{r}\br{\cdot,n+\frac{1}{2}}$.
	
	For $n=1,2,\dots$, consider the central difference approximation of the first term on the RHS of \eqref{eq_04_10_01}
	\begin{equation}\label{eq_04_11}
		\begin{aligned}
			&\diffp{}{x}\br{r\cdot\Phi_p\br{\diffp{u}{x}}}\br{t,n}\\
			&\qquad\approx{r}_{n+\frac{1}{2}}\br{t}\cdot\Phi_p\br{u_{n+1}\br{t}-u_n\br{t}}-r_{n-\frac{1}{2}}\br{t}\cdot\Phi_p\br{u_n\br{t}-u_{n-1}\br{t}}\\
			&\qquad\eqdef\Lambda_n^p\br{t,\br{u_j\br{t}}_{j=1}^{\infty}}.
		\end{aligned}
	\end{equation}
	Using \eqref{eq_04_11}, we obtain the following semi-discrete approximation of \eqref{eq_04_10}
	\begin{subequations}\label{eq_04_12}
		\begin{align}
			\DC{\alpha}{0}{u_n}\br{t}&=\Lambda_n^p\br{t,\br{u_j\br{t}}_{j=1}^{\infty}}+F_n\br{t}\eqdef{f}_n^p\br{t,\br{u_j\br{t}}_{j=1}^{\infty}},\quad{t}\in\sI,\,n=1,2,\dots,\label{eq_04_12_01}\\
			u_0\br{t}&=\psi\br{t},\quad{t}\in\sI,\label{eq_04_12_02}\\
			u_n\br{0}&=\varphi_n,\quad{n}=1,2,\dots
		\end{align}
	\end{subequations}
	
	Considering $E$ to be an appropriate Banach sequence space, \eqref{eq_04_12} can be viewed as a special case of \eqref{eq_04_08}, namely with $a=0,\sdelta=T$ (and $\sI=\sbr{0,T}$), in the form
	\begin{subequations}\label{eq_04_13}
		\begin{align}
			\DC{\alpha}{0}{\bu}\br{t}&=\bLambda^p\br{t,\bu\br{t}}+\bF\br{t}\eqdef\mathbf{f}^p\br{t,\bu\br{t}},\quad{t}\in\sI,\\
			\bu\br{0}&=\bvarphi,
		\end{align}
	\end{subequations}
	where\,\, $\bu\br{\cdot}\defeq\br{u_n\br{\cdot}}_{n=1}^{\infty}$,\,\,\, $\bLambda^p\br{\cdot,\bu}\defeq\br{\Lambda_n^p\br{\cdot,\bu}}_{n=1}^{\infty}$,\,\,\, $\bF\br{\cdot}\defeq\br{F_n\br{\cdot}}_{n=1}^{\infty}$\,\, and $\bvarphi\defeq\br{\varphi_n}_{n=1}^{\infty}$. Namely, note that $\mathbf{f}^p\br{\cdot,\bu}=\br{f_n^p\br{\cdot,\bu}}_{n=1}^{\infty}=\br{\Lambda_n^p\br{\cdot,\bu}+F_n\br{\cdot}}_{n=1}^{\infty}$. Furthermore, note that in $\Lambda_1^p\br{\cdot,\bu\br{\cdot}}$, the term $u_0\br{\cdot}$ occurs, which is not part of $\bu\br{\cdot}=\br{u_n\br{\cdot}}_{n=1}^{\infty}$, but is rather given by \eqref{eq_04_12_02} (i.e. it can be viewed as a constant term with respect to $f_1^p\br{\cdot,\bu}$). Finally, note that $u_0$ has different meaning in \eqref{eq_03_01_02}, as it does in \eqref{eq_04_12_02}, but that should be clear from the context.
	
	Before moving on with particular example in the space $c_0$, we encourage the reader once again to remind themselves the notation used in the IVP \eqref{eq_03_01}, its reformulation \eqref{eq_04_08}, the problem \eqref{eq_04_10}, as well as in \theoremRef{thm_14}.
	
	\begin{example}\label{ex_01}
		Consider the IVP \eqref{eq_04_13} in the Banach space $E=c_0$. Assume that
		\begin{enumerate}[(i)]
			\item $\varphi\br{\cdot}$ is continuous on $\lsrbr{0,\infty}$, such that $\varphi\br{x}\xrightarrow{x\rightarrow\infty}0$.
			\item $\psi\equiv0$ on $\sI$.
			\item $r\br{\cdot,\cdot}$ is uniformly continuous and bounded on $\sI\times\lsrbr{0,\infty}$.\label{ex_01_ass_03}
			\item $F\br{\cdot,\cdot}$ is uniformly continuous and bounded on $\sI\times\lsrbr{0,\infty}$, such that for all $t\in\sI$, it holds that $F\br{t,x}\xrightarrow{x\rightarrow\infty}0$.
			\label{th_04_03_ass_04}
		\end{enumerate}
		We will show, using \theoremRef{thm_17}, that under these assumptions, \eqref{eq_04_13} has a local solution.
		
		Firstly, note that the problem \eqref{eq_04_13} is well formulated, since it can be easily shown that under given assumptions, $\mathbf{f}^p$ maps $R$ into $c_0$ and the initial value is an element of $c_0$ (i.e. $\bvarphi\in{c}_0$).
		
		Note that it is easy to verify, using the convexity of $\abs{\cdot}^{p-1}$ (recall that $p\geq2$) and Jensen's inequality, that for all $x,y\in\R$ and any $z\geq\max\cbr{\abs{x},\abs{y}}$, it holds that
		\begin{equation}\label{eq_04_14}
			\abs{\Phi_p\br{x-y}}\leq\br{\abs{x}+\abs{y}}^{p-1}\leq2^{p-2}\cdot\br{\abs{x}^{p-1}+\abs{y}^{p-1}}\leq2^{p-1}\cdot{z}^{p-1}.
		\end{equation}
		
		Now, we verify the \assumptionRef{th_04_02_ass_01} of \theoremRef{thm_17}. Let $\eps>0$ be arbitrary. Define the following constants
		\begin{align}
			C_1&\defeq\sup_{\br{t,x}\in\sI\times\lsrbr{0,\infty}}\abs{r\br{t,x}}\cdot2^p\br{p-1}\cdot\br{\norm{\bvarphi}_{c_0}+\beta}^{p-2},\label{eq_04_15}\\
			C_2&\defeq2^p\cdot\br{\norm{\bvarphi}_{c_0}+\beta}^{p-1}.\label{eq_04_16}
		\end{align}
		Let $t,s\in\sI$ and $\bu,\bv\in\ball{c_0}{\bvarphi}{\beta}$ be arbitrary. Note that due to uniform continuity of $r\br{\cdot,\cdot}$ on $\sI\times\lsrbr{0,\infty}$, there exists $\delta_2>0$ such that whenever $\abs{t-s}\leq\delta_2$, then $\abs{r\br{t,x}-r\br{s,x}}\leq\frac{\eps}{3C_2}$, for all $x\geq0$. Similarly, by the same argument, there exists $\delta_3>0$ such that whenever $\abs{t-s}\leq\delta_3$, then $\abs{F\br{t,x}-F\br{s,x}}\leq\frac{\eps}{3}$, for all $x\geq0$. Finally, let $\delta_1\defeq\frac{\eps}{3C_1}$. Then, whenever $\max\cbr{\abs{t-s},\norm{\bu-\bv}_{c_0}}\leq\min\cbr{\delta_1,\delta_2,\delta_3}$, we obtain
		\begin{equation}\label{eq_04_17}
			\begin{aligned}
				&\norm{\mathbf{f}^p\br{t,\bu}-\mathbf{f}^p\br{s,\bv}}_{c_0}\leq\sup_{n=1,2,\dots}\abs{\Lambda^p_n\br{t,\bu}-\Lambda^p_n\br{s,\bv}}+\sup_{n=1,2,\dots}\abs{F_n\br{t}-F_n\br{s}}\\
				&\qquad\leq2\cdot\sup_{n=1,2,\dots}\abs{r_{n-\frac{1}{2}}\br{t}\cdot\Phi_p\br{u_n-u_{n-1}}-r_{n-\frac{1}{2}}\br{s}\cdot\Phi_p\br{v_n-v_{n-1}}}+\\
				&\qquad\qquad\qquad+\sup_{n=1,2,\dots}\abs{F_n\br{t}-F_n\br{s}}\\
				&\qquad\leq2\cdot\sup_{n=1,2,\dots}\abs{r_{n-\frac{1}{2}}\br{t}\cdot\br{\Phi_p\br{u_n-u_{n-1}}-\Phi_p\br{v_n-v_{n-1}}}}+\\
				&\qquad\qquad\qquad+2\cdot\frac{\eps}{3C_2}\cdot\sup_{n=1,2,\dots}\abs{\Phi_p\br{v_{n}-v_{n-1}}}+\sup_{n=1,2,\dots}\abs{F_n\br{t}-F_n\br{s}}.
			\end{aligned}
		\end{equation}
		The third term on the RHS of \eqref{eq_04_17} is apparently \enquote{$\leq\frac{\eps}{3}$}. As for the second term on the RHS of \eqref{eq_04_17}, it holds that
		\begin{equation*}
			\frac{2\eps}{3C_2}\cdot\sup_{n=1,2,\dots}\abs{\Phi_p\br{v_{n}-v_{n-1}}}\leqText{\bv\in\ball{c_0}{\bvarphi}{\beta}\,\&\,\eqref{eq_04_14}}\frac{2\eps}{3C_2}\cdot2^{p-1}\cdot\br{\norm{\bvarphi}_{c_0}+\beta}^{p-1}\eqText{\eqref{eq_04_16}}\frac{\eps}{3}.
		\end{equation*}
		Finally, let $n=1,2,\dots$ be arbitrary. Using the mean value theorem, we obtain the following estimate for the first term on the RHS of \eqref{eq_04_17}
		\begin{equation*}
			\begin{aligned}
				&2\cdot\abs{r_{n-\frac{1}{2}}\br{t}\cdot\br{\Phi_p\br{u_n-u_{n-1}}-\Phi_p\br{v_n-v_{n-1}}}}\\
				&\qquad\eqText{\xi\text{ is between }\br{u_n-u_{n-1}}\text{ and }\br{v_n-v_{n-1}}}2\cdot\abs{r_{n-\frac{1}{2}}\br{t}}\cdot\br{p-1}\cdot\abs{\xi}^{p-2}\times\\
				&\qquad\qquad\qquad\times\abs{\br{u_n-u_{n-1}}-\br{v_n-v_{n-1}}}\\
				&\qquad\leqText{\bu,\bv\in\ball{c_0}{\bvarphi}{\beta}}2\cdot\abs{r_{n-\frac{1}{2}}\br{t}}\cdot\br{p-1}\cdot\br{2\br{\norm{\bvarphi}_{c_0}+\beta}}^{p-2}\cdot2\norm{\bu-\bv}_{c_0}\\
				&\qquad\leqText{\eqref{eq_04_15}}C_1\cdot\norm{\bu-\bv}_{c_0},
			\end{aligned}
		\end{equation*}
		which, considering that $\norm{\bu-\bv}_{c_0}\leq\delta_1=\frac{\eps}{3C_1}$, yields that the first term on the RHS of \eqref{eq_04_17} is \enquote{$\leq\frac{\eps}{3}$}. Hence, $\mathbf{f}^p$ is uniformly continuous on $R$.
		
		Now, defining $\lambda\defeq{p}-1$, $p_n\br{\cdot}\defeq\abs{F_n\br{\cdot}}$, $q_n\br{\cdot}\defeq2^p\cdot\max\cbr{r_{n-\frac{1}{2}}\br{\cdot},r_{n+\frac{1}{2}}\br{\cdot}}$, for $n=1,2,\dots$ and $k_1\defeq1$, $k_n\defeq{n-1}$, for $n=2,3,\dots$, one can easily see that \assumptionRef{th_04_02_ass_02} of \theoremRef{thm_17} is satisfied. Indeed, for $n=1,2,\dots$ it holds that
		\begin{equation*}
			\begin{aligned}
				\abs{f_n^p\br{t,\bu}}&\leqText{\eqref{eq_04_11}}\abs{r_{n+\frac{1}{2}}\br{t}}\cdot\abs{u_{n+1}-u_n}^{p-1}+\abs{r_{n-\frac{1}{2}}\br{t}}\cdot\abs{u_n-u_{n-1}}^{p-1}+\abs{F_n\br{t}}\\
				&\leqText{\eqref{eq_04_14}}2^p\cdot\max\cbr{r_{n-\frac{1}{2}}\br{t},r_{n+\frac{1}{2}}\br{t}}\cdot\max_{k=n-1,n,n+1}\abs{u_k}^{p-1}+\abs{F_n\br{t}}.
			\end{aligned}
		\end{equation*}
		
		The last three assumptions of \theoremRef{thm_17} are obviously satisfied (due to \assumptionRef{ex_01_ass_03} and \assumptionRef{th_04_03_ass_04} from \exampleRef{ex_01}). Thus, by \theoremRef{thm_17}, there exists a local solution of \eqref{eq_04_13}.
	\end{example}

	\section{Concluding remarks}\label{sec_05}
	\setcounter{section}{5}
	\setcounter{equation}{0}
	Most of the results that can be found in the existing literature and concerning the question of existence of solutions for problems of type \eqref{eq_03_01} require the assumption of $\mu\br{f\br{t,X}}\leq{g}\br{t}\cdot\mu\br{X}$, where $g\br{\cdot}$ is continuous and $\mu\br{\cdot}$ is either Hausdorff or Kuratowski measure of non-compactness. Our main result, \theoremRef{thm_14}, extends this assumption in two ways. Firstly, we require only $\mu\br{f\br{t,X}}\leq{w}_{\alpha}\br{t,\mu\br{X}}$, with $w_{\alpha}\br{\cdot,\cdot}$ being an arbitrary Kamke function of order $\alpha$ (see \definitionRef{def_08}). Secondly, we allow $\mu\br{\cdot}$ to be an arbitrary measure of non-compactness satisfying some additional assumptions. Note that the above-mentioned $g\br{t}\cdot\mu\br{X}$ satisfies the definition of a Kamke function of order $\alpha$ (see \theoremRef{thm_16} and \remarkRef{rem_16}). Also, note that both the Hausdorff and Kuratowski measure of non-compactness satisfy given additional assumptions (the former case being discussed in \remarkRef{rem_10}, the latter not mentioned in this paper, but one can easily find the necessary theory in \cite{Banas_book_sequences}).
	
	In \theoremRef{thm_14}, in contrast to a similar theorem for the integer order case in \cite{Banas_book} (Theorem $13.3.1$), we require the assumption of $\mu$ having the singleton property (see \definitionRef{def_06}). This is due to the fact that in \eqref{eq_03_01}, the initial condition is in a general point $u_0\in{E}$, rather than in $u_0=0$. Moreover, as mentioned in \remarkRef{rem_09}, this property does not follow from the definition of a (sublinear) measure of non-compactness in general, thus it indeed cannot be omitted.
	
	Note that even though the theory was developed for fractional differential equations of order $0<\alpha<1$, the very same arguments can be used also in the case of $\alpha=1$ (which coincides with the usual first derivative). However, note that for $\alpha=1$, \theoremRef{thm_14} is a known result (see the corresponding discussion in \sectionRef{sec_01}).
	
	There are various options for further development of our main result, \theoremRef{thm_14}. Natural question is whether some assumptions of \theoremRef{thm_14} can be weakened, at least for particular measures of non-compactness and/or particular Banach spaces. This question was discussed for the integer order case in \cite{Monch}, where it is shown that the mere continuity of $f$ (instead of the uniform continuity) is sufficient in the case of a Hausdorff measure of non-compactness for some Banach spaces. Furthermore, the question of extension of local solutions to the whole interval $\sbr{a,a+\sdelta}$ can be addressed. Also, an interesting question is a generalisation of the IVP \eqref{eq_03_01} in the context of calculus on time scales (even in the case of $\alpha=1$). Finally, the IVP \eqref{eq_03_01} can be also extended to fractional differential equations of arbitrary order $\alpha>0$.
	
	The application of \theoremRef{thm_14} in the Banach space $c_0$, namely \theoremRef{thm_17}, was inspired by \cite{Banas_paper_SolvabilityOfInifiniteSystems}, where a similar result is stated for the integer order case and $\lambda=1$. However, we believe that in \cite{Banas_paper_SolvabilityOfInifiniteSystems} (Theorem $3$), the assumption of $\cbr{p_j\br{\cdot}}_{j=1}^{\infty}$ converging uniformly to $0$ on $\sI$ can be replaced by the weaker assumption of pointwise convergence (which is only assumed in \theoremRef{thm_17}).
	
	Study of countable systems of the form \eqref{eq_04_13} (with semi-discrete $p$-Laplacian) is new in the fractional case even in the case of $p=2$ (which coincides with semi-discretisation of the usual Laplacian). Also, in the view of previous note regarding $\alpha=1$, the study of such non-fractional countable systems (i.e. for $\alpha=1$) is new in the case of $p>2$.
	
	Additionally, one can further modify the results from \sectionRef{sec_04_02} to other sequence spaces, e.g. $c$ (the space of all the convergent sequences) or $l_p$ (the space of all the absolutely summable sequences in the $p$-th power). Such systems were studied for the integer order case e.g. in \cite{Banas_paper_SolvabilityOfInifiniteSystems}.
	
	Furthermore, the fractional PDE with $p$-Laplacian in \eqref{eq_04_10_01} can be generalised to the case where $x\in\R$ (instead of $x\geq0$). Semi-discretisation of such a PDE would lead to bisequences (i.e. sequences indexed over $\Z$ instead of $\N$). Also, note that in \eqref{eq_04_10_01}, the non-linear perturbation $F\br{t,x}$ can be without major problems extended to the form $F\br{t,x,\diffp{u}{x}}$, given that the part containing $\diffp{u}{x}$ \enquote{behaves nicely}.
	
	Finally, in \exampleRef{ex_01}, the assumption of $\psi\equiv0$ can be easily extended to the case of $\psi=\psi\br{\cdot}$ being continuous on $\sI$. In such a case, the term $u_0\br{\cdot}=\psi\br{\cdot}$ from \eqref{eq_04_12_02} would appear as a (known) part of the non-linearity $f_1^p\br{\cdot,\bu}$ in \eqref{eq_04_12_01} (see also the discussion right after \eqref{eq_04_13}). One should note that in the case of continuous $\psi\br{\cdot}\not\equiv0$, the conclusion of \exampleRef{ex_01} remains the same, with only minor modifications of the arguments given in that example.

\end{document}